\documentclass[a4paper,11pt]{article}
\usepackage{amsmath,amsthm,amssymb}
\usepackage[mathscr]{eucal}
\usepackage[bookmarks=false]{hyperref}

\setlength{\oddsidemargin}{0pt}
\setlength{\topmargin}{5pt}
\setlength{\textheight}{650pt}
\setlength{\textwidth}{470pt}
\setlength{\headsep}{10pt}
\setlength{\parindent}{0pt}
\setlength{\parskip}{1ex plus 0.5ex minus 0.2ex}

\numberwithin{equation}{section}

{\theoremstyle{definition}\newtheorem{definition}{Definition}[section]

\newtheorem{remark}[definition]{Remark}
}

\newtheorem{proposition}[definition]{Proposition}
\newtheorem{lemma}[definition]{Lemma}
\newtheorem{theorem}[definition]{Theorem}

\newenvironment{somop}{\begin{list}{$\bullet$}{\settowidth{\labelwidth}{$\bullet$}\settowidth{\leftmargin}{$\bullet$}\setlength{\rightmargin}{0cm}\setlength{\labelsep}{0.3cm}\addtolength{\leftmargin}{\labelsep}}}{\end{list}}

\newcommand{\M}{\operatorname{M}}
\newcommand{\C}{\mathbb{C}}

\newcommand{\F}{\mathbb{F}}
\newcommand{\cR}{\mathcal{R}}

\newcommand{\actson}{\curvearrowright}
\newcommand{\SL}{\operatorname{SL}}
\newcommand{\rL}{\mathord{\text{\rm L}}}

\newcommand{\N}{\mathbb{N}}

\newcommand{\Z}{\mathbb{Z}}
\newcommand{\cF}{\mathcal{F}}

\newcommand{\cV}{\mathcal{V}}
\newcommand{\id}{\mathord{\operatorname{id}}}
\newcommand{\si}{\sigma}
\newcommand{\cE}{\mathcal{E}}
\newcommand{\recht}{\rightarrow}
\newcommand{\cU}{\mathcal{U}}
\newcommand{\vphi}{\varphi}
\newcommand{\cW}{\mathcal{W}}
\newcommand{\R}{\mathbb{R}}
\newcommand{\al}{\alpha}
\newcommand{\eps}{\varepsilon}

\newcommand{\Tr}{\operatorname{Tr}}
\newcommand{\ovt}{\mathbin{\overline{\otimes}}}
\newcommand{\B}{\operatorname{B}}
\newcommand{\om}{\omega}
\newcommand{\cP}{\mathcal{P}}
\newcommand{\cZ}{\mathcal{Z}}

\newcommand{\cK}{\mathcal{K}}

\newcommand{\cH}{\mathcal{H}}

\newcommand{\ot}{\otimes}

\newcommand{\cG}{\mathcal{G}}
\newcommand{\cM}{\mathcal{M}}

\newcommand{\be}{\beta}

\newcommand{\Mtil}{\widetilde{M}}

\newcommand{\D}{\operatorname{D}}

\newcommand{\cN}{\mathcal{N}}

\newcommand{\cS}{\mathcal{S}}

\newcommand{\cO}{\mathcal{O}}
\newcommand{\cC}{\mathcal{C}}
\newcommand{\cD}{\mathcal{D}}
\newcommand{\TC}{\mathcal{TC}}
\newcommand{\SO}{\operatorname{SO}}
\newcommand{\SU}{\operatorname{SU}}
\newcommand{\ball}{\operatorname{Ball}}
\newcommand{\HNN}{\operatorname{HNN}}
\newcommand{\ox}{\overline{x}}
\newcommand{\oy}{\overline{y}}
\newcommand{\oz}{\overline{z}}
\newcommand{\cT}{\mathcal{T}}

\newcommand{\bim}[3]{\mathord{\raisebox{-0.4ex}[0ex][0ex]{\scriptsize $#1$}{#2}\hspace{-0.2ex}\raisebox{-0.4ex}[0ex][0ex]{\scriptsize $#3$}}}
\newcommand{\almost}{\subset_{\text{\rm approx}}}
\newcommand{\two}{$\|\,\cdot\,\|_2$-norm\ }

\renewcommand{\Re}{\operatorname{Re}}

\begin{document}

\begin{center}
{\LARGE\bf\boldmath One-cohomology and the uniqueness of the group \vspace{0.5ex}\\ measure space decomposition of a II$_1$ factor}

\bigskip

{\sc by Stefaan Vaes\footnote{Partially
    supported by ERC Starting Grant VNALG-200749, Research
    Programme G.0639.11 of the Research Foundation --
    Flanders (FWO) and K.U.Leuven BOF research grant OT/08/032.}\footnote{Department of Mathematics;
    University of Leuven; Celestijnenlaan 200B; B--3001 Leuven (Belgium).
    \\ E-mail: stefaan.vaes@wis.kuleuven.be}}
\end{center}

\begin{abstract}\noindent
We provide a unified and self-contained treatment of several of the recent uniqueness theorems for the group measure space decomposition of a II$_1$ factor. We single out a large class of groups $\Gamma$, characterized by a one-cohomology property, and prove that for every free ergodic probability measure preserving action of $\Gamma$ the associated II$_1$ factor has a unique group measure space Cartan subalgebra up to unitary conjugacy. Our methods follow closely a recent article of Chifan-Peterson, but we replace the usage of Peterson's unbounded derivations by Thomas Sinclair's dilation into a malleable deformation by a one-parameter group of automorphisms.
\end{abstract}

\section{Introduction and main results}

A fundamental problem in the theory of von Neumann algebras is the classification of group measure space II$_1$ factors $\rL^\infty(X) \rtimes \Gamma$ in terms of the initial free ergodic probability measure preserving (pmp) action $\Gamma \actson (X,\mu)$. This problem breaks up in two very different parts. Given an isomorphism $\rL^\infty(X) \rtimes \Gamma \cong \rL^\infty(Y) \rtimes \Lambda$ one first studies whether $\rL^\infty(X)$ and $\rL^\infty(Y)$ are unitarily conjugate. If so this implies that the orbit equivalence relations $\cR(\Gamma \actson X)$ and $\cR(\Lambda \actson Y)$ are isomorphic, leading to the second problem of classifying group actions up to orbit equivalence. This paper deals with the first of these two problems: the uniqueness up to unitary conjugacy of the Cartan subalgebra $\rL^\infty(X) \subset \rL^\infty(X) \rtimes \Gamma$.

A Cartan subalgebra $A$ of a II$_1$ factor $M$ is a maximal abelian von Neumann subalgebra for which the group of unitaries normalizing $A$, i.e.\ $\{u \in \cU(M) \mid uAu^* = A\}$, generates the whole of $M$. If $\Gamma \actson (X,\mu)$ is a free ergodic pmp action, then $\rL^\infty(X) \subset \rL^\infty(X) \rtimes \Gamma$ is a Cartan subalgebra. Not all Cartan subalgebras in a II$_1$ factor can be realized in this way. If they can, we call them group measure space Cartan subalgebras.

By \cite{CFW81} the hyperfinite II$_1$ factor $R$ has a unique Cartan subalgebra up to conjugacy by an automorphism of $R$. Until recently no other uniqueness theorems for Cartan subalgebras were known. A first breakthrough was realized by Ozawa and Popa in \cite{OP07} who proved that the II$_1$ factors $M = \rL^\infty(X) \rtimes \Gamma$ coming from \emph{profinite} free ergodic pmp actions of a direct product of free groups $\Gamma = \F_{n_1} \times \cdots \times \F_{n_k}$, have a unique Cartan subalgebra up to unitary conjugacy. In a second article \cite{OP08} Ozawa and Popa establish the same result for all profinite actions of groups $\Gamma$ satisfying the complete metric approximation property and a strong form of the Haagerup property. This includes lattices in direct products of $\SO(n,1)$ and $\SU(n,1)$.  Peterson then showed in \cite{Pe09} that $M = \rL^\infty(X) \rtimes \Gamma$ has a unique \emph{group measure space} Cartan subalgebra whenever $\Gamma \actson (X,\mu)$ is a profinite action of a free product $\Gamma_1 * \Gamma_2$ where $\Gamma_1$ does not have the Haagerup property and where $\Gamma_2 \neq \{e\}$.

In the joint article \cite{PV09} with Sorin Popa, we introduced a family of amalgamated free product groups $\Gamma = \Gamma_1 *_\Sigma \Gamma_2$ such that for \emph{every} free ergodic pmp action $\Gamma \actson (X,\mu)$, the II$_1$ factor $M:=\rL^\infty(X) \rtimes \Gamma$ has a unique group measure space Cartan subalgebra up to unitary conjugacy.
The family of groups covered by \cite{PV09} consists of the amalgamated free products $\Gamma = \Gamma_1 *_\Sigma \Gamma_2$ such that $\Sigma$ is amenable and weakly malnormal\footnote{By definition we call $\Sigma < \Gamma$ weakly malnormal if there exist $g_1,\ldots,g_n \in \Gamma$ such that $\bigcap_{k=1}^n g_k \Sigma g_k^{-1}$ is finite.} in $\Gamma$ and such that $\Gamma$ either admits a nonamenable subgroup with the relative property (T) or admits two commuting nonamenable subgroups. In particular the result holds for all free products $\Gamma_1 * \Gamma_2$ where $\Gamma_1$ is an infinite property (T) group and $\Gamma_2 \neq \{e\}$.

Combining the uniqueness of the group measure space Cartan subalgebra with existing orbit equivalence superrigidity results from \cite{Po05,Ki09}, we proved in \cite{PV09} several W$^*$-superrigidity theorems: a free ergodic pmp action $\Gamma \actson (X,\mu)$ is called W$^*$-superrigid if the II$_1$ factor $M=\rL^\infty(X) \rtimes \Gamma$ entirely remembers the group action. This means that whenever $M = \rL^\infty(Y) \rtimes \Lambda$, the groups $\Gamma$ and $\Lambda$ are isomorphic and their actions are conjugate.

In \cite{FV10} we proved a similar unique group measure space decomposition theorem for all free ergodic pmp actions of certain HNN extensions $\Gamma = \HNN(H,\Sigma,\theta)$, where $\Sigma$ is amenable and $\Gamma$ satisfies a rigidity assumption as above. In \cite{HPV10} we realized that also certain amalgamated free products $\Gamma = \Gamma_1 *_\Sigma \Gamma_2$ over nonamenable groups $\Sigma$ could be covered by the methods of \cite{PV09}. Combined with a theorem of Kida \cite{Ki09} we deduced in \cite{HPV10} that \emph{every} free ergodic pmp action of the group $\Gamma = \SL(3,\Z)*_\Sigma \SL(3,\Z)$ is W$^*$-superrigid, when $\Sigma < \SL(3,\Z)$ denotes the subgroup of matrices $g$ with $g_{31}=g_{32}=0$.

In the very recent article \cite{CP10}, Chifan and Peterson proposed a more conceptual framework to prove the uniqueness of the group measure space decomposition. Their theorem covers all groups $\Gamma$ that admit a nonamenable subgroup with the relative property (T) and that admit an unbounded $1$-cocycle into a mixing orthogonal representation. The latter means that there exists an orthogonal representation $\pi : \Gamma \recht \cO(H_\R)$ and an unbounded map $b : \Gamma \recht H_\R$ satisfying $b(gh) = b(g) + \pi(g) b(h)$ for all $g,h \in \Gamma$, such that for all $\xi,\eta \in H_\R$ we have $\langle \pi(g)\xi,\eta\rangle \recht 0$ as $g \recht \infty$. Typical examples arise as free products $\Gamma = \Gamma_1 * \Gamma_2$ containing a nonamenable subgroup with the relative property (T). But Chifan and Peterson cover as well direct products $\Gamma \times \Gamma'$ of such groups. This fits perfectly with Monod-Shalom's  orbit equivalence rigidity theorems for direct product groups (see \cite{MS02}) and leads to new W$^*$ strong rigidity results.

Amalgamated free products and HNN extensions also admit unbounded $1$-cocycles into orthogonal representations $\pi$ through the action on their Bass-Serre tree, but these representations $\pi$ are only mixing relative to the amalgam $\Sigma$. In this paper we generalize Chifan-Peterson's result to groups that admit an unbounded $1$-cocycle into an orthogonal representation that is mixing relative to a family of amenable subgroups. As such we obtain a unified treatment for all the uniqueness theorems of the group measure space decomposition in \cite{PV09,FV10,HPV10,CP10}.

Although our methods are very close to those in \cite{CP10}, we do not use Peterson's technique of unbounded derivations (\cite{Pe06} and \cite[Section 4]{OP08}), but rather their dilation into a malleable deformation in the sense of Popa, as proposed by Thomas Sinclair \cite{Si10}. In this way our approach becomes more elementary and we can more directly apply the methods from Popa's deformation/rigidity theory (see \cite{Po06a,Va10} for an overview).

\subsection*{Statements of the main results}

We say that a countable group $\Sigma$ is anti-(T) if there exists a chain of subgroups $\{e\} = \Sigma_0 < \Sigma_1 < \cdots < \Sigma_n = \Sigma$ such that for all $i=1,\ldots,n$ the subgroup $\Sigma_{i-1}$ is normal in $\Sigma_i$ and the quotient $\Sigma_i / \Sigma_{i-1}$ has the Haagerup property.

We say that an orthogonal representation $\pi : \Gamma \recht \cO(H_\R)$ is mixing relative to a family $\cS$ of subgroups of $\Gamma$ if the following holds: for all $\xi,\eta \in H_\R$ and $\eps > 0$ there exist a finite number of $g_i,h_i \in \Gamma$ and $\Sigma_i \in \cS$ such that $|\langle \pi(g)\xi,\eta \rangle| < \eps$ for all $g \in \Gamma - \bigcup_{i=1}^n g_i \Sigma_i h_i$.

\begin{definition}\label{def.classgroups}
We consider three classes of countable groups $\Gamma$ that admit an unbounded $1$-cocycle $b : \Gamma \recht H_\R$ into an orthogonal representation $\pi : \Gamma \recht \cO(H_\R)$ that is mixing relative to a family $\cS$ of subgroups of $\Gamma$, with $b$ being bounded on every $\Sigma \in \cS$. These three classes correspond to imposing a rigidity on $\Gamma$ versus a softness on the groups in $\cS$.

{\bf Class $\cC$.} $\Gamma$ has a nonamenable subgroup with the relative property (T) and the groups in $\cS$ are amenable.

{\bf Class $\cD$.} $\Gamma$ has an infinite subgroup with the plain property (T) and the groups in $\cS$ are anti-(T).

{\bf Class $\cE$.} $\Gamma$ has two commuting nonamenable subgroups, the groups in $\cS$ are amenable and $\pi$ is weakly contained in the regular representation.

We also consider the classes $\cC_2$, respectively $\cD_2$, consisting of direct products $\Gamma_1 \times \Gamma_2$ where both $\Gamma_i \in \cC$, respectively both $\Gamma_i \in \cD$. The groups $\Gamma_i$ come with a family $\cS_i$ of subgroups and we consider the family $\cS$ of subgroups of $\Gamma$ of the form $\Sigma_1 \times \Sigma_2$, $\Sigma_i \in \cS_i$.
\end{definition}

The following is our main theorem. Given a group $\Gamma$ in any of the classes introduced above, we \lq locate\rq\ any possible group measure space Cartan subalgebra $B$ of any crossed product $A \rtimes \Gamma$ and prove that it must have an intertwining bimodule into $A \rtimes \Sigma$ for some $\Sigma \in \cS$. We refer to Theorem \ref{thm.intertwining} below for the definition of Popa's intertwining bimodules and the corresponding notation $\prec$. If the groups $\Sigma \in \cS$ are moreover finite or sufficiently nonnormal, it follows that $\rL^\infty(X) \rtimes \Gamma$ has a unique group measure space Cartan subalgebra up to unitary conjugacy for all free ergodic pmp actions $\Gamma \actson (X,\mu)$, see Theorem \ref{thm.uniquecartan}. If moreover $\Gamma \actson (X,\mu)$ is orbit equivalence superrigid, the action $\Gamma \actson (X,\mu)$ follows W*-superrigid in the sense that the II$_1$ factor $\rL^\infty(X) \rtimes \Gamma$ entirely remembers the group action that it was constructed from.

\begin{theorem}\label{thm.main}
Let $\Gamma$ be a group in $\cC \cup \cD \cup \cE \cup \cC_2 \cup \cD_2$ together with its family $\cS$ of subgroups as in Definition \ref{def.classgroups}.

Let $M$ be a II$_1$ factor of the form $M = A \rtimes \Gamma$ where $A$ is of type I. Let $p \in M$ be a projection and assume that $pMp = B \rtimes \Lambda$ is another crossed product decomposition with $B$ being of type I. Then there exists $\Sigma \in \cS$ such that $B \prec A \rtimes \Sigma$.
\end{theorem}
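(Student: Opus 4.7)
Apply the Gaussian functor to the orthogonal representation $\pi : \Gamma \recht \cO(H_\R)$ to obtain a tracial von Neumann algebra $\widetilde{A_0}$ equipped with a $\Gamma$-action, and set $\Mtil := (A \ovt \widetilde{A_0}) \rtimes \Gamma \supset M$. The 1-cocycle $b$ gives rise to Sinclair's malleable deformation: a one-parameter group $(\alpha_t)_{t \in \R}$ of trace-preserving automorphisms of $\Mtil$ pointwise fixing $A$, together with a period-2 automorphism $\beta \in \Aut(\Mtil)$ satisfying the transversality relation $\beta \alpha_t \beta = \alpha_{-t}$. The defining estimate $\|\alpha_t(u_g) - u_g\|_2 \leq |t|\,\|b(g)\|$ ensures $\alpha_t \to \id$ uniformly on $\cU(A \rtimes \Sigma)$ for each $\Sigma \in \cS$, while unboundedness of $b$ prevents uniform convergence on all of $\cU(M)$.

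\textbf{Step 2 (Extracting rigidity from $\Gamma$).} Using the rigidity hypothesis, isolate a subalgebra $Q \subset pMp$ on which $\alpha_t \to \id$ uniformly. In class $\cC$, the nonamenable relative-(T) subgroup $\Gamma_0 \leq \Gamma$ produces a rigid inclusion $\rL\Gamma_0 \subset M$ and uniform convergence of $\alpha_t$ on a corner of $\rL\Gamma_0$; in class $\cD$, the plain property (T) subgroup plays the analogous role, while the anti-(T) hypothesis on $\cS$ is exploited to make the deformation compact relative to the family $\{A \rtimes \Sigma\}_{\Sigma \in \cS}$; in class $\cE$ one invokes Popa's spectral gap rigidity, using that the two commuting nonamenable subgroups of $\Gamma$ yield a spectral gap in the relative commutant and that the weak containment of $\pi$ in the regular representation makes $\rL^2(\Mtil) \ominus \rL^2(M)$ a coarse $M$-bimodule, which is precisely what the spectral gap argument requires. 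The product classes $\cC_2,\cD_2$ are reduced to the single-factor case by first running the analysis inside one tensor factor.

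\textbf{Step 3 (Dichotomy via relative mixing; the main obstacle).} The heart of the proof is a Chifan--Peterson-style dichotomy adapted to relative mixing. Using that the $M$-bimodule $\rL^2(\Mtil) \ominus \rL^2(M)$ is built out of coefficients of $\pi$, one aims to show: either $B \prec_M A \rtimes \Sigma$ for some $\Sigma \in \cS$, or the relative mixing of $\pi$ combined with a Popa-type intertwining criterion propagates uniform $\alpha_t$-convergence from the rigid subalgebra $Q$ of Step 2, through the normalizer of $B$ in $pMp$ (which generates $pMp$), to all of $\cU(B)$. In the second alternative, the malleability together with the transversality $\beta \alpha_t \beta = \alpha_{-t}$ and Popa's standard local criterion identify the $\alpha_t$-fixed algebra intertwining $B$ with some $A \rtimes \Sigma$, yielding the same conclusion $B \prec_M A \rtimes \Sigma$. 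The principal obstacle is precisely this transfer step: because $\pi$ is only \emph{relatively} mixing with respect to $\cS$---rather than globally mixing as in the original Chifan--Peterson setting---a substantially more delicate analysis of the $(M, A \rtimes \Sigma)$-bimodule structure of $\rL^2(\Mtil) \ominus \rL^2(M)$, together with Popa's intertwining techniques applied coset-by-coset in $\cS$, is required.
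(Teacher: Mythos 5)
Your Steps 1 and 2 are essentially on track (Step 1 is Sinclair's dilation, exactly as in \S\ref{sec.dilation}; the identification of the rigid subalgebra $Q$ for each class in Step 2 matches the case analysis in the paper). The problem is Step 3, which is the crux of the argument and where your proposal has a genuine gap.

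You write that one should propagate uniform $\alpha_t$-convergence ``from the rigid subalgebra $Q$ of Step 2, through the normalizer of $B$ in $pMp$\dots to all of $\cU(B)$.'' But there is no such propagation available. The rigid subalgebra $Q$ (say $\rL H$ for a relative property (T) subgroup $H<\Gamma$) has no a priori relation to the unknown Cartan subalgebra $B$ or the unknown group $\Lambda$: knowing that $\alpha_t\to\id$ uniformly on $(\rL H)_1$ is trivial (this holds for any finite subset of $M$, and for all of $\rL H$ it follows just from the cocycle being bounded on $H$ if that happens, or from property (T) directly) and by itself says nothing about $B$. You cannot leverage the normalizer of $B$ here either, because you have no uniform convergence of $\alpha_t$ on a \emph{single} unitary normalizing $B$ --- let alone ``enough'' of them --- without some further mechanism.

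The missing mechanism, which is the central idea the paper imports from \cite{PV09}, is the comultiplication
$$\Delta : pMp \to pMp \ovt \rL\Lambda, \qquad \Delta(b v_s) = b v_s \ot v_s ,$$
together with the ``transfer of rigidity'' principle (Proposition \ref{prop.transfer}). The point is that $\Delta$ embeds $pMp$ diagonally, turning the rigidity of $\Gamma$ --- which lives entirely in the first decomposition $M = A\rtimes\Gamma$ --- into uniform convergence of $\id\ot\alpha_t$ on $\Delta(Q)$; then the Fourier expansion of $\Delta(w_i)$ over $\Lambda$ is used to extract a \emph{net of elements $s_j\in\Lambda$} such that $\alpha_t\to\id$ uniformly on the tail of $(v_{s_j})$ and with $v_{s_j}$ eventually escaping all corners of $A\rtimes\Sigma$, $\Sigma\in\cS$. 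Only \emph{then} do the normalizer-propagation results kick in: Theorem \ref{thm.uniform-abelian} transfers uniform convergence from the tail of the normalizing net $(v_{s_j})$ to the unit ball of $\cZ(B)$ (hence of $B$), and Theorem \ref{thm.uniform-normalizer} then shows that if $B\not\prec A\rtimes\Sigma$ for every $\Sigma\in\cS$, this uniform convergence would spread to all of $pMp$, contradicting unboundedness of $b$. So the direction of the propagation is from normalizing unitaries in $\Lambda$ into $B$, then from $B$ out to the normalizer; your description inverts this and, more seriously, omits the $\Delta$-step that produces those normalizing unitaries in the first place. Also note the $\cC_2,\cD_2$ cases are not a straightforward reduction to a single tensor factor as you suggest: the paper runs the whole machine for each factor $\Gamma_i$, uses Lemma \ref{lem.intersection} to intersect the two resulting small families, and needs a nontrivial argument (verifying \eqref{eq.versionclaim}) to locate the projection $q$ required by the meta-theorem.

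Finally, a minor point: the estimate $\|\alpha_t(u_g)-u_g\|_2\le |t|\,\|b(g)\|$ should be $\sqrt{2}\,|t|\,\|b(g)\|$ (one computes $\|\alpha_t(u_g)-u_g\|_2^2 = 2-2\exp(-t^2\|b(g)\|^2)\le 2t^2\|b(g)\|^2$), which does not affect the argument but is worth getting right.
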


Theorem \ref{thm.main} is sufficiently general to cover all the uniqueness theorems of group measure space Cartan subalgebras from \cite{PV09,FV10,HPV10,CP10}. The precise formulation goes as follows.

We say that $\Sigma \subset \Gamma$ is a weakly malnormal subgroup if there exist $g_1,\ldots,g_n \in \Gamma$ such that $\bigcap_{k=1}^n g_k \Sigma g_k^{-1}$ is finite.
We say that $\Sigma \subset \Gamma$ is relatively malnormal if there exists a subgroup $\Lambda < \Gamma$ of infinite index such that $g \Sigma g^{-1} \cap \Sigma$ is finite for all $g \in \Gamma - \Lambda$.

We consider amalgamated free products $\Gamma_1 *_\Sigma \Gamma_2$ and call them nontrivial when $\Gamma_1 \neq \Sigma \neq \Gamma_2$. We also consider HNN extensions $\HNN(H,\Sigma,\theta)$ w.r.t.\ a subgroup $\Sigma < H$ and an injective group homomorphism $\theta : \Sigma \recht H$, generated by a copy of $H$ and an extra generator $t$ satisfying $t \sigma t^{-1} = \theta(\sigma)$ for all $\sigma \in \Sigma$.

\begin{theorem}\label{thm.uniquecartan}
For all of the following groups $\Gamma$ and arbitrary free ergodic pmp actions $\Gamma \actson (X,\mu)$, the II$_1$ factor $M = \rL^\infty(X) \rtimes \Gamma$ has a unique group measure space Cartan subalgebra up to unitary conjugacy. More generally, if $p(\M_n(\C) \ot M)p = \rL^\infty(Y) \rtimes \Lambda$ is an arbitrary group measure space decomposition, there exists $u \in \M_n(\C) \ot M$ such that $p=u^* u$, such that $q:=u u^*$ belongs to $\D_n(\C) \ot \rL^\infty(X)$ and such that $u \rL^\infty(Y) u^* = (\D_n(\C) \ot \rL^\infty(X))q$. Here $\D_n(\C) \subset \M_n(\C)$ denotes the subalgebra of diagonal matrices.
\begin{enumerate}
\item Any of the groups $\Gamma \in \cC \cup \cD \cup \cE \cup \cC_2 \cup \cD_2$ such that the family $\cS$ consists of relatively malnormal subgroups of $\Gamma$.
\item \cite[Corollary 5.3]{CP10}. Any of the groups $\Gamma \in \cC \cup \cD \cup \cE \cup \cC_2 \cup \cD_2$ such that the family $\cS$ is reduced to $\{\{e\}\}$, i.e.\ the case where the orthogonal representations are mixing.
\item \cite[Theorem 5.2]{PV09}. $\Gamma$ is a nontrivial amalgamated free product $\Gamma_1 *_\Sigma \Gamma_2$ where $\Sigma$ is amenable and weakly malnormal in $\Gamma$ and where $\Gamma$ admits a nonamenable subgroup with the relative property (T) or admits two commuting nonamenable subgroups.
\item \cite[Theorem 5]{HPV10}. $\Gamma$ is a nontrivial amalgamated free product $\Gamma_1 *_\Sigma \Gamma_2$ where $\Sigma$ is anti-(T) and weakly malnormal in $\Gamma$ and where $\Gamma$ admits an infinite subgroup with property (T).
\item \cite[Theorem 4.1]{FV10}. $\Gamma$ is an HNN extension $\HNN(H,\Sigma,\theta)$ such that $\Sigma$ is amenable and weakly malnormal in $\Gamma$ and such that $\Gamma$ admits a nonamenable subgroup with the relative property (T) or admits two commuting nonamenable subgroups.
\end{enumerate}
\end{theorem}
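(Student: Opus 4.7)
The plan is to derive Theorem \ref{thm.uniquecartan} from Theorem \ref{thm.main} by (i) verifying that each of the five cases fits the framework of Definition \ref{def.classgroups}, (ii) applying Theorem \ref{thm.main} to locate any group measure space Cartan $B$ inside $A \rtimes \Sigma$ for some $\Sigma \in \cS$, and (iii) using the (weak or relative) malnormality of $\Sigma$ to upgrade this to $B \prec_M A$, at which point a standard Popa-type argument for two Cartan subalgebras produces the claimed unitary conjugacy in its precise matricial form.

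For step (i), cases (1) and (2) require nothing to check. For (3)--(5) I would use the Bass-Serre tree $\cT$ of the amalgamated free product or HNN extension: the action of $\Gamma$ on $\ell^2_\R(E(\cT))$ is an orthogonal representation $\pi$ whose edge stabilizers are exactly the $\Gamma$-conjugates of $\Sigma$, so $\pi$ is mixing relative to the family $\cS := \{g \Sigma g^{-1} \mid g \in \Gamma\}$; the cocycle sending $g \in \Gamma$ to $g \cdot \delta_{e_0} - \delta_{e_0}$ for a fixed base-edge $e_0$ is unbounded on $\Gamma$ and bounded on every $\Sigma' \in \cS$. The remaining hypotheses on $\Sigma$ and $\Gamma$ in (3), (4), (5) match the defining conditions of $\cC$, $\cD$, or $\cE$ respectively.

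Step (iii) is the main obstacle. Theorem \ref{thm.main} provides a nonzero partial isometry $v \in \M_{1,n}(\C) \ot M$ with $v B v^* \subset A \rtimes \Sigma$ on its range. Assume for contradiction that $B \notembed{M} A$. Because $B$ is Cartan in $pMp$ its normalizer is large, so one can move $v B v^*$ around by unitaries in $M$ to produce corners of $B$ that embed simultaneously into $g_k (A \rtimes \Sigma) g_k^{-1}$ for a family $g_1, \ldots, g_n$ coming from the malnormality of $\Sigma$. But $\bigcap_k g_k (A \rtimes \Sigma) g_k^{-1} = A \rtimes F$ with $F := \bigcap_k g_k \Sigma g_k^{-1}$ finite, and $A \rtimes F$ is a finite-index extension of $A$, whence $B \prec_M A$, contradicting the assumption. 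This malnormality trick is the technical heart and has appeared in several guises in \cite{PV09,FV10,HPV10,CP10}; I would isolate and quote the appropriate version rather than re-prove it.

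Finally, with $B \prec_M A$ and both $A$, $B$ Cartan, Popa's intertwining theorem for Cartan pairs yields a partial isometry $u \in \M_n(\C) \ot M$ with $u^* u = p$, $u u^* \in \D_n(\C) \ot A$, and $u B u^* = (\D_n(\C) \ot A) u u^*$. The diagonal-matrix form is obtained by splitting $u u^*$ along minimal projections of its central support inside $\D_n(\C) \ot A$ and reassembling via partial isometries in the normalizer of the Cartan $A$; this is routine bookkeeping once the intertwining is in hand.
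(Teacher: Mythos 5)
Your overall strategy (deduce the theorem from Theorem~\ref{thm.main}, then upgrade $B \prec A \rtimes \Sigma$ to unitary conjugacy with $A$) is the same as the paper's. But two points in your sketch are wrong or too vague to work.

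First, a concrete slip: your proposed cocycle $b(g) = g\cdot\delta_{e_0} - \delta_{e_0}$ is a \emph{coboundary}, hence bounded by $2$. It cannot be the unbounded cocycle needed. The correct cocycle is the path cocycle determined by the Bass--Serre tree, which the paper specifies by its values on the generating subgroups: $b\equiv 0$ on $\Gamma_1$ and $b(h)=\delta_\Sigma-\delta_{h\Sigma}$ on $\Gamma_2$ (and similarly $b\equiv 0$ on $H$, $b(t)=\delta_{t\Sigma}$ for HNN). The general value is then recovered by the cocycle relation and its norm grows like tree-distance, which is unbounded.

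Second, and more substantial: your step (iii) argues via weak malnormality, intersecting finitely many conjugates $g_k(A\rtimes\Sigma)g_k^{-1}$ to reach $A\rtimes F$ with $F$ finite. This simply does not apply to case (1), where $\Sigma$ is only assumed \emph{relatively} malnormal: the condition there is that $g\Sigma g^{-1}\cap\Sigma$ be finite for $g\notin\Lambda$ with $[\Gamma:\Lambda]=\infty$, and nothing prevents $\bigcap_k g_k\Sigma g_k^{-1}$ from being infinite for any finite choice of $g_k$ (for instance $\Sigma$ could be normal in $\Lambda$). The paper handles case (1) with Lemma~\ref{lem.malnormal}, which has a genuinely different shape: if $B\prec A\rtimes\Sigma$ and $B\not\prec A$, then the \emph{normalizer} $Q$ of $B$ satisfies $Q\prec A\rtimes\Lambda$. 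Since $\rL^\infty(Y)$ is Cartan, its normalizer is all of $p(A\rtimes\Gamma)p$, which cannot embed into $A\rtimes\Lambda$ because $\Lambda$ has infinite index; hence $B\prec A$. Moreover, the paper explicitly remarks that the purely weakly-malnormal version of the theorem could not be proved at this generality --- for cases (3)--(5) the intersection argument you describe only goes through because of the additional Bass--Serre structure, which is exactly what \cite[Proposition 8]{HPV10} exploits. You do say you would quote the right version for (3)--(5), which is fine, but as written the proposal misses the separate mechanism (Lemma~\ref{lem.malnormal}) needed for case (1).

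The last step, converting $B\prec A$ into the precise matricial unitary conjugacy, is correctly attributed to the standard Cartan conjugacy result (\cite[Theorem A.1]{Po01}), so no issue there.
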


Ideally Theorem \ref{thm.uniquecartan} could hold for all groups $\Gamma$ in $\cC \cup \cD \cup \cE \cup \cC_2 \cup \cD_2$ for which $\cS$ consists of weakly malnormal subgroups of $\Gamma$, but we were unable to prove such a statement.

As in \cite[Corollary 4.4]{OP07} and \cite[Theorem 1.4]{PV09} we also get plenty of II$_1$ factors that have no group measure space Cartan subalgebra.

\begin{theorem}\label{thm.no-group-measure-space}
Let $\Gamma$ be any of the groups in Theorem \ref{thm.uniquecartan} and assume moreover that $\Gamma$ has infinite conjugacy classes (icc). Let $\Gamma \actson (X,\mu)$ be an ergodic pmp action that is not essentially free. Denote $M = \rL^\infty(X) \rtimes \Gamma$ and observe that $M$ is a II$_1$ factor. This includes the case where $X$ is one point and $M = \rL \Gamma$. The II$_1$ factors $M^t$, $t > 0$, have no group measure space decomposition.
\end{theorem}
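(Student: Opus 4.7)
The plan is to argue by contradiction. Suppose that for some $t > 0$ the II$_1$ factor $M^t$ admits a group measure space decomposition. Writing $M^t = p(\M_n(\C) \ot M)p$ for suitable $n$ and projection $p \in \M_n(\C) \ot M$, the assumption reads $p(\M_n(\C) \ot M)p = \rL^\infty(Y) \rtimes \Lambda$ for some free ergodic pmp action $\Lambda \actson (Y,\nu)$. The ambient factor satisfies $\M_n(\C) \ot M = (\M_n(\C) \ot \rL^\infty(X)) \rtimes \Gamma$ with coefficient algebra still of type I, so Theorem \ref{thm.main} yields some $\Sigma \in \cS$ for which $\rL^\infty(Y) \prec (\M_n(\C) \ot \rL^\infty(X)) \rtimes \Sigma$.

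Next, rerun the argument used to deduce Theorem \ref{thm.uniquecartan} from Theorem \ref{thm.main}. Essential freeness of $\Gamma \actson (X,\mu)$ does not enter in the intermediate refinement of this intertwining: the (weak or relative) malnormality of $\Sigma$ in $\Gamma$, combined with the Cartan property of $\rL^\infty(Y) \subset p(\M_n(\C) \ot M)p$ (which holds because the $\Lambda$-side is a group measure space decomposition) and the usual normalizer/quasi-normalizer control, should produce a partial isometry $u \in \M_n(\C) \ot M$ with $u^*u = p$, $q := uu^* \in \D_n(\C) \ot \rL^\infty(X)$ and $u\,\rL^\infty(Y)\,u^* = (\D_n(\C) \ot \rL^\infty(X))q$, precisely as in the formulation of Theorem \ref{thm.uniquecartan}.

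The contradiction now comes from the maximal abelianness of the Cartan $\rL^\infty(Y)$. Write $q = \sum_{i=1}^n e_{ii} \ot q_i$ with $q_i \in \rL^\infty(X)$ projections. A direct computation shows that the commutant of $\D_n(\C) \ot \rL^\infty(X)$ in $\M_n(\C) \ot M$ equals $\D_n(\C) \ot (\rL^\infty(X)' \cap M)$, so the fact that $(\D_n(\C) \ot \rL^\infty(X))q$ is maximal abelian in $q(\M_n(\C) \ot M)q$ forces $q_i (\rL^\infty(X)' \cap M) q_i = \rL^\infty(X) q_i$ for each $i$ with $q_i \neq 0$. But $\rL^\infty(X)' \cap M$ is weakly spanned by the elements $\mathbf 1_{X_g} u_g$ with $g \in \Gamma$ and $X_g := \{x \in X : g \cdot x = x\}$; since $g$ fixes $X_g$ pointwise, one computes $q_i (\mathbf 1_{X_g} u_g) q_i = (q_i \mathbf 1_{X_g}) u_g$, and this lies in $\rL^\infty(X) q_i$ only when $q_i \mathbf 1_{X_g} = 0$. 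Hence the support of each nonzero $q_i$ is contained in the free locus $X_{\mathrm{free}} := \{x \in X : \Stab(x) = \{e\}\}$, which is $\Gamma$-invariant and therefore, by ergodicity, has full measure. This means $\Gamma \actson (X,\mu)$ is essentially free, contradicting the hypothesis.

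The delicate step is the second one: one must inspect the proof by which Theorem \ref{thm.uniquecartan} is extracted from Theorem \ref{thm.main} and confirm that every normalizer-control and intertwining-bootstrap argument relies only on the Cartan property of $\rL^\infty(Y)$ and on the group-theoretic hypotheses on $(\Gamma, \cS)$, never on essential freeness of $\Gamma \actson (X,\mu)$. Once this verification is complete, the contradiction from maximal abelianness in the third step is a short direct calculation, paralleling \cite[Corollary 4.4]{OP07} and \cite[Theorem 1.4]{PV09}.
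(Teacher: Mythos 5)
Your Step 3 contains a genuine gap that the paper's own (very short) proof is precisely designed to avoid. You claim that ``rerunning the argument used to deduce Theorem \ref{thm.uniquecartan} from Theorem \ref{thm.main}'' will, after verification, ``produce a partial isometry $u$'' implementing a unitary conjugacy $u\,\rL^\infty(Y)\,u^* = (\D_n(\C) \ot \rL^\infty(X))q$. But in every branch of that argument, the final upgrade from the intertwining $\rL^\infty(Y) \prec \rL^\infty(X)$ to a unitary conjugacy is done by invoking \cite[Theorem A.1]{Po01}, and that theorem requires \emph{both} abelian subalgebras to be Cartan (in particular maximal abelian). When $\Gamma \actson (X,\mu)$ is not essentially free, $\rL^\infty(X)$ is not maximal abelian in $M$ (indeed $\rL^\infty(X)' \cap M$ is strictly larger, as your own Step 4 computation makes explicit), hence $\D_n(\C) \ot \rL^\infty(X)$ is not a Cartan subalgebra of $\M_n(\C) \ot M$ and the unitary conjugacy you are after is simply false. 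So the ``verification'' you defer is guaranteed to fail: essential freeness \emph{does} enter the proof of Theorem \ref{thm.uniquecartan}, exactly at the last step.

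The paper circumvents this by stopping one step earlier: from the proof of Theorem \ref{thm.uniquecartan} one extracts only the intertwining relation $B \prec \rL^\infty(X)$ for the hypothetical group measure space Cartan subalgebra $B$ (this part genuinely does not use freeness of $\Gamma \actson X$), and then one quotes \cite[Lemma 4.11]{OP07}, which says precisely that a Cartan subalgebra of $M = \rL^\infty(X) \rtimes \Gamma$ intertwining into $\rL^\infty(X)$ forces the action to be essentially free. Your Step 4 computation with the projections $q_i$, the fixed-point sets $X_g$, and the free locus $X_{\mathrm{free}}$ is correct as far as it goes, and it is morally a proof of the relevant special case of that lemma. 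But it is carried out under the assumption of unitary conjugacy, which is unavailable here; to make your route rigorous you would have to redo the maximal-abelianness analysis starting from a mere intertwining bimodule $\theta : \rL^\infty(Y) \recht \M_k(\C) \ot \rL^\infty(X)$ and a partial isometry $v$ with $bv = v\theta(b)$, which is exactly the content of \cite[Lemma 4.11]{OP07}. Either cite that lemma after establishing $B \prec \rL^\infty(X)$, or supply the intertwining-level version of your computation; as written the argument does not close.
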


In \cite{Ki10} Kida proves measure equivalence rigidity theorems for amalgamated free product groups $\Gamma$. Such results are complementary to uniqueness theorems for the group measure space Cartan subalgebra since the latter reduce a W*-equivalence to an orbit equivalence to which the measure equivalence rigidity theorems can be applied. It is therefore interesting to notice that Kida does not use the $1$-cocycle that goes with the action of $\Gamma$ on the Bass-Serre tree $\cT$, but rather the very much related map that associates to three distinct points of $\ox,\oy,\oz \in \partial \cT$ the unique vertex of the tree where the three geodesics $\ox\,\oy$, $\oy\,\oz$ and $\oz\,\ox$ meet.

We have made the choice to write an essentially self-contained article with rather elementary proofs. This means that we provide detailed arguments for a number of lemmas that are well known to some experts. It also gives us the occasion to reprove a number of results, e.g.\ Peterson's theorem on the solidity of certain group von Neumann algebras, see Theorem \ref{thm.solid}.

\subsection*{Structure of the proofs}

Let $M$ be a II$_1$ factor of the form $M = \rL^\infty(X) \rtimes \Gamma$, where $\Gamma$ is a countable group and $b : \Gamma \recht H_\R$ is an unbounded $1$-cocycle into an orthogonal representation $\pi : \Gamma \recht \cO(H_\R)$. We assume that $\pi$ is mixing relative to a family $\cS$ of subgroups of $\Gamma$ and that $b$ is bounded on every $\Sigma \in \cS$.
Following \cite[Definition 15.1.1]{BO08} we call a subset $\cF \subset \Gamma$ small relative to $\cS$ when $\cF$ can be written as a finite union of subsets of the form $g \Sigma h$, $g,h \in \Gamma$, $\Sigma \in \cS$.

Associated with $(b,\pi)$ is the conditionally negative type function $\psi(g) = \|b(g)\|^2$ and the semigroup group $(\vphi_t)_{t > 0}$ of completely positive maps
$$\vphi_t : M \recht M : \vphi_t(a u_g) = \exp(-t \psi(g)) \; a u_g \quad\text{for all}\;\; a \in \rL^\infty(X) , g \in \Gamma \; .$$
Assume that $M = \rL^\infty(Y) \rtimes \Lambda$ is another group measure space decomposition. Also assume that $\Gamma$ satisfies a rigidity assumption, like the presence of a nonamenable subgroup with the (relative) property (T) or the presence of two commuting nonamenable subgroups.

\begin{somop}
\item It is of course impossible to prove that $\Lambda$ automatically inherits similar rigidity properties as $\Gamma$ has. Nevertheless the transfer of rigidity principle from \cite{PV09} allows to prove that there exists a sequence of group elements $s_n \in \Lambda$ such that $\vphi_t \recht \id$ in \two uniformly on $\{v_{s_n} \mid n \in \N\}$. Also the sequence can be taken \lq large enough\rq\ in the sense that $v_{s_n}$ asymptotically leaves all the subspaces spanned by $\{a u_k \mid a \in \rL^\infty(X), k \in \cF\}$ for any fixed subset $\cF \subset \Gamma$ that is small relative to $\cS$. These matters are dealt with in Proposition \ref{prop.transfer} and Lemma \ref{lem.no-intertwine}. The \lq larger\rq\ the subgroups in $\cS$ are allowed to be, the stricter the rigidity assumption on $\Gamma$ must be.

\item The unitaries $v_{s_n}$ normalize the abelian von Neumann subalgebra $\rL^\infty(Y)$ and the deformation $\vphi_t$ converges to the identity uniformly on the $(v_{s_n})_{n \in \N}$. In Theorem \ref{thm.uniform-abelian} we prove that then $\vphi_t$ must converge to the identity uniformly on the unit ball of $\rL^\infty(Y)$. The roots of this result lie in \cite[Theorem 4.1]{Pe09}. Our theorem and its proof are almost identical to \cite[Theorem 3.2]{CP10}, but we manage to treat as well the case of nonmixing representations.

    It is illustrative to compare Theorem \ref{thm.uniform-abelian} and its proof to the following group theoretic result inspired by \cite{CTV06}. Let $b : \Gamma \recht H_\R$ be a $1$-cocycle into an orthogonal representation $\pi : \Gamma \recht \cO(H_\R)$ that is mixing relative to a family $\cS$ of subgroups of $\Gamma$. Assume that $b$ is bounded on every subgroup $\Sigma \in \cS$. We say that a sequence $(g_n)$ tends to infinity relative to $\cS$ if $(g_n)$ eventually leaves every subset of $\Gamma$ that is small relative to $\cS$. Whenever $H < \Gamma$ is an abelian subgroup that is normalized by a sequence $(g_n)$ tending to infinity relative to $\cS$ and on which $b$ is bounded, then the $1$-cocycle $b$ is bounded on $H$. The proof consists of two cases. Put $\kappa = \sup_n \|b(g_n)\| < \infty$.

    {\bf Case 1.} There is no $h \in H$ such that the sequence $g_n h g_n^{-1}$ tends to infinity relative to $\cS$. We show that $\|b(h)\| \leq 2\kappa$ for all $h \in H$. To prove this statement, fix $h \in H$. Since $g_n h g_n^{-1}$ does not tend to infinity relative to $\cS$, we can pass to a subsequence and find $\gamma, \gamma' \in \Gamma$ and $\Sigma \in \cS$ such that $g_n h g_n^{-1} \in \gamma \Sigma \gamma'$ for all $n$. Since $b$ is bounded on $\Sigma$, there exists a vector $\xi \in H_\R$ such that $b(\sigma) = \pi(\sigma)\xi - \xi$ for all $\sigma \in \Sigma$. We then find $\xi_1,\xi_2 \in H_\R$ such that $b(g) = \pi(g)\xi_1 + \xi_2$ for all $g \in \gamma \Sigma \gamma'$. In particular $b(g_n h g_n^{-1}) = \pi(g_n h g_n^{-1})\xi_1 + \xi_2$ for all $n$. Since $b(g_n h g_n^{-1}) = b(g_n) + \pi(g_n) b(h) - \pi(g_n h g_n^{-1})b(g_n)$ we conclude that
\begin{align*}
\|b(h)\|^2 &= \langle \pi(g_n) b(h) , \pi(g_n) b(h)\rangle = \langle \pi(g_n) b(h), b(g_n hg_n^{-1}) - b(g_n) + \pi(g_nhg_n^{-1}) b(g_n) \rangle \\
& \leq 2 \kappa \|b(h)\| + |\langle \pi(g_n) b(h) , \pi(g_n h g_n^{-1})\xi_1 + \xi_2 \rangle| \\
& \leq 2 \kappa \|b(h)\| + |\langle \pi(g_n) \pi(h)^* b(h), \xi_1\rangle| + |\langle \pi(g_n) b(h), \xi_2 \rangle| \recht 2 \kappa \|b(h)\|
\end{align*}
because $(g_n)$ tends to infinity relative to $\cS$ and $\pi$ is mixing relative to $\cS$. We have shown that $\|b(h)\| \leq 2 \kappa$ for all $h \in H$.

{\bf Case 2.} There exists $h_0 \in H$ such that the sequence $h_n := g_n h_0 g_n^{-1}$ tends to infinity relative to $\cS$. Put $\kappa_1 = 2 \kappa + \|b(h_0)\|$. Note that $\|b(h_n)\| \leq \kappa_1$ for all $n$. We show that $\|b(h)\| \leq 2 \kappa_1$ for all $h \in H$. To prove this statement, fix $h \in H$. Since $H$ is abelian, we have $h = h_n h h_n^{-1}$ and hence
\begin{align*}
\|b(h)\|^2 &= \langle b(h_n h h_n^{-1}), b(h) \rangle = \langle b(h_n) + \pi(h_n) b(h) - \pi(h_n h h_n^{-1}) b(h_n), b(h)\rangle \\
& \leq 2 \kappa_1 \|b(h)\| + |\langle \pi(h_n) b(h),b(h)\rangle| \recht 2 \kappa_1 \|b(h)\|\; .
\end{align*}
We have shown that $\|b(h)\| \leq 2 \kappa_1$ for all $h \in H$.

\item From the previous step we know that $\vphi_t \recht \id$ in \two uniformly on the unit ball of $\rL^\infty(Y)$. We argue by contradiction that there exists a $\Sigma \in \cS$ such that $\rL^\infty(Y) \prec \rL^\infty(X) \rtimes \Sigma$. If the statement is false Lemma \ref{lem.intertwining} provides a sequence of unitaries $b_n \in \rL^\infty(Y)$ such that $b_n$ asymptotically leaves all the subspaces spanned by $\{a u_k \mid a \in \rL^\infty(X), k \in \cF\}$ for any fixed subset $\cF \subset \Gamma$ that is small relative to $\cS$.

    By \cite[Theorem 2.5]{CP10} --for which we provide a self-contained proof as Theorem \ref{thm.uniform-normalizer} below-- it follows that $\vphi_t \recht \id$ in \two uniformly on the unit ball of the normalizer of $\rL^\infty(Y)$, i.e.\ on the unit ball of the whole of $M$. This is absurd because the $1$-cocycle $b$ was assumed to be unbounded.

    Also Theorem \ref{thm.uniform-normalizer} and its proof can be illustrated by a well known group theoretic fact. Let $b : \Gamma \recht H_\R$ be a $1$-cocycle into an orthogonal representation $\pi : \Gamma \recht \cO(H_\R)$ that is mixing relative to a family $\cS$ of subgroups of $\Gamma$. Assume that $H < \Gamma$ is a subgroup of $\Gamma$ on which the cocycle is bounded. Denote by $\cN_\Gamma(H)$ the normalizer of $H$ inside $\Gamma$. If $H$ contains a sequence $(h_n)$ tending to infinity relative to $\cS$ (meaning that $(h_n)$ eventually leaves every subset of $\Gamma$ that is small relative to $\cS$), then $b$ is bounded on $\cN_\Gamma(H)$. The proof of this fact goes as follows.

    Put $\kappa = \sup_{h \in H}\|b(h)\| < \infty$. We show that $\|b(g)\| \leq 2\kappa$ for all $g \in \cN_\Gamma(H)$. To prove this statement fix $g \in \cN_\Gamma(H)$. Write $k_n := g^{-1} h_n^{-1} g$ and note that $k_n \in H$. By construction $g = h_n g k_n$. Therefore
    \begin{align*}
    \|b(g)\|^2 &= \langle b(g),b(g)\rangle = \langle b(h_n g k_n),b(g)\rangle = \langle b(h_n) +\pi(h_n) b(g) + \pi(h_ng) b(k_n),b(g)\rangle \\
    & \leq 2\kappa \|b(g)\| + |\langle \pi(h_n) b(g),b(g)\rangle| \recht 2 \kappa \|b(g)\| \; .
    \end{align*}
    It follows that $\|b(g)\| \leq 2 \kappa$ for all $g \in \cN_\Gamma(H)$.

\item Once we know that $\rL^\infty(Y) \prec \rL^\infty(X) \rtimes \Sigma$ for some $\Sigma \in \cS$, the unitary conjugacy of $\rL^\infty(X)$ and $\rL^\infty(X)$ follows from a combination of the regularity of $\rL^\infty(Y) \subset M$ and a weak malnormality of $\Sigma$ in $\Gamma$, see Lemma \ref{lem.malnormal} and  \cite[Proposition 8]{HPV10}.
\end{somop}

\subsection*{Acknowledgment}

It is my pleasure to thank Claire Anantharaman, Cyril Houdayer, Jesse Peterson, Sorin Popa and Thomas Sinclair for their helpful comments and careful reading of a first draft of this article.

\section{Popa's intertwining-by-bimodules}

We first recall Popa's intertwining-by-bimodules theorem.

\begin{theorem}[{\cite[Theorem 2.1 and Corollary 2.3]{Po03}}]\label{thm.intertwining}
Let $(M,\tau)$ be a von Neumann algebra with a faithful normal tracial state $\tau$. Assume that $p \in M$ is a projection and that $P \subset M$ and $B \subset pMp$ are von Neumann subalgebras with $B$ being generated by a group of unitaries $\cG \subset \cU(B)$. Then the following two statements are equivalent.
\begin{itemize}
\item There exists a nonzero partial isometry $v \in \M_{1,n}(\C) \ot p M$ and a, possibly non-unital, normal $*$-homomorphism $\theta : B \recht \M_n(\C) \ot P$ such that $b v = v \theta(b)$ for all $b \in B$.
\item There is no sequence of unitaries $(b_n)$ in $\cG$ satisfying $\|E_P(x b_n y)\|_2 \recht 0$ for all $x,y \in M$.
\end{itemize}
We write $B \prec P$ if these equivalent conditions hold.
\end{theorem}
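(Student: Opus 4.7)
\textbf{Direction (1) $\Rightarrow$ (2)} is the easy contrapositive. Given a nonzero partial isometry $v \in \M_{1,n}(\C) \ot pM$ and a normal $*$-homomorphism $\theta : B \recht \M_n(\C) \ot P$ with $bv = v\theta(b)$, multiplication by $v^*$ on the left gives $v^* b v = (v^*v)\,\theta(b)$. Applying $E_{\M_n(\C) \ot P}$, which commutes with right multiplication by $\theta(b) \in \M_n(\C) \ot P$, yields
\[
\bigl\|E_{\M_n(\C) \ot P}(v^*bv)\bigr\|_2 \;=\; \bigl\|E_{\M_n(\C) \ot P}(v^*v)\,\theta(b)\bigr\|_2 \;=\; \bigl\|E_{\M_n(\C) \ot P}(v^*v)\bigr\|_2 \;=:\; \delta > 0,
\]
a constant independent of $b \in \cG$ (the second equality uses that $\theta(b)$ is a unitary in the corner $\theta(1)(\M_n(\C) \ot P)\theta(1)$ under which $E_{\M_n(\C) \ot P}(v^*v) = E_{\M_n(\C) \ot P}(v^*v)\,\theta(1)$ already sits, together with cyclicity of the trace). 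Writing $v = \sum_i v_i \ot e_{1i}$ with $v_i \in pM$, the displayed identity rewrites as $\sum_{i,j} \|E_P(v_i^* b v_j)\|_2^2 = \delta^2$ for every $b \in \cG$, which rules out any sequence as in~(2).

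\textbf{Direction (2) $\Rightarrow$ (1)} is the substance, and my plan is the standard convex-compactness argument in the Jones basic construction $\langle M, e_P\rangle$ with its semifinite faithful normal trace $\Tr$ determined by $\Tr(x e_P y) = \tau(xy)$ for $x, y \in M$. Form
\[
\cK \;=\; \overline{\convex}^{\,\text{weak}}\bigl\{b\, e_P\, b^* : b \in \cG\bigr\} \;\subset\; p\,\langle M, e_P\rangle\, p,
\]
which sits inside a norm ball of $L^2(\langle M, e_P\rangle, \Tr)$ since each generator has $\Tr$-trace $\tau(p)$ and $L^2(\Tr)$-norm squared equal to $\tau(E_P(p)^2)$. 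The unique element $a \in \cK$ of minimal $\|\cdot\|_{2,\Tr}$-norm satisfies $b a b^* = a$ for all $b \in \cG$ by the $\cG$-invariance of $\cK$ under $\xi \mapsto b\xi b^*$, so $a$ commutes with $B$. Assumption~(2) enters precisely here: if $a = 0$, taking finite convex combinations of $b e_P b^*$ with $L^2(\Tr)$-norm tending to $0$ and using the trace identity $\Tr(x e_P y e_P) = \tau(x E_P(y))$ on a countable $\|\cdot\|_2$-dense subset of $M$ extracts a sequence $b_n \in \cG$ with $\|E_P(x b_n y)\|_2 \recht 0$ for every $x, y \in M$, contradicting~(2).

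From the nonzero positive $B$-central $a \in p\,\langle M, e_P\rangle\, p$ with $\Tr(a) < \infty$ I would pick a nonzero spectral projection $q$ of $a$ corresponding to an interval bounded away from $0$; then $q$ still commutes with $B$ and has $\Tr(q) < \infty$. Standard Jones-index theory realises any such finite-$\Tr$ projection as $q = \sum_{i,j} v_i\, e_P\, v_j^*$ for a row $v = (v_1,\ldots,v_n) \in \M_{1,n}(\C) \ot pM$ with $[v_j^* v_i]_{i,j}$ a projection in $\M_n(\C) \ot P$, and the centralization relation $bq = qb$ for $b \in B$ translates term-by-term into $bv = v\,\theta(b)$ for a normal $*$-homomorphism $\theta : B \recht \M_n(\C) \ot P$. \emph{The main obstacle is the nonvanishing of $a$: this is the one point where the quantitative form of~(2) is invoked, and it requires a careful transit between the weak topology on $\cK$ and the $L^2(\Tr)$-norm through the density of $\lspan\{x e_P y : x, y \in M\}$ in the basic construction; the final extraction of $v$ and $\theta$ from $q$ is then a routine, if slightly technical, exercise in the basic-construction dictionary.}
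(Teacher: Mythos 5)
The paper itself does not prove this statement; it is quoted verbatim from Popa's \cite{Po03}, so the only thing to assess is the proposal on its own. Your direction (1) $\Rightarrow$ (2) is correct and standard. The direction (2) $\Rightarrow$ (1), however, contains a genuine gap at exactly the place you flag as ``the one point where the quantitative form of (2) is invoked.''

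The problem is that the single convex set $\cK = \overline{\convex}^{\,w}\{b e_P b^* : b \in \cG\}$ only gives access to \emph{one-sided} quantities. If $c = \sum_k \lambda_k b_k e_P b_k^*$, then for the trace-class elements $T = x^* e_P x$ with $x \in M$ one computes
\[
\Tr\bigl(T\, c\bigr) \;=\; \sum_k \lambda_k\, \|E_P(x b_k)\|_2^2 ,
\]
and more generally $\Tr\bigl((x e_P y)\, c\bigr) = \sum_k \lambda_k\, \tau\bigl(E_P(b_k^* x) E_P(y b_k)\bigr)$, which is again a bilinear expression in one-sided terms $\|E_P(\cdot\, b_k)\|_2$. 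Nowhere in $\Tr(\cdot\, c)$ does one see the two-sided quantity $\|E_P(x b_k y)\|_2^2$ for generic $x,y \in M$. So from $a = 0$ you can extract a sequence $b_n \in \cG$ with $\|E_P(z b_n)\|_2 \recht 0$ for all $z$, but this does \emph{not} imply $\|E_P(x b_n y)\|_2 \recht 0$ for all $x,y$. The two conditions are genuinely different: take $M = \rL\F_2 = \rL\langle a,b\rangle$, $P = \rL\langle a\rangle$, $\cG = \{b a^k b^{-1} : k \in \Z\}$, and $b_n = b a^n b^{-1}$. Then $\|E_P(z b_n)\|_2 \recht 0$ for every $z \in M$ (for a word $z$, $b_n z$ eventually leaves $\langle a\rangle$), yet $E_P(b^{-1} b_n b) = a^n$ has $\|\cdot\|_2$-norm $1$ for all $n$. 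Here $B = b\,\rL\langle a\rangle\, b^{-1}$ does satisfy $B \prec P$, consistent with the theorem --- but it shows that a one-sided sequence need not be a two-sided one, so your extraction step does not yield the contradiction with (2) that you claim.

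The fix, which is what Popa actually does, is to run the minimum-norm argument over the whole family of convex sets
\[
\cK_F \;=\; \overline{\convex}^{\,w}\Bigl\{\sum_{y \in F} b\,(y e_P y^*)\,b^* \;:\; b \in \cG\Bigr\}, \qquad F \subset M \text{ finite},
\]
whose elements are still positive, of finite $\Tr$, and whose minimum-norm element $a_F$ still commutes with $B$. If $a_F \neq 0$ for \emph{some} $F$, the spectral-projection argument produces the intertwiner (the same basic-construction bookkeeping you sketch). Thus $\neg$(1) forces $a_F = 0$ for \emph{every} finite $F$. Pairing a norm-small convex combination in $\cK_F$ against $x^* e_P x$ now gives
\[
\Tr\bigl(x^* e_P x \cdot b\,(y e_P y^*)\, b^*\bigr) \;=\; \|E_P(x b y)\|_2^2 ,
\]
which is exactly the two-sided quantity; a diagonal argument over an increasing exhaustion by finite sets of pairs $(x,y)$ then produces the sequence contradicting (2). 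So the architecture of your proof is right and the $a \neq 0$ branch is fine, but you must invoke $\neg$(1), not merely $a = 0$, to kill all the $a_F$ simultaneously, and you must use the sets $\cK_F$ rather than $\cK$ alone to reach the two-sided conclusion. As written, the argument proves the weaker implication ``no sequence with $\|E_P(z b_n)\|_2 \recht 0$ for all $z$'' $\Rightarrow$ (1), which is not the same as (2) $\Rightarrow$ (1).
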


Note that when the von Neumann algebra $M$ is non separable, sequences have to be replaced by nets in the formulation of Theorem \ref{thm.intertwining}.

Throughout this section we fix a trace preserving action $\Gamma \actson (N,\tau)$ and denote $M = N \rtimes \Gamma$. We assume that $\cS$ is a family of subgroups of $\Gamma$. Following \cite[Definition 15.1.1]{BO08} we say that a subset $\cF \subset \Gamma$ is small relative to $\cS$ if $\cF$ can be written as a finite union of subsets of the form $g \Sigma h$, $g,h \in \Gamma, \Sigma \in \cS$. Whenever $\cF \subset \Gamma$ we denote by $P_\cF$ the orthogonal projection of $\rL^2(M)$ onto the closed linear span of $\{a u_g \mid a \in N, g \in \cF\}$.

\begin{definition}\label{def.almost}
Whenever $\cV \subset M$ is a norm bounded subset, we write $\cV \almost N \rtimes \cS$ if for every $\eps > 0$ there exists a subset $\cF \subset \Gamma$ that is small relative to $\cS$ such that
$$\|b - P_\cF(b)\|_2 = \|P_{\Gamma-\cF}(b)\|_2 \leq \eps \quad\text{for all}\;\; b \in \cV \; .$$
\end{definition}

\begin{lemma}\label{lem.almost}
Let $\cV \subset M$ be a norm bounded subset satisfying $\cV \almost N \rtimes \cS$. Then for all $x,y \in M$ we have $x \cV y \almost N \rtimes \cS$.

If $(v_i)$ is a bounded net in $M$ satisfying $\|P_\cF(v_i)\|_2 \recht 0$ for every subset $\cF \subset \Gamma$ that is small relative to $\cS$, then $\|P_\cF(x v_i y)\|_2 \recht 0$ for every $x,y \in M$ and every subset $\cF \subset \Gamma$ that is small relative to $\cS$.
\end{lemma}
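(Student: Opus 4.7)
The plan is to approximate $x, y \in M$ by finite Fourier truncations
\[
x_0 := P_{F_1}(x) = \sum_{g \in F_1} E_N(x u_g^*) u_g, \qquad y_0 := P_{F_2}(y) = \sum_{h \in F_2} E_N(y u_h^*) u_h
\]
with $F_1, F_2 \subset \Gamma$ finite. The key observation is a \emph{support shift}: for $b \in M$ with Fourier support inside $\cF \subset \Gamma$, the element $x_0 b y_0$ has Fourier support inside $F_1 \cF F_2$, because $u_g b u_h$ has Fourier support $g \cdot (\text{supp}\, b) \cdot h$. If $\cF = \bigcup_{i=1}^n g_i \Sigma_i h_i$ is small relative to $\cS$, then $F_1 \cF F_2 = \bigcup_{i,\, f_1 \in F_1,\, f_2 \in F_2} (f_1 g_i) \Sigma_i (h_i f_2)$ is again small relative to $\cS$. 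The only norm inequality needed throughout is $\|a m c\|_2 \le \|a\|_\infty \|m\|_2 \|c\|_\infty$.

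For the first assertion, fix $\eps > 0$ and set $K := \sup_{b \in \cV}\|b\|_\infty$. The telescoping identity
\[
xby - x_0 P_\cF(b) y_0 = (x-x_0)\, b\, y + x_0\, b\, (y-y_0) + x_0 (b - P_\cF(b)) y_0
\]
motivates choosing, in the following order: $F_1$ so that $\|x-x_0\|_2$ is small against $K\|y\|_\infty$; then, with $C_1 := \|x_0\|_\infty$ now fixed, a finite $F_2$ so that $\|y-y_0\|_2$ is small against $C_1 K$; finally, using the hypothesis $\cV \almost N \rtimes \cS$, a set $\cF \subset \Gamma$ small relative to $\cS$ for which $\|b - P_\cF(b)\|_2$ is small against $C_1 \|y_0\|_\infty$ uniformly in $b \in \cV$. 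The triangle inequality then yields $\|xby - x_0 P_\cF(b) y_0\|_2 < \eps$ for every $b \in \cV$. Since $x_0 P_\cF(b) y_0$ lies in the closed linear span of $\{a u_g : a \in N,\, g \in F_1 \cF F_2\}$, the orthogonal projection estimate gives $\|P_{\Gamma - F_1 \cF F_2}(xby)\|_2 < \eps$ uniformly in $b \in \cV$, proving $x \cV y \almost N \rtimes \cS$.

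The second assertion is dual. Fix $\cF \subset \Gamma$ small relative to $\cS$ and $\eta > 0$; set $K := \sup_i \|v_i\|_\infty$. Choose $F_1, F_2$ exactly as above so that $\|(x - x_0) v_i y\|_2 + \|x_0 v_i (y - y_0)\|_2 < \eta/2$ uniformly in $i$. Now put $\widetilde{\cF} := F_1^{-1} \cF F_2^{-1}$, which is again small relative to $\cS$ by the same argument as in the first paragraph. The support shift shows that if $v$ has Fourier support inside $\Gamma - \widetilde{\cF}$, then $x_0 v y_0$ has Fourier support disjoint from $\cF$ (indeed $f_1 k f_2 \in \cF$ with $f_i \in F_i$ forces $k \in \widetilde{\cF}$). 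Hence $P_\cF(x_0 v_i y_0) = P_\cF(x_0 P_{\widetilde{\cF}}(v_i) y_0)$ and
\[
\|P_\cF(x_0 v_i y_0)\|_2 \leq C_1 \,\|P_{\widetilde{\cF}}(v_i)\|_2\, \|y_0\|_\infty \recht 0
\]
by the hypothesis applied to the small set $\widetilde{\cF}$. Combining, $\limsup_i \|P_\cF(x v_i y)\|_2 \leq \eta/2$, and since $\eta$ was arbitrary we obtain the claim.

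The only delicate point in the whole argument is the order of the choices: the cut-off $x_0$ has operator norm bounded only by $|F_1| \cdot \|x\|_\infty$, so $F_1$ (and hence $C_1$) must be fixed before choosing $F_2$, which in turn must be fixed before invoking the hypothesis to produce $\cF$ (respectively $\widetilde{\cF}$). Apart from this bookkeeping the proof is purely formal.
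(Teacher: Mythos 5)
Your proof is correct and takes essentially the same approach as the paper: truncate $x,y$ by finite Fourier sums and exploit the support-shift observation that conjugating a small-relative-to-$\cS$ set by finitely many group elements stays small relative to $\cS$, with care about the order in which the parameters are chosen since the operator norm of a Fourier truncation is not controlled by that of the original element. The paper streamlines the bookkeeping slightly — for the first claim it truncates only $x$ and invokes left/right symmetry, and for the second it uses Kaplansky density to reduce directly to $x=u_g$, $y=u_h$ where $P_\cF(u_g v_i u_h)=u_g P_{g^{-1}\cF h^{-1}}(v_i)u_h$ — but the substance is identical to yours.
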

\begin{proof}
To prove the first statement, by symmetry it suffices to show that $x \cV \almost N \rtimes \cS$. We may assume that $x \in (M)_1$ and $\cV \subset (M)_1$. Choose $\eps > 0$. Take a finite subset $\cF_0 \subset \Gamma$ and elements $(a_g)_{g \in \cF_0}$ of $N$ such that $x_0 := \sum_{g \in \cF_0} a_g u_g$ satisfies $\|x-x_0\|_2 \leq \eps/2$. Put $\kappa = \max\{\|a_g\| \mid g \in \cF_0\}$. Take a subset $\cF \subset \Gamma$ that is small relative to $\cS$ and such that $\|P_{\Gamma-\cF}(b)\|_2 \leq \eps/(2|\cF_0| \kappa)$ for all $b \in \cV$. Define the subset $\cF_1 := \cF_0 \cF$ and note that $\cF_1$ is small relative to $\cS$. We claim that $\|P_{\Gamma-\cF_1}(x b)\|_2 \leq \eps$ for all $b \in \cV$. To prove this claim, fix $b \in \cV$. Since $\|b\| \leq 1$, we have $\|xb-x_0 b\|_2 \leq \|x-x_0\|_2 \leq \eps/2$. So it suffices to prove that $\|P_{\Gamma-\cF_1}(x_0 b)\|_2 \leq \eps/2$. But,
$$P_{\Gamma-\cF_1}(x_0 b) = \sum_{g \in \cF_0} a_g P_{\Gamma - \cF_1}(u_g b)$$
so that
$$\|P_{\Gamma-\cF_1}(x_0 b)\|_2 \leq \sum_{g \in \cF_0} \|a_g\| \; \|P_{\Gamma-\cF_1}(u_g b)\|_2 \leq \kappa \sum_{g \in \cF_0}  \|P_{\Gamma-\cF}(b)\|_2 \leq \frac{\eps}{2}\; .$$
This proves the claim and hence also the first statement in the lemma.

To prove the second statement, we can approximate $x$ and $y$ by linear combinations of $a u_g$, $a \in N$, $g \in \Gamma$. Using the Kaplansky density theorem we may assume that $x = u_g$ and $y = u_h$. But then $P_\cF(u_g v_i u_h) = P_{g^{-1} \cF h^{-1}}(v_i)$ and we are done.
\end{proof}

The following lemma is essentially contained in \cite{Po03} and \cite[Remark 3.3]{Va07}. To keep this paper self-contained we provide a short proof. The only reason that nets appear, is because we do not want to make the restriction that the family $\cS$ is countable.

\begin{lemma}\label{lem.intertwining}
Let $p \in M$ be a projection and $B \subset pMp$ a von Neumann subalgebra generated by a group of unitaries $\cG \subset \cU(B)$. Then the
following two statements are equivalent.
\begin{itemize}
\item For every $\Sigma \in \cS$ we have $B \not\prec N \rtimes \Sigma$.
\item There exists a net of unitaries $(w_i)$ in $\cG$ such that $\|P_\cF(w_i)\|_2 \recht 0$ for every subset $\cF \subset \Gamma$ that is small relative to $\cS$.
\end{itemize}
\end{lemma}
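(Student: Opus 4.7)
The reverse implication (from the existence of the net to non-intertwining) is immediate from Lemma \ref{lem.almost}: each $\Sigma \in \cS$ is itself small relative to $\cS$ and the conditional expectation $E_{N\rtimes\Sigma}$ agrees with $P_\Sigma$ on $\rL^2(M)$, so $\|P_\cF(w_i)\|_2 \recht 0$ for every small $\cF$ implies $\|E_{N\rtimes\Sigma}(xw_iy)\|_2 \recht 0$ for all $x,y \in M$, whence $B \not\prec N\rtimes\Sigma$ by Theorem \ref{thm.intertwining}. For the forward direction, a standard directed-set construction indexed by pairs $(\cF,\eps)$ with $\cF$ small and $\eps > 0$ (ordered by inclusion on $\cF$ and reverse order on $\eps$, with common upper bounds $(\cF_1\cup\cF_2, \min(\eps_1,\eps_2))$) reduces matters to the local statement: for every such $\cF = \bigcup_{k=1}^n g_k\Sigma_kh_k$ and $\eps > 0$ there exists $w \in \cG$ with $\|P_\cF(w)\|_2 < \eps$. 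Using orthogonality of the Fourier components $\{N u_g\}_{g \in \Gamma}$ and trace-preservation of the translations $x \mapsto u_{g_k}^* x u_{h_k}^*$, one obtains
\[
\|P_\cF(w)\|_2^2 \;\leq\; \sum_{k=1}^n \|P_{g_k \Sigma_k h_k}(w)\|_2^2 \;=\; \sum_{k=1}^n \|E_{N\rtimes\Sigma_k}(u_{g_k}^* w u_{h_k}^*)\|_2^2 ,
\]
so it suffices to make the right-hand side small by a single $w \in \cG$.

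To combine the $n$ individual hypotheses $B \not\prec N\rtimes\Sigma_k$ into such a $w$, I use an amplification. Put $\Mtil := M^{\oplus n}$ with trace $\widetilde{\tau} := \frac{1}{n}\sum_k \tau_k$, set $\widetilde{p} := (p,\ldots,p)$, and diagonally embed $B$ as $\widetilde{B} := \{(b,\ldots,b) \mid b \in B\} \subset \widetilde{p}\Mtil\widetilde{p}$, generated by $\widetilde{\cG} := \{(b,\ldots,b) \mid b \in \cG\}$; let $\widetilde{Q} := \bigoplus_{k=1}^n (N \rtimes \Sigma_k) \subset \Mtil$, a unital inclusion. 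The key claim is $\widetilde{B} \not\prec_{\Mtil} \widetilde{Q}$. Indeed, if the contrary held, Theorem \ref{thm.intertwining} would provide a nonzero partial isometry $\widetilde{v} \in \M_{1,m}(\C) \ot \widetilde{p}\Mtil$ and a $*$-homomorphism $\widetilde{\theta} : \widetilde{B} \recht \M_m(\C) \ot \widetilde{Q}$ with $\widetilde{b}\widetilde{v} = \widetilde{v}\widetilde{\theta}(\widetilde{b})$ for all $\widetilde{b} \in \widetilde{B}$. Decomposing along the minimal central projections $\widetilde{e}_k$ of $\Mtil$ (which are also central in $\widetilde{Q}$) writes $\widetilde{v} = (v_1,\ldots,v_n)$ with each $v_k \in \M_{1,m}(\C) \ot pM$ a partial isometry, and $\widetilde{\theta} = \bigoplus_k \theta_k$ with $\theta_k : B \recht \M_m(\C) \ot (N \rtimes \Sigma_k)$; the intertwining restricts to $b v_k = v_k \theta_k(b)$ for all $b \in B$ and each $k$. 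Since $\widetilde{v} \neq 0$, at least one $v_k$ is nonzero, giving $B \prec_M N \rtimes \Sigma_k$ and contradicting the hypothesis. Applying Theorem \ref{thm.intertwining} to the now-established $\widetilde{B} \not\prec \widetilde{Q}$ yields a net $\widetilde{b}_j = (b_j,\ldots,b_j) \in \widetilde{\cG}$ with $\|E_{\widetilde{Q}}(\widetilde{x}\widetilde{b}_j\widetilde{y})\|_2 \recht 0$ for all $\widetilde{x}, \widetilde{y} \in \Mtil$; evaluating at $\widetilde{x} = (u_{g_1}^*,\ldots,u_{g_n}^*)$ and $\widetilde{y} = (u_{h_1}^*,\ldots,u_{h_n}^*)$ gives $\sum_k \|E_{N\rtimes\Sigma_k}(u_{g_k}^* b_j u_{h_k}^*)\|_2^2 \recht 0$, and a sufficiently large $j$ provides $w := b_j \in \cG$ with $\|P_\cF(w)\|_2 < \eps$.

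The only real conceptual obstacle is the need to control $n$ different conditional expectations $E_{N\rtimes\Sigma_k}$ simultaneously by a single unitary from $\cG$; the amplification to $\Mtil = M^{\oplus n}$ turns this into a single application of Popa's intertwining theorem in an enlarged ambient algebra, cleanly bypassing any diagonalization or ultrapower construction. The routine verifications --- the Fourier-coefficient bound on $\|P_\cF(w)\|_2^2$, the fact that the $\widetilde{e}_k$ are central in $\widetilde{Q}$, and the directed-set construction producing the global net from the local statement --- are all straightforward.
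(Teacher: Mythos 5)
Your proof is correct and uses essentially the same strategy as the paper: for the forward direction you amplify to a direct sum $\bigoplus_k N\rtimes\Sigma_k$ sitting inside a larger algebra (you use $M^{\oplus n}$, the paper uses $\M_n(\C)\ot M$, but these give the same thing) and apply Theorem \ref{thm.intertwining} once, and for the reverse direction you combine Lemma \ref{lem.almost} with Theorem \ref{thm.intertwining}. The only cosmetic differences are that your reverse direction invokes the second criterion directly (where the paper produces a partial isometry and derives a contradiction), and that you spell out the decomposition of the intertwiner along central projections, which the paper compresses into the one-line remark that $B\not\prec N\rtimes\Sigma_k$ for all $k$ forces $B\not\prec P$.
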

\begin{proof}
If $\Sigma \in \cS$ and $B \prec N \rtimes \Sigma$, Theorem \ref{thm.intertwining} yields a nonzero partial isometry $v \in \M_{1,n}(\C) \ot pM$ and a normal $*$-homomorphism $\theta : B \recht \M_n(\C) \ot (N \rtimes \Sigma)$ such that $b v = v \theta(b)$ for all $b \in B$. If $(w_i)$ would be a net as in the second statement, Lemma \ref{lem.almost} implies that $\|(1 \ot P_\cF)(v^* w_i v)\|_2 \recht 0$ for every subset $\cF \subset \Gamma$ that is small relative to $\cS$. This holds in particular for $\cF = \Sigma$ so that
$$\|(\id \ot E_{N \rtimes \Sigma})(v^* v)\|_2 = \|\theta(w_i)\; (\id \ot E_{N \rtimes \Sigma})(v^* v)\|_2 = \|(\id \ot E_{N \rtimes \Sigma})(v^* w_i v)\|_2 \recht 0 \; .$$
We arrive at the contradiction that $v = 0$.

Conversely assume that for every $\Sigma \in \cS$ we have $B \not\prec N \rtimes \Sigma$. Let $\cF \subset \Gamma$ be a subset that is small relative to $\cS$ and let $\eps > 0$. We have to prove the existence of $w \in \cG$ such that $\|P_\cF(w)\|_2 < \eps$. Write $\cF = \bigcup_{k=1}^n g_k \Sigma_k h_k$ with $\Sigma_k \in \cS$ and $g_k,h_k \in \Gamma$. Consider in $\M_n(\C) \ot M$ the diagonal subalgebra $P:= \bigoplus_k N \rtimes \Sigma_k$. Since for every $k =1,\ldots,n$ we have $B \not\prec N \rtimes \Sigma_k$, the first criterion of Theorem \ref{thm.intertwining} implies that $B \not\prec P$. Then the second criterion in Theorem \ref{thm.intertwining} provides a sequence of unitaries $w_i \in \cG$ such that
$$\|E_{N \rtimes \Sigma_k}(x w_i y)\|_2 \recht 0 \quad\text{for all}\;\; x,y \in M, k=1,\ldots,n \; .$$
Applying this to $x = u_{g_k}^*$ and $y = u_{h_k}^*$, this means that $\|P_{g_k \Sigma_k h_k}(w_i)\|_2 \recht 0$. Hence $\|P_\cF(w_i)\|_2 \recht 0$ and it suffices to take $w = w_i$ for $i$ sufficiently large.
\end{proof}

The following lemma clarifies the relation between the approximate containment $\almost$ and the intertwining relation $\prec$.

\begin{lemma}\label{lem.approx-vs-prec}
Let $p \in M$ be a projection and $B \subset pMp$ a von Neumann subalgebra. The following two statements are equivalent.
\begin{enumerate}
\item There exists a $\Sigma \in \cS$ such that $B \prec N \rtimes \Sigma$.\label{stat.1}
\item There exists a nonzero projection $q \in B' \cap pMp$ such that $(Bq)_1 \almost N \rtimes \cS$.\label{stat.2}
\end{enumerate}
Also the following two statements are equivalent.
\begin{enumerate}\renewcommand{\theenumi}{\alph{enumi}}
\item For every nonzero projection $q \in B' \cap pMp$ there exists a $\Sigma \in \cS$ such that $Bq \prec N \rtimes \Sigma$.\label{stat.a}
\item We have $(B)_1 \almost N \rtimes \cS$.\label{stat.b}
\end{enumerate}
\end{lemma}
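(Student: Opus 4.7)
The first equivalence. For $(\ref{stat.1})\Rightarrow(\ref{stat.2})$ I would apply Theorem~\ref{thm.intertwining} to produce $v\in\M_{1,n}(\C)\otimes pM$ and a $*$-homomorphism $\theta:B\recht\M_n(\C)\otimes(N\rtimes\Sigma)$ with $bv=v\theta(b)$. Taking adjoints yields the companion relation $v^*b=\theta(b)v^*$, from which $bvv^*=v\theta(b)v^*=vv^*b$, so $q:=vv^*$ is a nonzero projection in $B'\cap pMp$. For $b\in(B)_1$ we have $bq=v\theta(b)v^*$ with $\theta(b)\in(\M_n(\C)\otimes(N\rtimes\Sigma))_1$, a set trivially supported on $\Sigma$; Lemma~\ref{lem.almost}, applied inside $\M_n(\C)\otimes M$ to this norm-bounded set conjugated by the fixed element $v$, then gives $(Bq)_1\almost N\rtimes\cS$. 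For the reverse $(\ref{stat.2})\Rightarrow(\ref{stat.1})$ I would first note that an intertwiner for $Bq\subset qMq$ into $N\rtimes\Sigma$ is automatically one for $B\subset pMp$, so it suffices to prove $Bq\prec N\rtimes\Sigma$ for some $\Sigma\in\cS$. Arguing by contradiction, Lemma~\ref{lem.intertwining} applied to $Bq\subset qMq$ produces a net of unitaries $w_i\in\cU(Bq)$ with $\|P_\cF(w_i)\|_2\recht 0$ for every subset $\cF\subset\Gamma$ small relative to $\cS$; but $\|w_i\|_2=\tau(q)^{1/2}$, and the hypothesis provides, for any $\eps<\tau(q)^{1/2}$, a fixed $\cF$ with $\|P_{\Gamma-\cF}(w_i)\|_2\leq\eps$, forcing $\|P_\cF(w_i)\|_2^2\geq\tau(q)-\eps^2>0$ for every $i$, a contradiction.

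The second equivalence. $(\ref{stat.b})\Rightarrow(\ref{stat.a})$ is quick: given a nonzero $q\in B'\cap pMp$, the map $b\mapsto bq$ is a surjective $*$-homomorphism $B\recht Bq$ and thus sends $(B)_1$ onto $(Bq)_1$; combining $(\ref{stat.b})$ with Lemma~\ref{lem.almost} gives $(Bq)_1\almost N\rtimes\cS$, and $(\ref{stat.2})\Rightarrow(\ref{stat.1})$ furnishes the desired $\Sigma$. The reverse $(\ref{stat.a})\Rightarrow(\ref{stat.b})$ is a maximality argument. Using Zorn's lemma, choose a maximal family of pairwise orthogonal projections $\{q_i\}\subset B'\cap pMp$ with $(Bq_i)_1\almost N\rtimes\cS$, and set $Q=\sum_i q_i\in B'\cap pMp$. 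If $Q<p$, then $p-Q\in B'\cap pMp$ is nonzero, so by $(\ref{stat.a})$ we have $B(p-Q)\prec N\rtimes\Sigma$ for some $\Sigma$; applying $(\ref{stat.1})\Rightarrow(\ref{stat.2})$ inside the corner $(p-Q)M(p-Q)$ produces a nonzero projection $q'\leq p-Q$ with $q'\in(B(p-Q))'$ and $(B(p-Q)q')_1\almost N\rtimes\cS$. Because $p-Q\in B'$ and $q'(p-Q)=q'=(p-Q)q'$, for every $b\in B$ one has $bq'=b(p-Q)q'=q'b(p-Q)=q'b$, so $q'\in B'\cap pMp$ and $Bq'=B(p-Q)q'$ satisfies $(Bq')_1\almost N\rtimes\cS$; adjoining $q'$ to the family contradicts maximality, hence $Q=p$. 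Since the trace is finite, only countably many $q_i$ are nonzero. Given $\eps>0$, pick $k$ with $\|p-Q_k\|_2\leq\eps/2$ where $Q_k=\sum_{i=1}^k q_i$, and for each $i\leq k$ a set $\cF_i$ small relative to $\cS$ with $\|P_{\Gamma-\cF_i}(b')\|_2\leq\eps/(2k)$ for $b'\in(Bq_i)_1$. Then $\cF=\bigcup_{i=1}^k\cF_i$ is still small relative to $\cS$, and for $b\in(B)_1$ the estimate $\|P_{\Gamma-\cF}(b)\|_2\leq\|b-bQ_k\|_2+\sum_i\|P_{\Gamma-\cF_i}(bq_i)\|_2\leq\eps/2+\eps/2$ establishes $(\ref{stat.b})$.

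The main technical point is ensuring that the projection $q'$ obtained in the Zorn step really lies in $B'\cap pMp$: the application of $(\ref{stat.1})\Rightarrow(\ref{stat.2})$ inside $(p-Q)M(p-Q)$ a priori only places $q'$ in the relative commutant $(B(p-Q))'\cap(p-Q)M(p-Q)$, and upgrading this to commutation with the full $B$ relies crucially on $p-Q$ itself belonging to $B'$, so that $p-Q$ can be absorbed on either side of $q'$ in the computation $bq'=b(p-Q)q'=q'b(p-Q)=q'b$. The rest of the argument is routine bookkeeping with Lemmas~\ref{lem.almost} and \ref{lem.intertwining}.
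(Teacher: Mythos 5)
Your proof is correct and follows essentially the same approach as the paper: the same invocations of Theorem~\ref{thm.intertwining}, Lemma~\ref{lem.almost} and Lemma~\ref{lem.intertwining}, and the same maximal-orthogonal-family argument for \ref{stat.a}~$\Rightarrow$~\ref{stat.b} that the paper compresses into one sentence. The only minor variation is in \ref{stat.2}~$\Rightarrow$~\ref{stat.1}, where you apply Lemma~\ref{lem.intertwining} directly to $Bq \subset qMq$ rather than (as the paper does) producing the net from $B$ and then cutting by $q$ via Lemma~\ref{lem.almost}; both are valid, and your explicit check that the $q'$ in the Zorn step lies in $B' \cap pMp$ is a worthwhile detail the paper leaves implicit.
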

\begin{proof}
\ref{stat.1} $\Rightarrow$ \ref{stat.2}. Take $\Sigma \in \cS$ such that $B \prec N \rtimes \Sigma$. Theorem \ref{thm.intertwining} provides a nonzero partial isometry $v \in \M_{1,n}(\C) \ot p M$ such that $q := v v^*$ belongs to $B' \cap pMp$ and satisfies $B q \subset v (\M_n(\C) \ot (N \rtimes \Sigma)) v^*$. It follows from Lemma \ref{lem.almost} that $(Bq)_1 \almost N \rtimes \cS$.

$\neg$ \ref{stat.1} $\Rightarrow$ $\neg$ \ref{stat.2}. We assume that for all $\Sigma \in \cS$ we have $B \not\prec N \rtimes \Sigma$. Then Lemma \ref{lem.intertwining} provides a net of unitaries $(w_i)$ in $\cU(B)$ such that $\|P_\cF(w_i)\|_2 \recht 0$ for every subset $\cF \subset \Gamma$ that is small relative to $\cS$. Take a nonzero projection $q \in B' \cap pMp$. We conclude from Lemma \ref{lem.almost} that $\|P_\cF(w_i q)\|_2 \recht 0$ for every subset $\cF \subset \Gamma$ that is small relative to $\cS$. Since $w_i q \in (Bq)_1$ and $\|w_i q\|_2 = \|q\|_2 > 0$ for all $i$, we see that $(Bq)_1$ is not approximately contained in $N \rtimes \cS$.

\ref{stat.a} $\Rightarrow$ \ref{stat.b}. By the assumption in statement~\ref{stat.a} and using the already proven implication \ref{stat.1}~$\Rightarrow$~\ref{stat.2}, we can take an orthogonal family of projections $q_i \in B' \cap pMp$ such that $\sum_i q_i = p$ and $(B q_i)_1 \almost N \rtimes \cS$ for all $i$. It follows that $(B)_1 \almost N \rtimes \cS$.

\ref{stat.b} $\Rightarrow$ \ref{stat.a}. Assume that $(B)_1 \almost N \rtimes \cS$ and let $q \in B' \cap pMp$ be a nonzero projection. We conclude from Lemma \ref{lem.almost} that $(Bq)_1 \almost N \rtimes \cS$. The already proven implication \ref{stat.2}~$\Rightarrow$~\ref{stat.1} implies that there exists a $\Sigma \in \cS$ such that $Bq \prec N \rtimes \Sigma$.
\end{proof}

Whenever $p \in M$ is a projection and $B \subset pMp$ is a von Neumann subalgebra we find as follows a unique projection $q \in B' \cap pMp$ such that $(B q)_1 \almost N \rtimes \cS$ and such that $B(p-q) \not\prec N \rtimes \Sigma$ for all $\Sigma \in \cS$.

\begin{proposition}\label{prop.maxintertwine}
Let $p \in M$ be a projection and $B \subset pMp$ a von Neumann subalgebra generated by a group of unitaries $\cG \subset \cU(B)$. Denote by $Q$ the normalizer of $B$ inside $pMp$.
The set of projections
$$\cP := \{q_0 \mid q_0 \;\;\text{is a projection in}\;\; B' \cap pMp \;\;\text{and}\;\; (Bq_0)_1 \almost N \rtimes \cS \}$$
attains its maximum in a unique projection $q \in \cP$. This projection $q$ belongs to $\cZ(Q)$.

Moreover there exists a net of unitaries $(w_i)$ in $\cG$ such that $\|P_\cF(w_i(p-q))\|_2 \recht 0$ for every subset $\cF \subset \Gamma$ that is small relative to $\cS$.
\end{proposition}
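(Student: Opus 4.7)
The plan is to identify the maximum of $\cP$ as a directed supremum, derive the centrality claim from invariance of $\cP$ under conjugation by normalizing unitaries, and finally apply Lemma \ref{lem.intertwining} to the cutdown $B(p-q)$.

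First I would verify that $\cP$ is closed under finite joins and under directed suprema. For joins, write $q_1 \vee q_2 = q_1 + (q_1 \vee q_2 - q_1)$ as an orthogonal sum and observe that $q_1 \vee q_2 - q_1$ is Murray--von Neumann equivalent in $B' \cap pMp$ to the subprojection $q_2 - q_1 \wedge q_2 \leq q_2$, via a partial isometry $v \in B' \cap pMp$. Since $(B(q_2 - q_1 \wedge q_2))_1 \subset (Bq_2)_1 \almost N \rtimes \cS$, conjugating by $v$ and invoking Lemma \ref{lem.almost} gives $(B(q_1 \vee q_2 - q_1))_1 = v (B(q_2 - q_1 \wedge q_2))_1 v^* \almost N \rtimes \cS$; combining with the corresponding statement for $q_1$ through orthogonality yields $q_1 \vee q_2 \in \cP$. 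For an increasing net $(q_\alpha) \subset \cP$ with supremum $q$ one has $\|q - q_\alpha\|_2 \to 0$; given $\eps > 0$, pick $\alpha$ with $\|q - q_\alpha\|_2 < \eps/2$ and a small set $\cF$ witnessing $q_\alpha \in \cP$ at tolerance $\eps/2$, then for $b \in (Bq)_1$ the splitting $b = b q_\alpha + b(q - q_\alpha)$ yields $\|P_{\Gamma - \cF}(b)\|_2 \leq \eps$. Hence $\cP$ is upward-directed and closed under its suprema, and $q := \sup \cP$ is the unique maximum.

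For the centrality claim, fix a normalizing unitary $u \in \cN_{pMp}(B)$. Then $uBu^* = B$, so $uqu^* \in B' \cap pMp$, and Lemma \ref{lem.almost} gives $(B \, uqu^*)_1 = u (Bq)_1 u^* \almost N \rtimes \cS$. Thus $uqu^* \in \cP$, and maximality forces $uqu^* \leq q$; applied also to $u^*$, this yields $uqu^* = q$, so $q$ commutes with $\cN_{pMp}(B)$ and therefore with $Q$. Moreover $B' \cap pMp \subset Q$, since every unitary in $B' \cap pMp$ normalizes $B$ trivially and $B' \cap pMp$ is generated by its unitaries; in particular $q \in Q$, and we conclude $q \in Q \cap Q' = \cZ(Q)$.

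For the ``moreover'' statement, I claim $B(p-q) \not\prec N \rtimes \Sigma$ for every $\Sigma \in \cS$. Otherwise Lemma \ref{lem.approx-vs-prec}, applied inside $(p-q)M(p-q)$, would yield a nonzero projection $q'' \in (B(p-q))' \cap (p-q)M(p-q)$ with $(Bq'')_1 \almost N \rtimes \cS$; a short calculation using $p - q \in B' \cap pMp$ shows $q''$ commutes with $B$, so $q'' \in B' \cap pMp$, and then $q + q''$ is an element of $\cP$ strictly exceeding $q$, contradicting maximality. Noting that $\cG(p-q) := \{u(p-q) \mid u \in \cG\}$ is a group of unitaries in $(p-q)M(p-q)$ generating $B(p-q)$---using $u(p-q)u^* = upu^* - uqu^* = p - q$ from $u \in \cU(pMp)$ and $q \in B'$---Lemma \ref{lem.intertwining} applied to $B(p-q) \subset (p-q)M(p-q) \subset M$ with this generating set produces a net $(u_i)$ in $\cG$ such that $\|P_\cF(u_i(p-q))\|_2 \to 0$ for every $\cF$ small relative to $\cS$. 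The main technical hurdle is the Murray--von Neumann equivalence argument ensuring closure of $\cP$ under binary joins; once this is in place, the rest follows by standard maximality and Lemma \ref{lem.intertwining}.
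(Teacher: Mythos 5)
Your proof is correct, and the centrality argument and the ``moreover'' part (reduce via Lemma \ref{lem.approx-vs-prec} to $B(p-q) \not\prec N \rtimes \Sigma$ for all $\Sigma \in \cS$, then invoke Lemma \ref{lem.intertwining}) match the paper's proof. Where you diverge is in establishing that $\cP$ has a maximum. You argue lattice-theoretically: closure of $\cP$ under binary joins is obtained from the parallelogram law $q_1 \vee q_2 - q_1 \sim q_2 - q_1 \wedge q_2$ in $B' \cap pMp$, whose implementing partial isometry $v$ commutes with $B$ and hence transports the $\almost N \rtimes \cS$ condition via Lemma \ref{lem.almost}; closure under suprema of increasing nets follows from $\|q - q_\alpha\|_2 \to 0$ in a tracial von Neumann algebra; Zorn's lemma plus directedness then yields the maximum. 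The paper instead takes a maximal orthogonal family of nonzero projections in $\cP$, sums it to get $q$, and shows $q$ is the maximum by a direct contradiction: if $q' \in \cP$ is not $\leq q$, then $T := (p-q)q'(p-q)$ is nonzero, a suitable spectral projection $q'' = TS$ of $T$ is orthogonal to $q$, and the identity $(B)_1 q'' = (p-q)(B)_1 q' (p-q)S$ plus Lemma \ref{lem.almost} places $q''$ in $\cP$, contradicting maximality of the family. Both routes rely only on Lemma \ref{lem.almost} and standard projection manipulations; yours is slightly more structural (it records that $\cP$ is a sublattice-ideal of the projection lattice, closed under directed sups), while the paper's avoids invoking the comparison theory of projections. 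Either proof is complete and correct.
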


\begin{proof}
Let $q_k \in B' \cap pMp$ be a maximal orthonormal sequence of nonzero projections in $\cP$. Define $q = \sum_k q_k$. It is easy to prove that $q \in \cP$. We prove that $q$ is the maximum of $\cP$.
Assume that $q' \in \cP$ and that $q' \not\leq q$. It follows that $T:= (p-q) q' (p-q)$ is a nonzero operator in $B' \cap pMp$. Take $S \in B' \cap pMp$ such that $TS$ is a nonzero spectral projection $q^{\prime\prime}$ of $T$. Note that the projection $q^{\prime\prime}$ is orthogonal to $q$ and that the formula
$$(B)_1 q^{\prime\prime} = (p-q) \; (B)_1 q' \; (p-q)S \; ,$$
together with Lemma \ref{lem.almost} implies that $(B q^{\prime\prime})_1 \almost N \rtimes \cS$. This contradicts the maximality of the sequence $(q_k)$. So, $\cP$ attains its maximum in $q \in \cP$.

If $u \in \cN_{pMp}(B)$, the fact that $(B)_1 q \almost N \rtimes \cS$ and Lemma \ref{lem.almost} imply that $u (B)_1 q u^* \almost N \rtimes \cS$. But $u (B)_1 q u^* = (B)_1 u q u^*$ and it follows that $uqu^* \leq q$. Hence, $q$ commutes with $Q$. In particular, $q$ commutes with $B$ so that $q \in Q$. Hence, $q \in \cZ(Q)$.

By Lemma \ref{lem.intertwining} it remains to prove that for all $\Sigma \in \cS$ we have $B(p-q) \not\prec N \rtimes \Sigma$. If the contrary would be true, the implication \ref{stat.1}~$\Rightarrow$~\ref{stat.2} in Lemma \ref{lem.approx-vs-prec} provides a nonzero projection $q' \in B' \cap pMp$ such that $q' \leq p-q$ and such that $(Bq')_1 \almost N \rtimes \cS$. This contradicts the maximality of $q$.
\end{proof}

\begin{lemma}\label{lem.intersection}
Assume that $\cS_1$ and $\cS_2$ are two families of subgroups of $\Gamma$. Define the family $\cS$ consisting of the subgroups $\Sigma_1 \cap g \Sigma_2 g^{-1}$, $\Sigma_i \in \cS_i$, $g \in \Gamma$. Let $B \subset pMp$ be a von Neumann subalgebra. If $(B)_1 \almost N \rtimes \cS_i$ for $i=1,2$, then $(B)_1 \almost N \rtimes \cS$.
\end{lemma}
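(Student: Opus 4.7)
The plan is to fix $\eps > 0$, use the two hypotheses to find sets $\cF_1, \cF_2 \subset \Gamma$ small relative to $\cS_1$ and $\cS_2$ respectively that control $\|P_{\Gamma - \cF_i}(b)\|_2$ for $b \in (B)_1$, and then show that the intersection $\cF := \cF_1 \cap \cF_2$ is small relative to $\cS$ and witnesses the approximation. The only nontrivial point is the intersection step.

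First I would establish the combinatorial lemma that for any $\Sigma \in \cS_1$, $\Sigma' \in \cS_2$ and $g,h,g',h' \in \Gamma$ the intersection $g\Sigma h \cap g'\Sigma' h'$ is either empty or a single coset of a subgroup in $\cS$. Indeed, if an element $\sigma_0$ lies in $\Sigma \cap (g^{-1}g')\Sigma'(h'h^{-1})$, writing $a := g^{-1}g'$ one checks directly that
\[
\Sigma \cap a\Sigma'(h'h^{-1}) \;=\; \bigl(\Sigma \cap a \Sigma' a^{-1}\bigr) \sigma_0 \; ,
\]
so that $g\Sigma h \cap g'\Sigma' h' = g \bigl(\Sigma \cap a\Sigma' a^{-1}\bigr) \sigma_0 h$, which is of the form $g_0 \, \Sigma_0 \, h_0$ with $\Sigma_0 = \Sigma \cap a\Sigma' a^{-1} \in \cS$. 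Taking finite unions, it follows that whenever $\cF_1$ is small relative to $\cS_1$ and $\cF_2$ is small relative to $\cS_2$, the intersection $\cF_1 \cap \cF_2$ is small relative to $\cS$.

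Now given $\eps > 0$, choose $\cF_i \subset \Gamma$ small relative to $\cS_i$ with $\|P_{\Gamma-\cF_i}(b)\|_2 \leq \eps/2$ for all $b \in (B)_1$, and put $\cF := \cF_1 \cap \cF_2$. Since $\Gamma - \cF = (\Gamma-\cF_1) \cup (\Gamma-\cF_2)$ and the orthogonal projection $P_{A \cup B}$ on the closed linear span of $\{au_g \mid a \in N, g \in A \cup B\}$ satisfies $\|P_{A\cup B}(b)\|_2 \leq \|P_A(b)\|_2 + \|P_B(b)\|_2$, we conclude
\[
\|P_{\Gamma-\cF}(b)\|_2 \leq \|P_{\Gamma-\cF_1}(b)\|_2 + \|P_{\Gamma-\cF_2}(b)\|_2 \leq \eps \quad \text{for all } b \in (B)_1 \; ,
\]
which together with the combinatorial lemma gives $(B)_1 \almost N \rtimes \cS$.

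The only step requiring care is the combinatorial identity expressing a double intersection of small sets as a small set relative to $\cS$; everything else is a direct application of the triangle inequality and the definitions.
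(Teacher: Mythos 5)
Your proof is correct and follows essentially the same route as the paper's, which simply observes that $\cF_1 \cap \cF_2$ is small relative to $\cS$ and that $P_{\cF_1 \cap \cF_2} = P_{\cF_1} \circ P_{\cF_2}$, then lets the reader supply the routine estimate. You have supplied the estimate (via $\Gamma - (\cF_1 \cap \cF_2) = (\Gamma-\cF_1) \cup (\Gamma-\cF_2)$ and the triangle inequality, equivalent to the paper's composition-of-projections identity) and, more usefully, spelled out the combinatorial fact that the paper leaves implicit: the verification that a nonempty intersection $g\Sigma h \cap g'\Sigma' h'$ is a single coset $g(\Sigma \cap a\Sigma' a^{-1})\sigma_0 h$ with $a = g^{-1}g'$ is exactly the right computation and is the only point requiring thought.
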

\begin{proof}
It suffices to make the following observation: if $\cF_i \subset \Gamma$ are small relative to $\cS_i$ for $i=1,2$, then $\cF_1 \cap \cF_2$ is small relative to $\cS$ and $P_{\cF_1 \cap \cF_2} = P_{\cF_1} \circ P_{\cF_2}$.
\end{proof}

\section{\boldmath Malleable deformations coming from group $1$-cocycles}\label{sec.dilation}

\subsection{Malleable deformation by a one-parameter group of automorphisms}\label{subsec.deform}

Throughout this section $\pi : \Gamma \recht \cO(H_\R)$ is an orthogonal representation of a countable group $\Gamma$ and $b : \Gamma \recht H_\R$ is a $1$-cocycle, i.e.\ a map satisfying
$$b(gh) = b(g) + \pi(g) b(h) \quad\text{for all}\;\; g,h \in \Gamma \; .$$
The function $\Gamma \recht \C : g \mapsto \|b(g)\|^2$ is conditionally of negative type. Whenever $\Gamma \actson (N,\tau)$ is a trace preserving action and $M = N \rtimes \Gamma$, we get a semigroup $(\vphi_t)_{t > 0}$ of unital tracial completely positive maps
$$\vphi_t : M \recht M : \vphi_t(au_g) = \exp(-t \|b(g)\|^2) \, a u_g \quad\text{for all}\;\; a \in N, g \in \Gamma \; .$$
Following \cite[beginning of Section 3]{Si10} we construct a malleable deformation of $M$ in the sense of Popa (see \cite[Section 6]{Po06a}), i.e.\
a canonical larger finite von Neumann algebra $M \subset \Mtil$ together with a $1$-parameter group of automorphisms $(\al_t)_{t \in \R}$ of $\Mtil$ such that $\vphi_{t^2}(x) = E_M(\al_t(x))$ for all $x \in M$ and $t \in \R$.

Denote by $\Gamma \actson (Z,\eta)$ the Gaussian action associated with $\pi$. Put $D = \rL^\infty(Z)$ and denote by $\tau$ the trace on $D$ given by integration with respect to $\eta$. Denote by $(\si_g)_{g \in \Gamma}$ the Gaussian action viewed as an action by trace preserving automorphisms of $D$. For our purposes it is most convenient to consider $(D,\tau)$ as the unique pair of a von Neumann algebra $D$ with a faithful tracial state $\tau$ such that $D$ is generated by unitary elements $\om(\xi), \xi \in H_\R$, satisfying
\begin{itemize}
\item $\om(0) = 1$, $\om(\xi_1 + \xi_2) = \om(\xi_1) \, \om(\xi_2)$ for all $\xi_1,\xi_2 \in H_\R$ and $\om(\xi)^* = \om(-\xi)$ for all $\xi \in H_\R$,
\item $\tau(\om(\xi)) = \exp(-\|\xi\|^2)$ for all $\xi \in H_\R$.
\end{itemize}
Moreover, $\sigma_g(\om(\xi)) = \om(\pi(g) \xi)$. Note that the linear span of all $\om(\xi)$, $\xi \in H_\R$, is a dense $*$-subalgebra of $D$.

Define $\Mtil = (D \ovt N) \rtimes \Gamma$ where $\Gamma \actson D \ovt N$ is the diagonal action. The formula
$$\al_t(x) = x \;\;\text{for all}\;\; x \in D \ovt N \quad\text{and}\quad \al_t(u_g) = (\om(t b(g)) \ot 1) u_g \;\;\text{for all}\;\; g \in \Gamma$$
provides the required $1$-parameter group of automorphisms.

We finally prove that the deformation $(\al_t)_{t \in \R}$ is $s$-malleable in the sense of \cite[Section 6]{Po06a}. The formula $\beta(\om(\xi)) = \om(-\xi) = \om(\xi)^*$ for all $\xi \in H_\R$, defines a trace preserving automorphism of $D$. This automorphism extends to an automorphism $\beta$ of $\Mtil$ satisfying $\beta(x) = x$ for all $x \in M$. One easily checks that $\beta$ satisfies the $s$-malleability conditions~: $\beta^2 = \id$ and $\beta \circ \al_t = \al_{-t} \circ \beta$ for all $t \in \R$.

\subsection{Two easy inequalities}

We start by proving a version of the \lq transversality inequality\rq\ from \cite[Lemma 2.1]{Po06c}.

\begin{lemma}\label{lem.equivalent}
Denote for $x \in M$, $\delta_t(x) = \al_t(x) - E_M(\al_t(x))$. Then,
$$\|\delta_t(x)\|_2 \leq \|x-\al_t(x)\|_2 \leq \sqrt{2} \|\delta_t(x)\|_2 \quad\text{for all}\;\; x \in M \; , \; t \in \R \; .$$
So, studying convergence of $\al_t \recht \id$ is equivalent to studying convergence of $\delta_t \recht 0$.

Also, if $x \in M$ and $|t|$ decreases, the expressions $\|\delta_t(x)\|_2$ and $\|x - \al_t(x)\|_2$ decrease.
\end{lemma}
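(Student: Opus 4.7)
The first inequality is a Pythagorean observation. Since $\delta_t(x) \in \rL^2(\Mtil) \ominus \rL^2(M)$ while $x - E_M(\alpha_t(x)) \in \rL^2(M)$, the decomposition
\[
x - \alpha_t(x) \;=\; \bigl(x - E_M(\alpha_t(x))\bigr) \;-\; \delta_t(x)
\]
is orthogonal in $\rL^2(\Mtil)$, giving $\|x - \alpha_t(x)\|_2^2 = \|x - E_M(\alpha_t(x))\|_2^2 + \|\delta_t(x)\|_2^2$. In particular $\|\delta_t(x)\|_2 \leq \|x - \alpha_t(x)\|_2$, and the upper bound $\|x - \alpha_t(x)\|_2 \leq \sqrt{2}\,\|\delta_t(x)\|_2$ becomes equivalent to the key inequality $\|x - E_M(\alpha_t(x))\|_2 \leq \|\delta_t(x)\|_2$, which I would establish by spectral/operator-theoretic means on $\rL^2(M)$.

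The core observation is that $\vphi_s$ extends to a positive self-adjoint contraction on $\rL^2(M)$ satisfying the semigroup identity $\vphi_s \vphi_r = \vphi_{s+r}$. Self-adjointness is visible from the explicit diagonal formula $\vphi_s(a u_g) = e^{-s\|b(g)\|^2} a u_g$ on the $\|\cdot\|_2$-dense $*$-subalgebra spanned by $\{a u_g : a \in N, g \in \Gamma\}$; positivity then follows from $\vphi_s = \vphi_{s/2}^2$. Writing $T := \vphi_{t^2}$, so that $E_M(\alpha_t(x)) = Tx$, and using trace-preservation $\|\alpha_t(x)\|_2 = \|x\|_2$, direct calculation gives
\[
\|x - E_M(\alpha_t(x))\|_2^2 = \langle (1-T)^2 x, x\rangle, \qquad \|\delta_t(x)\|_2^2 = \|x\|_2^2 - \|Tx\|_2^2 = \langle (1-T^2) x, x\rangle,
\]
so the required inequality reduces to the operator inequality $(1-T)^2 \leq 1 - T^2$, i.e.\ $T(1-T) \geq 0$, which is clear from $0 \leq T \leq 1$ and the fact that $T$ and $1-T$ commute.

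For monotonicity, an analogous calculation (combining $\langle x, \alpha_t(x)\rangle = \langle x, E_M(\alpha_t(x))\rangle = \langle x, Tx\rangle$) yields $\|x - \alpha_t(x)\|_2^2 = 2\,\langle (1-T)x, x\rangle$. Both $\langle (1-\vphi_{t^2})x, x\rangle$ and $\langle (1-\vphi_{2t^2})x, x\rangle$ are then monotone increasing in $t^2$, hence in $|t|$, because $\vphi_s \leq \vphi_r$ whenever $0 \leq r \leq s$ (from $\vphi_s = \vphi_r \vphi_{s-r}$ and $0 \leq \vphi_{s-r} \leq 1$). The only genuinely technical point is justifying the extension of $\vphi_s$ to a positive self-adjoint operator on $\rL^2(M)$, but this is immediate from the explicit formula and presents no real obstacle; notably, the $s$-malleability automorphism $\beta$ is not needed here (it would only yield the weaker inequality $\|x - \alpha_{2t}(x)\|_2 \leq 2\,\|\delta_t(x)\|_2$).
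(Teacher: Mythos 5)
Your proof is correct. The paper's own proof is a bare-hands Fourier computation: writing $x=\sum_g x_g u_g$ and $f_t(g)=\exp(-t^2\|b(g)\|^2)$, it shows $\|\delta_t(x)\|_2^2 = \sum_g (1-f_t(g)^2)\|x_g\|_2^2$ and $\|x-\al_t(x)\|_2^2 = 2\sum_g (1-f_t(g))\|x_g\|_2^2$, then applies the scalar inequalities $1-f^2 \leq 2(1-f) \leq 2(1-f^2)$ for $0<f\leq 1$; monotonicity follows because each $f_t(g)$ increases as $|t|$ decreases. Your route is the same computation in a more abstract dress: the operator $T=\vphi_{t^2}$ is diagonal in the Fourier basis with eigenvalues $f_t(g)$, so the operator inequalities $(1-T)^2 \leq 1-T^2$ and $\vphi_s \leq \vphi_r$ for $s\geq r$ are exactly the functional-calculus versions of those scalar bounds. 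What your presentation buys is a cleaner organizing principle: the Pythagorean identity $\|x-\al_t(x)\|_2^2 = \|x - E_M(\al_t(x))\|_2^2 + \|\delta_t(x)\|_2^2$, which the paper never states explicitly, at once gives the first inequality for free and reduces the second to the single estimate $\|(1-T)x\|_2 \leq \|\delta_t(x)\|_2$, i.e.\ $T^2\leq T$, i.e.\ $0\leq T\leq 1$. You also correctly observe that the $s$-malleability automorphism $\beta$ is unnecessary here and would only give the form $\|x-\al_{2t}(x)\|_2 \leq 2\|\delta_t(x)\|_2$ (as in Popa's original transversality lemma in \cite{Po06c}), whereas the explicit Gaussian/cocycle structure yields the sharper one-sided $\sqrt{2}$-inequality with $\al_t$ itself.
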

\begin{proof}
Define $f_t(g) = \exp(-t^2 \|b(g)\|^2)$ and note that $0 < f_t(g) \leq 1$ for all $g \in \Gamma$ and $t \in \R$. Take $x \in M$ and write $x = \sum_{g \in \Gamma} x_g u_g$ with $x_g \in N$.
It is easy to check that
$$\|\delta_t(x)\|_2^2 = \sum_{g \in \Gamma} (1 - f_t(g)^2) \|x_g\|_2^2 \quad\text{and}\quad \|x-\al_t(x)\|_2^2 = 2 \sum_{g \in \Gamma} (1-f_t(g)) \|x_g\|_2^2 \; .$$
Using the inequalities $1 - f_t(g)^2 \leq 2(1-f_t(g)) \leq 2(1-f_t(g)^2)$, the lemma follows.
\end{proof}

\begin{lemma}\label{noglemma}
For all $x,y \in (\Mtil)_1$ and all $t,\eps > 0$, there exists $s > 0$ such that
$$\|E_M(x \al_t(a) y)\|_2 \leq \|E_M(\al_s(a))\|_2 + \eps \quad\text{for all}\;\; a \in (M)_1 \; .$$
\end{lemma}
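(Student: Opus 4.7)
The plan is to reduce the problem to an explicit computation on a dense subalgebra of $\Mtil$ and then exploit the Gaussian structure of $D$.

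\textbf{Step 1 (Density reduction).} Let $\cA$ be the $*$-subalgebra of $\Mtil$ generated by $M$ together with the Weyl unitaries $\{\omega(\xi)\ot 1 : \xi \in H_\R\}$; it is weakly dense in $\Mtil$. By Kaplansky's density theorem I may choose $x_0, y_0 \in (\cA)_1$ with $\|x-x_0\|_2 + \|y-y_0\|_2 < \eps/2$. Using the basic estimate $\|UZV\|_2 \leq \|U\|\,\|V\|\,\|Z\|_2$, this yields
$$\|E_M(x\al_t(a) y) - E_M(x_0 \al_t(a) y_0)\|_2 \leq \|x-x_0\|_2 + \|y-y_0\|_2 < \eps/2$$
uniformly in $a \in (M)_1$, so it suffices to prove the bound for $x_0, y_0 \in (\cA)_1$ with error $\eps/2$.

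\textbf{Step 2 (Explicit formula).} Every element of $\cA$ admits a finite expansion of the form $x_0 = \sum_{i=1}^I (\omega(\xi_i)\ot 1)\, m_i$ with $m_i \in M$, and similarly $y_0 = \sum_{j=1}^J n_j\, (\omega(\eta_j)\ot 1)$ with $n_j \in M$. Writing $m_i = \sum_g m_{i,g} u_g$, $n_j = \sum_h n_{j,h} u_h$, $a = \sum_k a_k u_k$, and using the commutation rule $u_g(\omega(\xi)\ot 1) = (\omega(\pi(g)\xi)\ot 1)u_g$ together with $\tau_D(\omega(\zeta)) = \exp(-\|\zeta\|^2)$, a direct calculation yields
$$E_M(x_0 \al_t(a) y_0) = \sum_{\ell \in \Gamma}\Bigl(\sum_{\substack{i,j,g,h,k\\gkh=\ell}} \exp(-\|\xi_i + t\pi(g)b(k) + \pi(\ell)\eta_j\|^2)\, m_{i,g}\si_g(a_k)\si_{gk}(n_{j,h})\Bigr)u_\ell.$$

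\textbf{Step 3 (Gaussian domination).} The crux is the pointwise inequality: fix $s \in (0,t)$ and set $R := (\max_i\|\xi_i\| + \max_j\|\eta_j\|)/(t-s)$. Then for every $k \in \Gamma$ and every admissible $(i,j,g,h)$,
$$\exp(-\|\xi_i + t\pi(g)b(k) + \pi(gkh)\eta_j\|^2) \leq \exp(-s^2\|b(k)\|^2) + (1 - \exp(-s^2 R^2)).$$
For $\|b(k)\| \geq R$ the reverse triangle inequality gives $\|\xi_i + t\pi(g)b(k) + \pi(gkh)\eta_j\| \geq t\|b(k)\| - \|\xi_i\| - \|\eta_j\| \geq s\|b(k)\|$, so the first summand suffices alone; for $\|b(k)\| < R$ the left-hand side is bounded by $1$ and the right-hand side by at least $\exp(-s^2 R^2)$, so the difference is at most $1-\exp(-s^2 R^2) \leq s^2 R^2$.

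\textbf{Step 4 (Summation and conclusion).} Inserting the bound of Step 3 into the formula of Step 2, applying Cauchy--Schwarz on the finite index sum over $(i,j,g,h)$ together with the operator norm bound $\|x_0\|, \|y_0\| \leq 1$ and using the orthogonality of distinct $u_\ell$'s, I expect to derive
$$\|E_M(x_0 \al_t(a) y_0)\|_2 \leq \|E_M(\al_s(a))\|_2 + \delta(s),$$
with $\delta(s) \to 0$ as $s \to 0$. Choosing $s \in (0,t)$ small enough that $\delta(s) < \eps/2$ and combining with Step 1 finishes the proof.

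\textbf{Main obstacle.} The subtlest point is controlling the combinatorial constant that arises from Cauchy--Schwarz on the inner sum in Step 4, since $\|x_0\|_\infty \leq 1$ does not immediately bound $\sum_{i,g}\|m_{i,g}\|^2$ or similar quantities in the decomposition. The way around this is to exploit that $R$ depends only on the fixed approximation data $x_0, y_0$, so the $O(s^2)$ error coming from Step 3 can absorb this factor by taking $s$ small enough.
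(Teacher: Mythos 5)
Your Steps 1--3 are sound and the ``Main obstacle'' you flag at the end is exactly where the argument breaks. The difficulty is worse than you describe: the Cauchy--Schwarz step does not merely produce an $O(s^2)$ error that can be made small; it multiplies the \emph{main} term by an uncontrolled constant. Concretely, once you decompose $m_i = \sum_g m_{i,g}u_g$ and $n_j = \sum_h n_{j,h}u_h$ and apply Cauchy--Schwarz on the inner sum over $(i,j,g,h)$, the resulting estimate has the shape
$$\|E_M(x_0\al_t(a)y_0)\|_2^2 \;\leq\; C(x_0,y_0)\,\Bigl(\|E_M(\al_s(a))\|_2^2 + \delta(s)\Bigr),$$
where $C(x_0,y_0)$ involves the number of Fourier modes and the quantities $\sum_{i,g}\|m_{i,g}\|^2$, $\sum_{j,h}\|n_{j,h}\|^2$ --- none of which are bounded by $\|x_0\|,\|y_0\|\le 1$. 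Since $\|E_M(\al_s(a))\|_2$ is of order $\|a\|_2$ for small $s$, the factor $\sqrt{C}$ cannot be absorbed by shrinking $s$, and you do not recover $\|E_M(\al_s(a))\|_2 + \eps$.

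The paper's proof avoids this by \emph{never} expanding $m_i,n_j$ into Fourier coefficients. Writing $x_0=\sum_i x_i\,\om(\xi_i)$ and $y_0=\sum_j \om(\eta_j)\,y_j$ with $x_i,y_j\in M$, one uses $M$-bimodularity of $E_M$ to pull $x_i,y_j$ outside: $E_M(x_i\,\om(\xi_i)\al_t(a)\om(\eta_j)\,y_j)=x_i\,E_M(\om(\xi_i)\al_t(a)\om(\eta_j))\,y_j$, for which the Gaussian coefficient $\exp(-\|\xi_i+tb(g)+\pi(g)\eta_j\|^2)$ depends only on $(i,j,g)$ --- there is no triple convolution. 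Then one splits $a=P_\cF(a)+P_{\Gamma-\cF}(a)$ with $\cF=\{g:\|b(g)\|\le\kappa\}$. On $\Gamma-\cF$ the Gaussian factor is uniformly tiny (this is the analogue of your Step~3). On $\cF$ one uses only the clean contraction $\|E_M(x_0\,z\,y_0)\|_2\le\|x_0\|\,\|z\|_2\,\|y_0\|\le\|z\|_2$, which exploits $\|x_0\|,\|y_0\|\le 1$ \emph{as operators} and hence loses no constant, and finally $\|P_\cF(a)\|_2\le\|E_M(\al_s(a))\|_2+\eps/4$ for $s$ small depending only on $\kappa$. If you want to salvage your approach, the fix is precisely this: keep the $M$-valued factors intact, use $E_M$'s $M$-bimodularity, and apply the operator-norm contraction on $L^2$ to the part supported on $\cF$ instead of Cauchy--Schwarz.
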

\begin{proof}
Fix $x,y \in (\Mtil)_1$ and fix $t,\eps > 0$. By the Kaplansky density theorem choose $x_1,\ldots,x_n$, $y_1,\ldots,y_m \in M$ and $\xi_1,\ldots,\xi_n,\eta_1,\ldots,\eta_m \in H_\R$ such that the elements
$$x_0 := \sum_{i=1}^n x_i \om(\xi_i) \quad\text{and}\quad y_0 := \sum_{j=1}^m \om(\eta_j) y_j$$
satisfy $\|x_0\|, \|y_0\| \leq 1$ and $\|x-x_0\|_2 \leq \eps/4$, $\|y-y_0\|_2 \leq \eps / 4$.

Define $\kappa_1 = \max\{\|x_i\| \mid i = 1,\ldots,n\}$ and $\kappa_2 = \max \{\|y_j\| \mid j=1,\ldots,m\}$. Also put $\gamma_1 = \max\{\|\xi_i\| \mid i=1,\ldots,n\}$ and $\gamma_2 = \max \{\|\eta_j\| \mid j=1,\ldots,m\}$. Choose $\rho > 0$ large enough such that $\exp(-\rho^2) \leq \eps / (4 n \kappa_1 m \kappa_2)$. Then put $\kappa = (\rho + \gamma_1 + \gamma_2)/t$. Finally take $s > 0$ small enough such that $\exp(- 2 s^2 \kappa^2) \geq 1 - (\eps/4)^2$. We claim that this $s$ satisfies the conclusions of the lemma.

To prove the claim define $\cF := \{g \in \Gamma \mid \|b(g)\| \leq \kappa\}$. Whenever $g \in \Gamma - \cF$, we have $\| t b(g)\| \geq \rho + \gamma_1 + \gamma_2$ and hence $\|\xi_i + t b(g) + \pi(g)\eta_j\| \geq \rho$ so that
$$\exp(-\|\xi_i + t b(g) + \pi(g)\eta_j\|^2) \leq \exp(-\rho^2) \leq  \frac{\eps}{4 n \kappa_1 m \kappa_2} \quad\text{for all}\;\; i = 1,\ldots,n \; , \; j=1,\ldots,m \;\; .$$
For any subset $\cG \subset \Gamma$ we denote by $P_\cG$ the orthogonal projection of $\rL^2(M)$ onto the closed linear span of $\{a u_g \mid a \in N, g \in \cG\}$.
We decompose every $a \in M$ in its Fourier expansion $a = \sum_{g \in \Gamma} a_g u_g$ with $a_g \in N$.
Noticing that for all $a \in M$ we have
$$E_M(\om(\xi_i) \al_t(P_{\Gamma-\cF}(a)) \om(\eta_j)) = \sum_{g \in \Gamma - \cF} \exp (-\|\xi_i + t b(g) + \pi(g)\eta_j\|^2) \; a_g u_g \; ,$$
it follows that for all $a \in (M)_1$ we have
$$\|E_M(\om(\xi_i) \al_t(P_{\Gamma-\cF}(a)) \om(\eta_j))\|_2 \leq \frac{\eps}{4 n \kappa_1 m \kappa_2} \; .$$
Multiplying with $x_i$ on the left and with $y_j$ on the right and summing over $i$ and $j$, we conclude that
$$\|E_M(x_0 \al_t(P_{\Gamma-\cF}(a)) y_0)\|_2 \leq \frac{\eps}{4}$$
for all $a \in (M)_1$. Since $\|x_0\|,\|y_0\| \leq 1$, it follows that for all $a \in (M)_1$ we have
$$\|E_M(x_0 \al_t(a) y_0)\|_2 \leq \|P_\cF(a)\|_2 + \frac{\eps}{4} \; .$$
Whenever $g \in \cF$ we have $\|b(g)\| \leq \kappa$ and therefore $\exp(- 2 \|s b(g)\|^2) \geq 1 - (\eps/4)^2$. So, for all $a \in (M)_1$ we get
\begin{align*}
\|P_\cF(a)\|_2^2 &= \sum_{g \in \cF} \|a_g\|_2^2 \leq \sum_{g \in \Gamma} \Bigl(  \exp(- 2 \|s b(g)\|^2) + \frac{\eps^2}{16}\Bigr) \; \|a_g\|_2^2
\\ &= \|E_M(\al_s(a))\|_2^2 + \Bigl(\frac{\eps}{4} \|a\|_2 \Bigr)^2 \leq \Bigl( \|E_M(\al_s(a))\|_2 + \frac{\eps}{4}\Bigr)^2 \; .
\end{align*}
Altogether it follows that
$$\|E_M(x_0 \al_t(a) y_0)\|_2 \leq \|E_M(\al_s(a))\|_2 + \frac{\eps}{2}$$
for all $a \in (M)_1$. Our choice of $x_0$ and $y_0$ then imply the claim.
\end{proof}

\subsection{The maximal projection under which the malleable deformation is uniform}

\begin{lemma}\label{lem.maxuniform-general}
Let $p \in M$ be a projection and $B \subset pMp$ a von Neumann subalgebra. Define $Q \subset pMp$ as the normalizer of $B$ inside $pMp$. The set of projections
$$\cP := \{q_0 \mid q_0 \;\;\text{is a projection in}\;\; B' \cap pMp \;\;\text{and}\;\; \al_t \recht \id \;\;\text{in \two uniformly on}\; (B q_0)_1 \}$$
attains its maximum in a unique projection $q \in \cP$. This projection $q$ belongs to $\cZ(Q)$.
\end{lemma}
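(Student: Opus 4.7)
The statement and its proof mirror Proposition~\ref{prop.maxintertwine}, with the approximate containment $\almost N \rtimes \cS$ replaced by convergence $\al_t \to \id$ in \two, uniformly on norm-bounded subsets. I would repeat the same three-step structure: let $(q_k)$ be a maximal orthogonal family of nonzero projections in $\cP$ and set $q = \sum_k q_k$; this $q$ will be the claimed maximum, and uniqueness is automatic from the maximality.

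First, to verify $q \in \cP$, fix $\eps > 0$ and approximate $q$ in \two by a finite partial sum $q' = \sum_{i=1}^n q_{k_i}$ with $\|q - q'\|_2 \leq \eps/4$. For $b \in (Bq)_1$, decompose $b = bq' + b(q-q')$; the $n$ uniform convergences on the $(B q_{k_i})_1$ together with the orthogonality of the $q_{k_i}$ give $\|\al_t(bq') - bq'\|_2 \leq \eps/2$ for $|t|$ small enough, while $\|b(q-q')\|_2^2 = \tau((q-q')b^*b(q-q')) \leq \|q-q'\|_2^2$ controls the tail automatically (since $\al_t$ is $\|\cdot\|_2$-isometric). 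Next, to see that $q$ is the supremum of $\cP$, I would apply the spectral projection trick from Proposition~\ref{prop.maxintertwine}: if some $q' \in \cP$ is not dominated by $q$, take a nonzero spectral projection $q''$ of $T = (p-q)q'(p-q)$ on an interval $[\eps_0, \|T\|]$ with $\eps_0 > 0$, producing a bounded $S \in B' \cap pMp$ with $q'' = (p-q)q'(p-q)S$ and $q'' \leq p - q$. The identity $bq'' = (p-q)(bq')(p-q)S$ for $b \in (B)_1$, combined with a three-term telescope, bounds $\|\al_t(bq'') - bq''\|_2$ by a sum of terms that tend to $0$ uniformly in $b \in (B)_1$ as $t \to 0$: one term uses that $bq' \in (Bq')_1$ with $q' \in \cP$, the other two use only that $p-q, (p-q)S \in M$. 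Since the normal quotient map $B \to Bq''$ sends $(B)_1$ onto $(Bq'')_1$, this shows $q'' \in \cP$, contradicting maximality.

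Finally, for the $Q$-invariance of $q$, apply an analogous telescope to conjugation: for $u \in \cN_{pMp}(B)$ and $b \in (Bq)_1$,
$$\al_t(ubu^*) - ubu^* = \al_t(u)(\al_t(b)-b)\al_t(u)^* + (\al_t(u)-u)\, b\, \al_t(u)^* + ub\,(\al_t(u)^* - u^*),$$
which shows $\al_t \to \id$ in \two uniformly on $u(Bq)_1 u^* = (B(uqu^*))_1$; since $uqu^* \in B' \cap pMp$, this forces $uqu^* \in \cP$ and hence $uqu^* \leq q$, and by symmetry $uqu^* = q$. Every projection $e \in B' \cap pMp$ lies in $Q$ because $2e - p$ is a self-adjoint unitary in $pMp$ that trivially normalizes $B$, whence $B' \cap pMp \subset Q$ and in particular $q \in Q \cap Q' = \cZ(Q)$. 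The main obstacle I anticipate is the second step: the spectral cutoff $\eps_0 > 0$ is what keeps $S$ bounded, the quotient identity $(Bq'')_1 = (B)_1 q''$ has to be invoked cleanly, and the telescope must split off genuinely $b$-independent pieces to preserve uniformity; steps one and three are essentially routine transcriptions of the corresponding arguments in Proposition~\ref{prop.maxintertwine}.
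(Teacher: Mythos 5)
Your proof is correct and takes essentially the same route as the paper's: a maximal orthogonal family of projections in $\cP$ summing to $q$, the spectral-projection-plus-telescope argument to show $q$ is the maximum of $\cP$, and the covariance $u(B)_1 q u^* = (B)_1 uqu^*$ to conclude $q \in \cZ(Q)$. The paper is terser, leaving the check $q \in \cP$ and the telescoping estimates implicit, and simply writing $Q' \cap pMp = \cZ(Q)$ where you supply the reason $B' \cap pMp \subset Q$, but the structure and the key identities are identical.
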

\begin{proof}
Let $q_k \in B' \cap pMp$ be a maximal orthogonal sequence of nonzero elements of $\cP$. Put $q = \sum_k q_k$. It is easy to check that $q \in \cP$. We claim that $q$ is the maximum of $\cP$. So assume that $e \in \cP$. We have to prove that $e \leq q$. If this is not the case, $T:= (p-q) e (p-q)$ is nonzero. Since $\al_t \recht \id$ uniformly on $(B)_1 e$, the same is true on
$$(B)_1 T = (p-q) \; (B)_1 e \; (p-q) \; .$$
Since $T$ is nonzero we can find a bounded operator $S \in B' \cap pMp$ such that $T S$ is a nonzero spectral projection $f$ of $T$. It follows that $\al_t \recht \id$ uniformly on $(B)_1 f = (B)_1 T \; S$. This contradicts the maximality of the sequence $(q_k)$.

We finally prove that $q \in \cZ(Q)$. Let $u \in \cN_{pMp}(B)$. Since $\al_t \recht \id$ uniformly on $(B)_1 q$, the same is true on $u \; (B)_1 q \; u^*$, which equals $(B)_1 \; uqu^*$. Hence $uqu^* \leq q$ which means that $uqu^* = q$ for all $u \in \cN_{pMp}(B)$. So, $q \in Q' \cap pMp = \cZ(Q)$.
\end{proof}

Assume that $B \subset pMp$ is a von Neumann subalgebra and let $q \in B' \cap pMp$ be as in the previous lemma, the largest projection such that $\al_t \recht \id$ uniformly on $(B)_1 q$. In good cases, it follows that on $(B)_1 (p-q)$ the convergence is the furthest possible from being uniform, in the sense that we can find a sequence of unitaries $(w_n)$ in $\cU(B)$ such that for every $t > 0$ we have that $\|E_M(\al_t(w_n (p-q)))\|_2 \recht 0$ as $n \recht \infty$. These good cases correspond to $\pi$ being mixing relative to a family $\cS$ of subgroups of $\Gamma$ such that the $1$-cocycle $b$ is bounded on every $\Sigma \in \cS$, see Proposition \ref{prop.maxuniform} below.

To establish uniform convergence of $\al_t$ on $Bq$, it often suffices to prove uniform convergence on $r \cG r$ where $r \leq q$ is a smaller projection and $\cG$ is a group of unitaries generating $B$. The precise result goes as follows.

\begin{proposition}\label{prop.extenduniform}
Let $p \in M$ be a projection and $B \subset pMp$ a von Neumann subalgebra generated by a group of unitaries $\cG \subset \cU(B)$. Let $r \in pMp$ be any projection and assume that $\al_t \recht \id$ in \two uniformly on the set $r\cG r$.

Define $Q$ as the normalizer of $B$ inside $pMp$ and denote by $q$ the smallest projection in $\cZ(Q)$ satisfying $r \leq q$. Then, $\al_t \recht \id$ in \two uniformly on the unit ball of $Bq$.
\end{proposition}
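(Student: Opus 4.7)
The strategy is to reduce the proposition to Lemma~\ref{lem.maxuniform-general}. Let $q_0 \in B' \cap pMp$ be the maximum projection provided by that lemma such that $\al_t \recht \id$ in \two uniformly on $(Bq_0)_1$; automatically $q_0 \in \cZ(Q)$. Since $q$ is by hypothesis the smallest projection in $\cZ(Q)$ with $r \leq q$, it suffices to prove $r \leq q_0$: then $q \leq q_0$ and the inclusion $(Bq)_1 \subset (Bq_0)_1$ transfers uniform convergence.

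First I would extend the hypothesis from $r\cG r$ to the compressed unit ball $\{rbr : b \in (B)_1\}$: any $b \in (B)_1$ is a $\|\cdot\|_2$-limit of convex combinations of elements of $\cG$ of norm at most one by Kaplansky density, the triangle inequality transfers uniform convergence to convex combinations, and the set $\{x \in (rMr)_1 : \|\al_t(x)-x\|_2 \leq \eps \text{ for all } |t|\leq t_0\}$ is convex and $\|\cdot\|_2$-closed. Next I would identify a projection $f \in B' \cap pMp$ with $r \leq f$ on which uniform convergence of $(Bf)_1$ can be forced. Taking $f$ to be the support projection of $R := E_{B'\cap pMp}(r)$, the identity $\|r(1-f)\|_2^2 = \tau((1-f)r(1-f)) = \tau((1-f)R(1-f)) = 0$ (using bimodularity of $E_{B'\cap pMp}$ and its faithfulness on positives) gives $r \leq f$. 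Once uniform convergence on $(Bf)_1$ is established, $f$ belongs to the set $\cP$ of Lemma~\ref{lem.maxuniform-general}, so $f \leq q_0$ by maximality, and $r \leq f \leq q_0$ as desired.

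For the uniform convergence on $(Bf)_1$ I would spread via $\cU(B) \subset Q$: for each fixed $v \in \cU(B)$, the pointwise $\|\cdot\|_2$-convergence $\al_t(v) \to v$ together with the product expansion
\[
\al_t(v(rbr)v^*) - v(rbr)v^* = (\al_t(v)-v)\,\al_t(rbr)\,\al_t(v^*) + v\,(\al_t(rbr)-rbr)\,\al_t(v^*) + v(rbr)\,(\al_t(v^*)-v^*)
\]
transfers the uniform convergence on $\{rbr : b \in (B)_1\}$ to $\{(vrv^*)b(vrv^*) : b \in (B)_1\}$, with $t_0$ depending on $v$. By Dixmier's averaging theorem, $R$ lies in the $\|\cdot\|_2$-closure of the convex hull of $\{vrv^* : v \in \cU(B)\}$, and Borel functional calculus makes $f$ $\|\cdot\|_2$-approximable by polynomials in $R$. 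The main obstacle will be to maintain uniformity in $b$ through these approximations: I would fix $\eps > 0$, select finitely many $v_i \in \cU(B)$ so that $\sum_i \lambda_i v_i r v_i^*$ approximates $R$ to within $\eps$ in $\|\cdot\|_2$, choose a polynomial approximating $f$ to within $\eps$, and apply the closure arguments of the first step to the fixed finite family of conjugates involved. The resulting uniform convergence on a $\|\cdot\|_2$-dense subset of $(Bf)_1$ then extends to all of $(Bf)_1$ by $\|\cdot\|_2$-continuity of $b \mapsto \|\al_t(b)-b\|_2$ on $(M)_1$, completing the proof.
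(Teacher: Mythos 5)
Your reduction to Lemma~\ref{lem.maxuniform-general} via the support projection $f$ of $R := E_{B'\cap pMp}(r)$ is the right skeleton and matches the paper's (where $f$ is called $q_0$). But the first step --- upgrading uniform convergence from $r\cG r$ to $\{rbr : b\in(B)_1\}$ --- has a genuine gap, and everything afterwards relies on it. Kaplansky density gives that $(B)_1$ is the $\|\cdot\|_2$-closure of $(\operatorname{span}\cG)_1$, i.e.\ of \emph{linear} combinations $\sum_i c_i g_i$ ($g_i\in\cG$) with operator norm at most one; these are not convex combinations, and the bound $\|\sum_i c_i g_i\|\leq 1$ gives no control on $\sum_i|c_i|$, so the triangle inequality does not propagate the uniform estimate. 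In fact $\overline{\operatorname{conv}}^{\|\cdot\|_2}(\cG)$ can be a proper subset of $(B)_1$: for $B = \rL \Z$ and $\cG = \{u^n : n\in\Z\}$, the trace is nonnegative on the $\|\cdot\|_2$-closed convex hull of $\cG$, so $-1\in\cU(B)$ is not in it. Thus uniform convergence of $\al_t\recht\id$ on a \emph{generating} group $\cG$ does not pass to $(B)_1$ (nor even to $\cU(B)$) by convexity-and-closure arguments alone.

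The paper avoids this by staying with the group $\cG$ throughout the spreading $r\cG r\recht T\cG T\recht \cG q_0$ (with $T=E_{B'\cap pMp}(r)$ and $q_0$ its support projection), and only at the very end passing from $\cG q_0$ to $(B)_1 q_0$ by a Popa-style intertwiner: for $t$ small, the unique minimal-$\|\cdot\|_2$ element $v$ of $\overline{\operatorname{conv}}^{\|\cdot\|_2}\{uq_0\al_t(q_0 u^*) : u\in\cG\}$ satisfies $bv=v\al_t(b)$ for all $b\in Bq_0$ and $\|v-q_0\|_2\leq\eps$, which yields $\|\al_t(b)-b\|_2\leq 3\eps$ for all $b\in(B)_1q_0$. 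Some such device is indispensable here; your Kaplansky step has no substitute for it, and the later parts of your argument (conjugation by $v\in\cU(B)$, Dixmier averaging, polynomial approximation of $f$ from $R$) all feed off the uniform estimate on $\{rbr : b\in(B)_1\}$ that has not actually been established.
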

\begin{proof}
For $x \in M$ we write $\delta_t(x) = \al_t(x) - E_M(\al_t(x))$. By Lemma \ref{lem.equivalent} we have that $\delta_t \recht 0$ in \two uniformly on $r \cG r$. Define $T := E_{B' \cap pMp}(r)$. We claim that $\delta_t \recht 0$ in \two uniformly on $T \cG T$. To prove this claim, choose $\eps > 0$. Note that $T$ coincides with the unique element of minimal \two in the $\|\,\cdot\,\|_2$-closed convex hull of $\{uru^* \mid u \in \cG\}$. So we can take $u_1,\ldots,u_n \in \cG$ and $\lambda_1,\ldots,\lambda_n \in [0,1]$ such that $\sum_i \lambda_i = 1$ and such that
$$T_0:= \sum_{i=1}^n \lambda_i u_i r u_i^*$$
satisfies $\|T-T_0\|_2 \leq \eps$. Note that $\|T_0\| \leq 1$.

Choose $t > 0$ small enough such that $\|\delta_t(r u r)\|_2 \leq \eps$ for all $u \in \cG$ and such that $\|\al_t(u_i) - u_i\|_2 \leq \eps$ for all $i=1,\ldots,n$. The latter implies that $\|\delta_t(u_i x u_j^*) - u_i \delta_t(x) u_j^*\|_2 \leq 4\eps$ for all $x \in (M)_1$. Whenever $u \in \cG$, we have $u_i^* u u_j \in \cG$ and hence
$$\|\delta_t(u_i r u_i^* \; u \; u_j r u_j^*)\|_2 \leq \|u_i \; \delta_t(r \; u_i^* u u_j \; r) \; u_j^*\|_2 + 4 \eps \leq 5 \eps \; .$$
Taking convex combinations it follows that $\|\delta_t(T_0 u T_0)\|_2 \leq 5 \eps$ for all $u \in \cG$. Since $\|T - T_0\|_2 \leq \eps$ and both $\|T\|,\|T_0\| \leq 1$, it follows that $\|\delta_t(T u T)\|_2 \leq 7 \eps$ for all $u \in \cG$, hence proving the claim.

Since $T$ commutes with the elements in $\cG$ and since for all $u \in \cG$ we have $\|\delta_t(u T^2) - \delta_t(u)T^2\|_2 \leq 2\|\al_t(T^2) - T^2\|_2$, it follows that $\|\delta_t(u)T^2\|_2 \recht 0$ uniformly in $u \in \cG$. For every $\delta > 0$ define the spectral projection $q_\delta = \chi_{(\delta,+\infty)}(T^2)$ and take bounded operators $T_\delta \in B' \cap pMp$ such that $T^2 T_\delta = q_\delta$. It follows that for every $\delta > 0$, we have that $\|\delta_t(u)q_\delta\|_2 \recht 0$ uniformly in $u \in \cG$. Denote by $q_0$ the support projection of $T$. When $\delta \recht 0$ we have $\|q_0-q_\delta\|_2 \recht 0$. Hence, $\|\delta_t(u) q_0\|_2 \recht 0$ uniformly in $u \in \cG$. So, $\al_t \recht \id$ in \two uniformly on $\cG q_0$.

Note that $r \leq q_0$ and that actually $q_0$ precisely is the smallest projection in $B' \cap pMp$ satisfying $r \leq q_0$. By Lemma \ref{lem.maxuniform-general} it remains to prove that $\al_t \recht \id$ in \two uniformly on the unit ball of $B q_0$. Choose $\eps > 0$. Take $t > 0$ such that $\|uq_0 - \al_t(uq_0)\|_2 \leq \eps$ for all $u \in \cG$. In particular $\|q_0 - \al_t(q_0)\|_2 \leq \eps$. Define $v \in \Mtil$ as the element of smallest \two in the $\|\,\cdot\,\|_2$-closed convex hull of $\{u q_0 \al_t(q_0 u^*) \mid u \in \cG\}$. Note that $b v = v \al_t(b)$ for all $b \in Bq_0$ and that $\|v-q_0\|_2 \leq \eps$. Whenever $b \in (B)_1 q_0$ we have
$$\|\al_t(b) - v \al_t(b)\|_2 = \|(\al_t(q_0) - v) \al_t(b)\|_2 \leq \|\al_t(q_0) - v\|_2 \leq \|\al_t(q_0) - q_0\|_2 + \|q_0 - v\|_2 \leq 2 \eps \; .$$
We also have for all $b \in (B)_1 q_0$ that $\|b v - b\|_2 \leq \|b (v-q_0)\|_2 \leq \eps$. Since $bv = v \al_t(b)$ we conclude that $\|\al_t(b) - b\|_2 \leq 3 \eps$ for all $b \in (B)_1 q_0$. This ends the proof of the proposition.
\end{proof}

\subsection{Cocycles into representations that are weakly contained in the regular representation}

\begin{lemma}\label{lem.regular}
Assume that $\pi : \Gamma \recht \cO(H_\R)$ is weakly contained in the regular representation and that $N$ is amenable. Then the bimodule $\bim{M}{\rL^2(\Mtil \ominus M)}{M}$ is weakly contained in the coarse $M$-$M$-bimodule.
\end{lemma}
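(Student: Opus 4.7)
The plan is to exploit the symmetric Fock space structure of $\rL^2(D)$ coming from the Gaussian construction, together with Fell absorption on the group side and amenability of $N$ on the algebra side.

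First I would identify the bimodule $\bim{M}{\rL^2(\Mtil)}{M}$ with $\bim{M}{\rL^2(D) \ot \rL^2(M)}{M}$, where the left action is $au_g \mapsto \sigma_g \ot L(au_g)$ and the right action acts only on the $\rL^2(M)$-factor. This follows from $\Mtil = (D \ovt N) \rtimes \Gamma$, using that $D$ and $N$ commute inside $D \ovt N$ and that $u_g d u_g^{-1} = \sigma_g(d)$. Under this identification $\rL^2(M) \subset \rL^2(\Mtil)$ corresponds to $\C 1 \ot \rL^2(M)$, so
$$\bim{M}{\rL^2(\Mtil \ominus M)}{M} \cong \bim{M}{(\rL^2(D) \ominus \C 1) \ot \rL^2(M)}{M} .$$
The standard decomposition of the $\rL^2$-space of a Gaussian gives $\rL^2(D) = \bigoplus_{n \geq 0} S^n(H_\C)$, with $\Gamma$ acting on $S^n(H_\C)$ by the symmetrization of $\pi^{\ot n}$. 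Thus the bimodule splits as an orthogonal sum over $n \geq 1$ of bimodules of the form $S^n(H_\C) \ot \rL^2(M)$ with the analogous module structure, and it suffices to show weak containment in the coarse bimodule for each summand separately.

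Next I would invoke Fell absorption. For any orthogonal representation $\pi_K$ of $\Gamma$, the unitary $\delta_g \ot \xi \mapsto \delta_g \ot \pi_K(g^{-1})\xi$ implements $\lambda \ot \pi_K \cong (\dim K)\cdot \lambda$. Hence $\pi \prec \lambda$ forces $\pi \ot \pi \prec \lambda \ot \pi \cong \oplus \lambda$, and inductively $\pi^{\ot n} \prec \lambda$ for every $n \geq 1$. Since $S^n(H_\C)$ is a subrepresentation of $H_\C^{\ot n}$, it too is weakly contained in $\lambda_\Gamma$. Therefore the problem reduces to the following statement: \emph{for any $\Gamma$-representation $K$ with $K \prec \lambda_\Gamma$, the $M$-$M$-bimodule $\bim{M}{K \ot \rL^2(M)}{M}$, with left action $au_g \mapsto \pi_K(g) \ot L(au_g)$ and right action on the second factor only, is weakly contained in the coarse $M$-$M$-bimodule}. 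By the usual approximation of matrix coefficients, it suffices to treat the single case $K = \ell^2(\Gamma)$ with the regular representation.

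Finally, for $K = \ell^2(\Gamma)$, I would apply the change of variable $V : \ell^2(\Gamma) \ot \rL^2(M) \recht \ell^2(\Gamma) \ot \rL^2(M) : \delta_k \ot \eta \mapsto \delta_k \ot u_k^* \eta$. A direct computation shows that after conjugation by $V$, the left action of $u_g$ becomes $\lambda_g \ot 1$, the left action of $a \in N$ becomes the diagonal operator $\delta_k \ot \eta \mapsto \delta_k \ot \al_{k^{-1}}(a) \eta$, and the right $M$-action is unchanged. Combining this with amenability of $N$, which yields a net of unit vectors $\zeta_\alpha \in \rL^2(N) \ot \rL^2(N)$ with $\langle (a \ot b^{\mathrm{op}})\zeta_\alpha, \zeta_\alpha\rangle \recht \tau(ab)$ and $\|x \zeta_\alpha - \zeta_\alpha x\|_2 \recht 0$ for $x \in N$, one assembles, for each $k \in \Gamma$, vectors in $\rL^2(M) \ot \rL^2(M)$ approximating the states on $M \otimes_{\mathrm{alg}} M^{\mathrm{op}}$ coming from $\delta_k \ot \hat{1}$ (the delta functions on the $\ell^2(\Gamma)$-side produce the $\delta_{g,e}$-selector typical of coarse coefficients). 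The main obstacle I expect is this last step: carefully packaging the almost-central vectors from amenability of $N$ together with Fell's unitary to exhibit, for any finite $F \subset M$ and $\eps > 0$, finite families in $\rL^2(M) \ot \rL^2(M)$ whose combined matrix coefficients approximate those of the bimodule uniformly on $F$.
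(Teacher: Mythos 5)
Your approach is essentially identical to the paper's, which also decomposes $\rL^2(\Mtil\ominus M)$ as a direct sum over symmetric tensor powers, uses Fell absorption to reduce to the regular representation, and finishes with amenability of $N$. The one place where you stall is the place where the paper is cleanest: after your Fell unitary $V:\delta_k\ot\eta\mapsto\delta_k\ot u_k^*\eta$, the bimodule you obtain is not just ``something to attack with almost-central vectors'' --- it is unitarily the relative tensor product $\rL^2(M)\ot_N\rL^2(M)$. Explicitly, the assignment $u_k\ot_N\eta\mapsto u_k\eta\ot\delta_k$ is a well-defined isometry (check inner products using $E_N(u_{k'}^*u_k)=\delta_{k,k'}1$) intertwining the $M$-$M$-bimodule structures; this is just your $V^{-1}$ composed with the obvious parametrization of $\rL^2(M)\ot_N\rL^2(M)$ by pairs $(\eta,k)$. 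Once you see this, the last step is the standard fact that for $N$ amenable, the $M$-$M$-bimodule $\rL^2(M)\ot_N\rL^2(M)$ (i.e.\ $\rL^2$ of the basic construction $\langle M,e_N\rangle$) is weakly contained in the coarse $M$-$M$-bimodule, and you need not assemble vectors by hand. So your route is correct, and the identification above is what removes the ``main obstacle'' you anticipated; both proofs are really the same computation.
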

\begin{proof}
Whenever $\eta : \Gamma \recht \cU(\cK)$ is a unitary representation, we define the $M$-$M$-bimodule $\cH^\eta$ on the Hilbert space $\rL^2(M) \ot \cK$ with bimodule structure
$$(a u_g) \cdot (x \ot \xi) \cdot (b u_h) = a u_g x b u_h \ot \eta(g) \xi \quad\text{for all}\quad a,b \in N, g,h \in \Gamma, x \in M, \xi \in \cK \; .$$
If $\eta$ is the regular representation, one checks that $\cH^\eta$ is unitarily equivalent with $\rL^2(M) \ot_N \rL^2(M)$. Because $N$ is amenable we conclude that $\cH^\eta$ is weakly contained in the coarse bimodule whenever $\eta$ is weakly contained in the regular representation.

One checks that $\bim{M}{\rL^2(\Mtil \ominus M)}{M}$ is unitarily equivalent to the direct sum of $\cH^\eta$ where $\eta$ runs through all the symmetric tensor powers of the complexified $\pi$. Since all these symmetric tensor powers are weakly contained in the regular representation, the lemma follows.
\end{proof}

Following Ozawa \cite{Oz03}, a II$_1$ factor $M$ is called solid if $A' \cap M$ is amenable for every diffuse von Neumann subalgebra $A \subset M$. Ozawa proved in \cite{Oz03} that all group von Neumann algebras of icc hyperbolic groups are solid. In particular the free group factors $\rL \F_n$, $n \geq 2$, are solid.
Although this is not needed in the rest of this paper, we reprove Peterson's \cite[Theorem 1.3]{Pe06} on the solidity of the group factors $\rL \Gamma$ when $\Gamma$ is an icc group that admits a proper\footnote{Recall that a $1$-cocycle $b : \Gamma \recht H_\R$ is called proper if $\|b(g)\| \recht \infty$ whenever $g \recht \infty$.} $1$-cocycle into a multiple of the regular representation. Prior to this Popa had given in \cite{Po06b} a new proof for the solidity of the free group factors $\rL \F_n$ using the malleable deformation coming from the word length. Our proof is very close to Popa's, replacing the word length by a proper $1$-cocycle.

It is shown in \cite{CSV07} that for every finite group $H$ and $n \geq 2$, the wreath product $\Gamma = H \wr \F_n := H^{(\F_n)} \rtimes \F_n$ admits a so-called proper wall structure with the stabilizer of every wall being amenable. Hence $\Gamma$ admits a proper $1$-cocycle into an orthogonal representation that is weakly contained in the regular representation. In particular $\rL \Gamma$ is solid by Theorem \ref{thm.solid} below. A more general result is proven by Ozawa in \cite[Corollary 4.5]{Oz04} implying that all wreath products $H \wr \Gamma$ with $H$ amenable and $\Gamma$ hyperbolic, have a solid group von Neumann algebra.

\begin{theorem}[{\cite[Theorem 1.3]{Pe06}}]\label{thm.solid}
Let $\Gamma$ be an icc group that admits a proper $1$-cocycle into an orthogonal representation $\pi$ that is weakly contained in the regular representation. Then $\rL \Gamma$ is solid.
\end{theorem}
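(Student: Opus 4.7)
The strategy is Popa's bimodule/spectral-gap argument using the $s$-malleable deformation $(\al_t)_{t\in\R}$ constructed in Section \ref{subsec.deform}, specialised to $N = \C$ so that $M = \rL\Gamma$ sits inside $\Mtil = D \rtimes \Gamma$. Since $\pi$ is weakly contained in the regular representation and $\C$ is amenable, Lemma \ref{lem.regular} yields that $\bim{M}{\rL^2(\Mtil \ominus M)}{M}$ is weakly contained in the coarse $M$-$M$-bimodule. Given a diffuse subalgebra $A \subset M$ with relative commutant $P = A' \cap M$, the plan is to build a sequence of unit vectors $\xi_n \in \rL^2(\Mtil \ominus M)$ that is both asymptotically $P$-central and asymptotically tracial on $P$. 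Transferring $(\xi_n)$ into the coarse $M$-$M$-bimodule through the weak containment and invoking the standard Connes--Popa characterisation of amenability for finite von Neumann subalgebras (existence of almost-central almost-tracial vectors in the coarse bimodule) will then produce a $P$-hypertrace, and hence amenability of $P$, which is the definition of solidity.

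The vectors are manufactured from properness of $b$. Since $A$ is diffuse, pick a sequence of unitaries $a_n \in A$ with $a_n \recht 0$ weakly. Combining $\|E_M(\al_t(u_g))\|_2 = \exp(-t^2\|b(g)\|^2)$ with properness of $b$ (so $\{g : \|b(g)\| \leq R\}$ is finite for every $R$), a Fourier computation gives $\|E_M(\al_t(a_n))\|_2 \recht 0$ as $n \recht \infty$ for every fixed $t > 0$. A diagonal procedure then produces $t_n \searrow 0$ and unitaries $a_n \in A$ with $\|E_M(\al_{t_n}(a_n))\|_2 < 1/n$. Put $\xi_n = \al_{t_n}(a_n) - E_M(\al_{t_n}(a_n)) \in \rL^2(\Mtil \ominus M)$; then $\|\xi_n\|_2 \recht 1$. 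The asymptotic-tracial property is immediate from $a_n a_n^* = 1$ and $\al_{t_n}$ being a unital $*$-homomorphism on $M$: by trace cyclicity,
\[
\tau(\al_{t_n}(a_n^*)\,x\,\al_{t_n}(a_n)) = \tau(x\,\al_{t_n}(a_n a_n^*)) = \tau(x),
\]
so $\langle x\xi_n,\xi_n\rangle = \tau(x) + O(\|x\|/n)$ for every $x \in M$. The asymptotic-centrality estimate uses $a_nx = xa_n$ for $x \in P$: expanding
\[
x\,\al_{t_n}(a_n) - \al_{t_n}(a_n)\,x = (x - \al_{t_n}(x))\,\al_{t_n}(a_n) + \al_{t_n}(a_n)\,(\al_{t_n}(x) - x)
\]
gives $\|x\xi_n - \xi_n x\|_2 \leq 2\|x - \al_{t_n}(x)\|_2 + 2\|x\|/n \recht 0$, since $t_n \searrow 0$ and $\al_t(x) \recht x$ in \two for every fixed $x \in M$ (by Lemma \ref{lem.equivalent} together with the pointwise convergence $\vphi_t \recht \id$).

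The main obstacle is the final passage from the almost-central almost-tracial sequence $(\xi_n)$ in $\rL^2(\Mtil \ominus M)$ to an actual hypertrace for $P$ inside $\B(\rL^2(M))$. This is where Lemma \ref{lem.regular} enters crucially: the weak containment in the coarse $M$-$M$-bimodule lets one approximate the matrix coefficients $x \mapsto \langle x\xi_n, \xi_n\rangle$ and the commutator norms $\|[x,\xi_n]\|_2$ arbitrarily well by those of vectors in $\rL^2(M) \ot \rL^2(M)$, producing an almost-central almost-tracial sequence there. A standard ultralimit construction (using finiteness of $P$ and the tracial condition) then yields a norm-one projection from $\B(\rL^2(M))$ onto $P$, completing the proof that $P$ is amenable.
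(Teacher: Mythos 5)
Your proof is correct, and it is organized differently from the paper's. The paper argues by contradiction: assuming $P := A' \cap M$ has a nonamenable corner $Pp$, it combines Lemma~\ref{lem.regular} with the no-amenable-direct-summand hypothesis to obtain a spectral gap condition \eqref{eq.nonamenablecond}, deduces from it that $\al_t \recht \id$ uniformly on $(Ap)_1$, and then shows via the Fourier/properness computation that uniform convergence on $\cU(Ap)$ is impossible when $A$ is diffuse. You instead build the amenability witness directly: using diffuseness of $A$ and properness of $b$ you manufacture a sequence $\xi_n = \delta_{t_n}(a_n)$ in $\rL^2(\Mtil \ominus M)$ with $\|\xi_n\|_2 \recht 1$, asymptotically $P$-central, and exactly tracial in the limit (the identity $\tau(\al_{t_n}(a_n)^* x \al_{t_n}(a_n)) = \tau(x)$ being crucial), and then transfer through the weak containment of Lemma~\ref{lem.regular} to obtain a hypertrace for $P$. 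These are contrapositive reorganisations of the same idea, and the Fourier/properness step in your diagonal construction is literally the same computation as the one the paper uses at the end to contradict uniformity. The exact tracial property $\langle x\xi_n,\xi_n\rangle \recht \tau(x)$ (coming from $a_n$ being unitary) is what lets you avoid the paper's corner reduction entirely: in the terminology of Remark~\ref{rem.amenable} it forces $T = 1$, so the central projection $z$ there is $1$ and all of $P$ is amenable at once. Your route skips the intermediate uniform-convergence lemma machinery, so it is slightly shorter; the paper's route is phrased to line up with the deformation/rigidity framework used elsewhere in the article. One cosmetic remark: the passage from almost-central almost-tracial vectors in a bimodule weakly contained in (a multiple of) the coarse $P$-$P$-bimodule to amenability of $P$, while standard, does require the identification of $\rL^2(M)\ot\rL^2(M)$ restricted to $P$ with a multiple of the coarse $P$-$P$-bimodule and a trace-class/ultralimit argument; you flagged this correctly, but in a fully written-out proof it deserves a sentence or a citation.
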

\begin{proof}
Let $A \subset M$ be a diffuse von Neumann subalgebra and assume by contradiction that $A' \cap M$ is nonamenable. So we can take a nonzero projection $p \in \cZ(A' \cap M)$ such that $P := (A' \cap M)p$ has no amenable direct summand.

Let $b : \Gamma \recht H_\R$ be a proper $1$-cocycle into the orthogonal representation $\pi : \Gamma \recht \cO(H_\R)$ that is weakly contained in the regular representation. Consider the one-parameter group of automorphisms $(\al_t)$ of $\Mtil = D \rtimes \Gamma$ as above. By Lemma \ref{lem.regular} the bimodule $\bim{M}{\rL^2(\Mtil \ominus M)}{M}$ is weakly contained in the coarse $M$-$M$-bimodule. By assumption $P$ has no amenable direct summand and we just observed that $\bim{P}{\rL^2(p\Mtil p \ominus pMp)}{P}$ is weakly contained in the coarse $P$-$P$-bimodule. As we explain for the sake of completeness in Remark \ref{rem.amenable} below, it follows that for every $\eps > 0$, there exists a finite subset $\cF \subset (P)_1$ and a $\delta > 0$ such that
\begin{equation}\label{eq.nonamenablecond}
\text{if}\;\; x \in p(\Mtil \ominus M)p \;\;\text{satisfies}\;\; \|x\| \leq 2 \;\;\text{and}\;\; \|[x,y]\|_2 \leq \delta \;\;\text{for all}\;\; y \in \cF \;\;\text{then}\;\; \|x\|_2 \leq \eps \; .
\end{equation}
We prove that $\al_t \recht \id$ uniformly on the unit ball of $Ap$.
Choose $\eps > 0$. Take a finite subset $\cF \subset (P)_1$ and a $\delta > 0$ such that \eqref{eq.nonamenablecond} holds. Put $\delta_0 = \min \{\delta/12,\eps\}$. Take $t > 0$ small enough such that $\|\al_t(p) - p\|_2 \leq \delta_0$ and $\|\al_t(y) - y\|_2 \leq \delta_0$ for all $y \in \cF$. We claim that $\|\al_t(ap) - ap\|_2 \leq 5\sqrt{2}\eps$ for all $a \in (A)_1$. Once this claim is proven, we have shown that $\al_t \recht \id$ uniformly on the unit ball of $Ap$.

To prove the claim, fix $a \in (A)_1$. As before write $\delta_t(d) = \al_t(d) - E_M(\al_t(d))$ for all $d \in M$. Define $x = p \delta_t(a) p$. Since $\|\al_t(p) - p\|_2 \leq \delta_0$, it follows that $\|p \al_t(a) p - \al_t(ap) \|_2 \leq 2\delta_0$ and hence $\|x-\delta_t(ap)\|_2 \leq 4\delta_0$.
Since $\al_t(ap)$ and $\al_t(y)$ commute for all $y \in \cF$, it follows that $\|[\al_t(ap),y]\|_2 \leq 2\delta_0$ and hence $\|[\delta_t(ap),y]\|_2 \leq 4\delta_0$ for all $y \in \cF$. So, $\|[x,y]\|_2 \leq 12\delta_0 \leq \delta$ for all $y \in \cF$. Since $\|x\| \leq 2$ we conclude from \eqref{eq.nonamenablecond} that $\|x\|_2 \leq \eps$. Hence, $\|\delta_t(ap)\|_2 \leq \eps + 4\delta_0 \leq 5\eps$. By Lemma \ref{lem.equivalent} we get that $\|\al_t(ap) - ap\|_2 \leq 5 \sqrt{2}\eps$.

So $\al_t \recht \id$ uniformly on the unit ball of $Ap$. Using the facts that $Ap$ is diffuse and that the $1$-cocycle $b$ is proper, we finally derive a contradiction. Since $\al_t \recht \id$ uniformly on $\cU(Ap)$, we can fix $t > 0$ such that $\|E_M(\al_t(wp))\|_2 \geq \|p\|_2 / 2$ for all $w \in \cU(A)$. Since $A$ is diffuse, we can take a sequence of unitaries $w_n \in \cU(A)$ tending to zero weakly. We prove that $\|E_M(\al_t(w_n p))\|_2 \recht 0$ as $n \recht \infty$. Let $w_n p = \sum_{g \in \Gamma} \lambda^n_g u_g$ be the Fourier expansion of $w_n p$, with $\lambda^n_g \in \C$. Since $w_n p \recht 0$ weakly, for every fixed $g \in \Gamma$ we have $\lambda^n_g \recht 0$ as $n \recht \infty$. Fix $\eps > 0$. Take a finite subset $\cF \subset \Gamma$ such that $\exp(- 2 t^2 \|b(g)\|^2) < \eps$ for all $g \in \Gamma-\cF$. Take $n_0 \in \N$ such that $\sum_{g \in \cF} |\lambda^n_g|^2 < \eps$ for all $n \geq n_0$. So for all $n \geq n_0$ we have
$$\|E_M(\al_t(w_n p))\|_2^2 = \sum_{g \in \Gamma} \exp(-2 t^2 \|b(g)\|^2) |\lambda^n_g|^2 \leq \sum_{g \in \cF} |\lambda^n_g|^2 + \sum_{g \in \Gamma - \cF} \eps |\lambda^n_g|^2 < \eps + \eps \|w_n p\|_2^2 \leq 2 \eps \; .$$
Hence we have shown that $\|E_M(\al_t(w_n p))\|_2 \recht 0$ as $n \recht \infty$ which is a contradiction with the fact that $\|E_M(\al_t(wp))\|_2 \geq \|p\|_2 / 2$ for all $w \in \cU(A)$.
\end{proof}

\begin{remark}\label{rem.amenable}
The following is a detailed explanation for \eqref{eq.nonamenablecond}.
Let $(P,\tau)$ be a tracial von Neumann algebra and $\bim{P}{\cH}{P}$ a $P$-$P$-bimodule. Assume that $\xi_n \in \cH$ is a sequence of vectors and $\kappa,\eps > 0$ are positive numbers such that
\begin{align*}
& \|\xi_n\| \geq \eps \;\;\text{for all}\;\; n \in \N \;\; , \quad \langle x \xi_n,\xi_n \rangle \leq \kappa \tau(x) \;\;\text{for all}\;\; x \in P^+ \quad\text{and}\\
& \lim_n \|x \xi_n - \xi_n x \| = 0 \;\;\text{for all}\;\; x \in P \; .
\end{align*}
We claim that there is a nonzero central projection $z \in \cZ(P)$ such that $z \cH z$ weakly contains the trivial bimodule $\bim{Pz}{\rL^2(Pz)}{Pz}$. By our assumptions we find $T_n \in P^+$ such that $\|T_n\| \leq \kappa$ and $\langle x \xi_n ,\xi_n \rangle = \tau(x T_n)$ for all $x \in P$. After a passage to a subsequence we may assume that $T_n \recht T$ weakly. Since $\|\xi_n\| \geq \eps$ for all $n \in \N$ we have $\tau(T) \geq \eps^2$. So $T \neq 0$. Because $\lim_n \|x \xi_n - \xi_n x \| =0$ for all $x \in P$, it follows that $T \in \cZ(P)$. Since $T$ is positive, take a nonzero central projection $z \in \cZ(P)$ and an element $S \in \cZ(P)^+$ such that $S^2 T = z$. Write $\eta_n = S \xi_n$. It follows that
$$\langle x \eta_n y , a \eta_n b \rangle \recht \tau(x y b^* a^*) = \langle xy,ab\rangle \quad\text{for all}\;\; x,y,a,b \in Pz \; .$$
This proves our claim.
\end{remark}

\subsection{Cocycles into representations that are mixing relative to a family of subgroups}

Assume that $\pi$ is mixing relative to a family $\cS$ of subgroups of $\Gamma$. As before we say that a subset $\cF \subset \Gamma$ is small relative to $\cS$ if $\cF$ can be written as a finite union of $g\Sigma h$, $g,h \in \Gamma$, $\Sigma \in \cS$. Denote by $P_\cF$ the orthogonal projection of $\rL^2(M)$ onto the closed linear span of $\{a u_g \mid a \in N, g \in \cF\}$.

We prove the following technical lemma. It will be useful in Section \ref{sec.normalizing}, but also when proving Proposition \ref{prop.maxuniform} and Theorem \ref{thm.uniform-normalizer} below. The final statement of Lemma \ref{lemma1} \lq controls\rq\ the normalizer of certain von Neumann subalgebras $B \subset p\Mtil p$. The usage of mixing techniques to control normalizers goes back to Popa's \cite[Section 3]{Po03}.

\begin{lemma}\label{lemma1}
Assume that $\pi$ is mixing relative to a family $\cS$ of subgroups of $\Gamma$. Use the notation $P_\cF$ as explained before the lemma.

Let $K \subset D \ominus \C 1$ be a finite dimensional vector subspace. Denote by $Q_K$ the orthogonal projection of $\rL^2(\Mtil)$ onto the closed linear span of $(d \ot a)u_g$, $d \in K$, $a \in N$, $g \in \Gamma$. Note that $Q_K$ is right $M$-modular.

For every finite dimensional subspace $K \subset D \ominus \C 1$, every $x \in (\Mtil)_1$ and every $\eps > 0$, there exists a subset $\cF \subset \Gamma$ that is small relative to $\cS$ such that
$$\|Q_K(v x)\|_2 \leq \|P_\cF(v)\|_2 + \eps \quad\text{for all}\quad v \in (M)_1 \; .$$
In particular, if $p \in M$ is a projection and $B \subset pMp$ is a von Neumann subalgebra satisfying $B \not\prec N \rtimes \Sigma$ for all $\Sigma \in \cS$, then the normalizer of $B$ inside $p \Mtil p$ is contained in $pMp$.
\end{lemma}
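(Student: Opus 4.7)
The plan is a duality argument for the displayed inequality, followed by an algebraic consequence for the normalizer statement.

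Since $Q_K$ is a self-adjoint projection,
\[
\|Q_K(vx)\|_2 \;=\; \sup\bigl\{\,|\langle v, E_M(wx^*)\rangle| \,:\, w \in Q_K\rL^2(\Mtil),\ \|w\|_2 \leq 1\,\bigr\}\, .
\]
Splitting $v = P_\cF(v) + P_{\Gamma-\cF}(v)$ and using $\|E_M(wx^*)\|_2 \leq \|w\|_2 \|x^*\|_\infty \leq 1$, the right-hand side is bounded by $\|P_\cF(v)\|_2 + \|P_{\Gamma-\cF}(E_M(wx^*))\|_2$. Hence it suffices, for each $\eps > 0$, to produce a subset $\cF \subset \Gamma$ small relative to $\cS$ such that $\|P_{\Gamma-\cF}(E_M(wx^*))\|_2 < \eps$ uniformly for $w \in Q_K\rL^2(\Mtil)$ with $\|w\|_2 \leq 1$. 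I first approximate $x$ in $\|\cdot\|_2$ by a finite sum $x_0 = \sum_{j=1}^n (\om(\eta_j)\otimes a_j)\,u_{h_j}$; the trace Cauchy--Schwarz gives $|\langle v, E_M(w(x-x_0)^*)\rangle| \leq \|x-x_0\|_2$, which can be absorbed into $\eps$.

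Expanding $x_0^* = \sum_j (e_j \otimes b_j)\,u_{h_j^{-1}}$ and writing $w = \sum_k w_k u_k$ with $w_k \in K \otimes \rL^2(N)$, using an ONB $\{\xi_l\}$ of $K$ to write $w_k = \sum_l \xi_l \otimes \nu_{k,l}$, the commutation $u_k(e_j \otimes b_j) = (\si_k(e_j) \otimes \al_k(b_j))u_k$ combined with $E_M|_{D \ovt N} = \tau_D \otimes \id_N$ and Cauchy--Schwarz in $l$ yield that the coefficient of $u_m$ in $E_M(w(e_j \otimes b_j))u_{h_j^{-1}}$ has $\|\cdot\|_2$-norm at most $\|b_j\|_\infty \cdot \|P_K^D(\si_{mh_j}(e_j^*))\|_2 \cdot \|w_{mh_j}\|_2$. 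The key technical claim thus reduces to: for every $e \in D$, every finite-dimensional $K \subset D \ominus \C 1$, and every $\delta > 0$ there exists $\cF_0 \subset \Gamma$ small relative to $\cS$ with $\|P_K^D(\si_g(e))\|_2 < \delta$ for all $g \notin \cF_0$. To prove this I use the Wiener chaos decomposition $\rL^2(D) = \bigoplus_{n \geq 0} H_\C^{\odot n}$, on which $\si_g$ acts as the complexified symmetric tensor power $\pi^{\odot n}$. Each $\xi_l \in K \subset D \ominus \C 1$ has no chaos-$0$ component and can be truncated in $\rL^2$ to a finite range $\bigoplus_{n=1}^N H_\C^{\odot n}$ at negligible cost. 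Since $\pi$ is mixing relative to $\cS$, so is each $\pi^{\otimes n}$, and a fortiori each $\pi^{\odot n}$ for $n \geq 1$; this provides the required vanishing of $|\langle \si_g(e), \xi_l^{\leq N}\rangle|$ outside a small-relative-to-$\cS$ set. Summing the bounds over $j$ with $\cF := \bigcup_j \cF_0^{(j)} h_j^{-1}$ finishes the inequality.

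For the normalizer statement, assume $B \not\prec N \rtimes \Sigma$ for every $\Sigma \in \cS$, and let $u \in p\Mtil p$ be a unitary with $uBu^* = B$. Decompose $u = E_{pMp}(u) + u'$, and for each $b \in \cU(B)$ set $b' := u^*bu \in \cU(B)$. The identity $bu = ub'$ rewrites as $bu' - u'b' = E_{pMp}(u)b' - bE_{pMp}(u) \in M$, so since $Q_K$ vanishes on $M$ and is right $M$-modular,
\[
Q_K(bu) \;=\; Q_K(bu') \;=\; Q_K(u'b') \;=\; Q_K(u')\,b'\, ,
\]
hence $\|Q_K(bu)\|_2 = \|Q_K(u')\|_2$ for every $b \in \cU(B)$. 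Applying the already proven inequality with $x = u \in (\Mtil)_1$ and $v = b \in (M)_1$ gives $\|Q_K(u')\|_2 \leq \|P_\cF(b)\|_2 + \eps$; Lemma \ref{lem.intertwining} then provides a net $(b_i) \subset \cU(B)$ with $\|P_\cF(b_i)\|_2 \recht 0$ for every $\cF$ small relative to $\cS$, forcing $Q_K(u') = 0$ for every such $K$. Since the ranges of $Q_K$ over finite-dimensional $K \subset D \ominus \C 1$ jointly exhaust $\rL^2(\Mtil) \ominus \rL^2(M)$, we obtain $u' \in \rL^2(M) \cap p\rL^2(\Mtil)p = \rL^2(pMp)$; combined with $E_{pMp}(u') = 0$, this yields $u' = 0$ and $u \in pMp$.

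The main obstacle will be the technical claim on $\|P_K^D(\si_g(e))\|_2$: an arbitrary finite-dimensional $K \subset D \ominus \C 1$ need not sit in any single Wiener chaos level, so one cannot directly invoke the relative mixing of a single $\pi^{\odot n}$. The combination of a chaos truncation of the ONB vectors of $K$ with the inheritance of relative mixing by symmetric tensor powers of $\pi$ resolves this; the surrounding duality and normalizer arguments are then straightforward bookkeeping.
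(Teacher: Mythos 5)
Your proof is correct and its overall architecture matches the paper's, but there are two genuine differences worth flagging.

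First, for the main inequality you use a duality argument: you write $\|Q_K(vx)\|_2 = \sup_w |\langle v, E_M(wx^*)\rangle|$ and then split $v = P_\cF(v) + P_{\Gamma - \cF}(v)$, reducing matters to bounding $\|P_{\Gamma-\cF}(E_M(wx_0^*))\|_2$ uniformly over $w$ in the unit ball of the range of $Q_K$. The paper works primally: it approximates $x$ by $x_0 = \sum_i (d_i \ot 1) z_i$ and computes $Q_K(vx_0) = \sum_i \sum_g (Q_K(\si_g(d_i)) \ot v_g) u_g z_i$ directly, splitting the $g$-sum over $\cF$ and its complement. Both routes funnel into the same essential fact (smallness of $\|Q_K(\si_g(\cdot))\|_2$ for $g$ outside a small-relative-to-$\cS$ set), so yours is a legitimate variant rather than a different method, but it is a different organization of the computation. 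Second — and this is where your write-up adds genuine content — the paper simply asserts ``Note that $(\si_g)$ on $\rL^2(D \ominus \C 1)$ is mixing relative to $\cS$'' with no proof. You supply the argument via the Wiener chaos decomposition $\rL^2(D) = \bigoplus_{n\geq0} H_\C^{\odot n}$, the action of $\si_g$ as $\pi^{\odot n}$ on each chaos, truncation to finitely many chaos levels, and the stability of relative mixing under (symmetric) tensor powers for $n \geq 1$. That is exactly the fact the paper is silently using, and making it explicit is worthwhile.

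One small point to fix: you say you approximate $x$ in $\|\cdot\|_2$ by $x_0 = \sum_j (\om(\eta_j) \ot a_j) u_{h_j}$ and absorb $\|x - x_0\|_2$ into $\eps$. But for the estimate $|\langle P_\cF(v), E_M(wx_0^*)\rangle| \leq \|P_\cF(v)\|_2$ you also need $\|x_0\| \leq 1$ (you used $\|E_M(wx^*)\|_2 \leq \|w\|_2 \|x^*\| \leq 1$ for the original $x$; the same inequality must hold after replacing $x$ by $x_0$). A $\|\cdot\|_2$-approximation alone does not control $\|x_0\|$, so you should invoke the Kaplansky density theorem, exactly as the paper does, to choose $x_0$ with $\|x_0\| \leq 1$. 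As written, the logical ordering (reduce to the $P_{\Gamma-\cF}$ estimate first, then approximate) doesn't quite close, since the error bound $|\langle v, E_M(w(x-x_0)^*)\rangle| \leq \|x-x_0\|_2$ lives at the level of the full inner product, not the $P_{\Gamma-\cF}$ piece. Doing the Kaplansky approximation before the split repairs this with no other changes.

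For the normalizer statement your argument is correct but a bit more roundabout than the paper's. The paper simply notes that for $x \in \cN_{p\Mtil p}(B)$ and $v_i \in \cU(B)$ with $\|P_\cF(v_i)\|_2 \recht 0$, right $M$-modularity of $Q_K$ together with $x^*v_i x \in B \subset M$ and $x x^* = p$ gives $\|Q_K(x)\|_2 = \|Q_K(x)\,x^*v_ix\|_2 = \|Q_K(v_i x)\|_2 \leq \|P_\cF(v_i)\|_2 + \eps$, so $Q_K(x) = 0$ directly. You instead decompose $u = E_{pMp}(u) + u'$ and track $u'$; this works and lands in the same place, but the paper's one-line identity makes the $E_{pMp}(u)$ bookkeeping unnecessary.
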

\begin{proof}
Fix the finite dimensional subspace $K \subset D \ominus \C 1$, the element $x \in (\Mtil)_1$ and $\eps > 0$. By the Kaplansky density theorem we can take $d_1,\ldots,d_n \in D$ and $z_1,\ldots,z_n \in M$ such that the element
$$x_0 := \sum_{i =1}^n (d_i \ot 1) z_i$$
satisfies $\|x_0\| \leq 1$ and $\|x-x_0\|_2 \leq \eps/2$. Put $\kappa = \max\{\|z_1\|,\ldots,\|z_n\|\}$. Note that $(\si_g)_{g \in \Gamma}$ viewed as a unitary representation of $\Gamma$ on $\rL^2(D \ominus \C 1)$, is mixing relative to $\cS$. So we can take a subset $\cF \subset \Gamma$ that is small relative to $\cS$ such that
$$\|Q_K(\si_g(d_i))\|_2 \leq \frac{\eps}{2n\kappa} \quad\text{for all}\quad g \in \Gamma - \cF \; .$$
We claim that $\cF$ satisfies the conclusion of the lemma. So, take $v \in (M)_1$. We have to prove that $\|Q_K(v x)\|_2 \leq \|P_\cF(v)\|_2 + \eps$.

Since $\|vx - vx_0\|_2 \leq \eps/2$, also $\|Q_K(vx) - Q_K(v x_0)\|_2 \leq \eps/2$ and it suffices to prove that $\|Q_K(v x_0)\|_2 \leq \|P_\cF(v)\|_2 + \eps/2$. Let $v = \sum_{g \in \Gamma} v_g u_g$, with $v_g \in N$, be the Fourier expansion of $v$. A direct computation yields that
$$Q_K(v x_0) = \sum_{i=1}^n \sum_{g \in \Gamma} (Q_K(\si_g(d_i)) \ot v_g) u_g z_i \; .$$
For every $i = 1,\ldots,n$, we have
\begin{align*}
\Bigl\| \sum_{g \in \Gamma - \cF} (Q_K(\si_g(d_i)) \ot v_g) u_g z_i \Bigr\|_2^2 & \leq \Bigl\| \sum_{g \in \Gamma - \cF} (Q_K(\si_g(d_i)) \ot v_g) u_g \Bigr\|_2^2 \; \|z_i\|^2 \\
& = \|z_i\|^2 \; \sum_{g \in \Gamma - \cF} \|Q_K(\si_g(d_i))\|_2^2 \; \|v_g\|_2^2 \\
& \leq \Bigl(\frac{\eps \|z_i\|}{2n\kappa}\Bigr)^2 \; \sum_{g \in \Gamma-\cF} \|v_g\|_2^2 \leq \Bigl(\frac{\eps}{2n}\Bigr)^2 \; .
\end{align*}
Hence,
$$\Bigl\| \sum_{i=1}^n \sum_{g \in \Gamma - \cF} (Q_K(\si_g(d_i)) \ot v_g) u_g z_i \Bigr\|_2 \leq \frac{\eps}{2} \;.$$
On the other hand,
$$\sum_{i =1}^n \sum_{g \in \cF} (Q_K(\si_g(d_i)) \ot v_g) u_g z_i  = Q_K(P_\cF(v) x_0)$$
and $\|Q_K(P_\cF(v) x_0)\|_2 \leq \|P_\cF(v) x_0\|_2 \leq \|P_\cF(v)\|_2$.
Altogether we have shown that $\|Q_K(v x_0)\|_2 \leq  \|P_\cF(v)\|_2 + \eps/2$.

Finally assume that $B \subset pMp$ is a von Neumann subalgebra satisfying $B \not\prec N \rtimes \Sigma$ for all $\Sigma \in \cS$. Lemma \ref{lem.intertwining} provides a net $(v_i)$ of unitaries in $B$ such that $\|P_\cF(v_i)\|_2 \recht 0$ for every subset $\cF \subset \Gamma$ that is small relative to $\cS$. Let $x \in \cN_{p\Mtil p}(B)$. We have to prove that $x \in M$. Let $K \subset D \ominus \C 1$ be an arbitrary finite dimensional subspace. It suffices to prove that $Q_K(x) = 0$. Let $\eps > 0$. Take a subset $\cF \subset \Gamma$ that is small relative to $\cS$ and such that
$$\|Q_K(v x)\|_2 \leq \|P_\cF(v)\|_2 + \eps$$
for all $v \in (M)_1$. Taking $v=v_i$ and using that $Q_K$ is right $M$-modular, it follows that
$$\|Q_K(x)\|_2 = \|Q_K(x) \; x^* v_i x\|_2 = \|Q_K(v_i x)\|_2 \leq \|P_\cF(v_i)\|_2 + \eps \recht \eps \; .$$
Hence $\|Q_K(x)\|_2 \leq \eps$ for every $\eps > 0$. So $Q_K(x) = 0$.
\end{proof}

\begin{proposition}\label{prop.maxuniform}
Assume that $\pi$ is mixing relative to a family $\cS$ of subgroups of $\Gamma$ and that $b$ is bounded on every $\Sigma \in \cS$.
Let $p \in M$ be a projection and $B \subset pMp$ a von Neumann subalgebra generated by a group of unitaries $\cG \subset \cU(B)$. Denote by $Q$ the normalizer of $B$ inside $pMp$.

Let $q \in \cZ(Q)$ be the maximal projection given by Lemma \ref{lem.maxuniform-general} such that $\al_t \recht \id$ in \two uniformly on the unit ball of $B q$. There exists a sequence of unitaries $(w_n)$ in $\cG$ such that for every $t > 0$ we have that $\|E_M(\al_t(w_n(p-q)))\|_2 \recht 0$ as $n \recht \infty$.
\end{proposition}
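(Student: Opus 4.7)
The plan is a proof by contradiction against the maximality of $q$ provided by Lemma \ref{lem.maxuniform-general}. Set $p' := p - q$. Since the Fourier expansion $\|E_M(\al_t(x))\|_2^2 = \sum_{g \in \Gamma} e^{-2t^2 \|b(g)\|^2}\|x_g\|_2^2$ is monotonically non-increasing in $t > 0$, a diagonal argument over $t \in \{1/k \mid k \geq 1\}$ reduces the production of a single sequence $(w_n)$ working for every $t > 0$ to the pointwise statement: for every fixed $t > 0$, $\inf_{w \in \cG} \|E_M(\al_t(w p'))\|_2 = 0$. I assume this fails at some $t_0 > 0$, so there is $\delta > 0$ with $\|E_M(\al_{t_0}(wp'))\|_2 \geq \delta$ for every $w \in \cG$. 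Using $e^{-a} \geq e^{-2a}$ for $a \geq 0$ term by term in the Fourier expansion, this yields $\Re\,\tau(p' w^* \al_{t_0}(wp')) = \langle \vphi_{t_0^2}(wp'), wp' \rangle \geq \|\vphi_{t_0^2}(wp')\|_2^2 \geq \delta^2$ for every $w \in \cG$.

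Next I extract a Popa-style intertwiner via convex-hull averaging, in the spirit of Proposition \ref{prop.extenduniform}. Let $\cK$ be the weak-closed convex hull of $\{w^* \al_{t_0}(wp') \mid w \in \cG\}$ inside $(p\Mtil)_1$. By the previous lower bound, every $T \in \cK$ satisfies $\Re\,\tau(p'T) \geq \delta^2$, so the unique element $T_0 \in \cK$ of minimal $\|\cdot\|_2$-norm is non-zero. Since $q \in \cZ(Q)$, $p'$ commutes with $\cG \subset \cU(Q)$; and since $\cG$ is a group, the identity $v^*(w^* \al_{t_0}(wp'))\al_{t_0}(v) = (wv)^* \al_{t_0}((wv)p')$ shows that each $L^2$-isometry $S_v : T \mapsto v^* T \al_{t_0}(v)$, $v \in \cG$, preserves $\cK$ and hence fixes $T_0$. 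This yields $vT_0 = T_0 \al_{t_0}(v)$ for every $v \in \cG$, and by normality for every $v \in \cU(B)$. Replacing $T_0$ by $p'T_0$ (still non-zero and still intertwining, as $p'$ commutes with $\cU(B)$), the polar decomposition $T_0 = V|T_0|$ produces a non-zero partial isometry $V \in p'\Mtil$ with $VV^* \leq p'$ and $V^*V \leq \al_{t_0}(p')$, satisfying $vV = V \al_{t_0}(v)$ for all $v \in \cU(B)$, and with $|T_0|$ commuting with $\al_{t_0}(\cU(B))$.

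I conclude by a case analysis. If $Bp' \prec N \rtimes \Sigma$ for some $\Sigma \in \cS$, the boundedness of $b$ on $\Sigma$ (by some $\kappa$) gives $\|\al_t(au_\sigma) - au_\sigma\|_2 \leq t\kappa\sqrt{2}\|au_\sigma\|_2$, so $\al_t \to \id$ in $\|\cdot\|_2$ uniformly on $(N \rtimes \Sigma)_1$ and on $(\M_n(\C) \ot (N \rtimes \Sigma))_1$; the standard transport through the intertwining partial isometry of Theorem \ref{thm.intertwining} (which itself lies in $M$, hence is individually almost fixed by $\al_t$) yields uniform convergence of $\al_t$ on $(Bq_0)_1$ for some non-zero $q_0 \leq p'$ in $(Bp')' \cap p'Mp'$. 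Otherwise $Bp' \not\prec N \rtimes \Sigma$ for every $\Sigma \in \cS$; Lemma \ref{lem.intertwining} then produces a net $(w_i)$ in $\cG$ with $\|P_\cF(w_i p')\|_2 \to 0$ for every $\cF \subset \Gamma$ small relative to $\cS$, and applying Lemma \ref{lemma1} to $T_0$ (combined with the intertwining $w_i T_0 = T_0 \al_{t_0}(w_i)$ and the $s$-malleability $\beta \circ \al_{t_0} = \al_{-t_0} \circ \beta$ to double $V$ with $\beta(V)$ into an honest element of $M$) ultimately yields uniform convergence of $\al_t$ on a corner of $Bp'$. In either case, Proposition \ref{prop.extenduniform} extends the uniform convergence to $B\tilde q$ for some $\tilde q \in \cZ(Q)$ with $\tilde q(p-q) \neq 0$, so $q \vee \tilde q$ strictly dominates $q$ while still satisfying the uniform convergence property, contradicting the maximality of $q$. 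The main obstacle I anticipate is the non-intertwining case: the partial isometry $V$ a priori lives in $\Mtil$, and both the mixing property (through Lemma \ref{lemma1}) and the $s$-malleable doubling trick are needed to transport the intertwining information back into $M$ in a form that yields uniform convergence on a non-zero corner.
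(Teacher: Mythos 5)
Your skeleton matches the paper's: reduce by a diagonal argument to the pointwise statement, argue by contradiction, average $\{w^*\al_{t_0}(wp')\}$ to build a nonzero $\Mtil$-intertwiner $T_0$ with $vT_0 = T_0\al_{t_0}(v)$, and then contradict the maximality of $q$ via uniform convergence on a corner. The diagonal reduction, the lower bound $\Re\tau(p'w^*\al_{t_0}(wp')) \geq \delta^2$, and the convex-hull extraction are all correct. The split into ``$Bp' \prec N \rtimes \Sigma$ for some $\Sigma$'' versus ``not'' is also correct (the paper does this up front via Proposition~\ref{prop.maxintertwine}, replacing $p$ by $p - q_0$, rather than as a later dichotomy; either organization works, and your Case~A is fine).

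The genuine gap is in Case~B, which you flag but do not close. The ``doubling $V$ with $\beta(V)$'' idea is not what the paper uses and, as written, does not obviously produce an element of $M$: $\beta(V)$ still lives in $\Mtil$, and even a doubled intertwiner only relates $B$ to $\al_{2t_0}(B)$ inside $\Mtil$, which does not by itself give uniform convergence of $\al_t$ on a corner of $B$. The paper instead does the following: take the polar part $V$ of $T_0$, and use Lemma~\ref{lemma1} (with $B \not\prec N \rtimes \Sigma$) to conclude $r := VV^* \in B' \cap pMp$; then, since $\beta$ fixes $M$ pointwise, $\beta(r) = r$ and the relation $T_0^* = \al_{t_0}(\beta(T_0))$ gives $V^*V = \al_{t_0}(r)$. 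The punch line is Lemma~\ref{noglemma}: for $x = V$, $y = V^*$ it yields an $s > 0$ with $\|E_M(V\al_{t_0}(a)V^*)\|_2 \leq \|E_M(\al_s(a))\|_2 + \eps$ for all $a \in (M)_1$; applied to $a = br$, the exact identity $V\al_{t_0}(br)V^* = br$ (a consequence of $bV = V\al_{t_0}(b)$, $VV^* = r$, $V\al_{t_0}(r) = V$) turns this into $\|br\|_2 - \|E_M(\al_s(br))\|_2 \leq \eps$ uniformly in $b \in (B)_1$, i.e. $\al_t \recht \id$ uniformly on $(B)_1 r$. That is the step missing from your proposal. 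A further technical point worth noting: the adjoint relation $T_0^* = \al_{t_0}(\beta(T_0))$ holds because the averaging set $\{p'b^*\al_{t_0}(p'b)\}$ (equivalently $\{b^*p'\al_{t_0}(p'b)\}$, using that $p'$ commutes with $\cG$) is itself stable under $T \mapsto \al_{t_0}(\beta(T^*))$; your original set $\{w^*\al_{t_0}(wp')\}$ is not, and post-multiplying the minimal element by $p'$ does not obviously restore this symmetry, so you should set up the averaging as the paper does.
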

\begin{proof}
By Proposition \ref{prop.maxintertwine} let $q_0 \in \cZ(Q)$ be the maximal projection such that $(B q_0)_1 \almost N \rtimes \cS$. Since $b$ is bounded on every subgroup $\Sigma \in \cS$ and hence on every subset $\cF \subset \Gamma$ that is small relative to $\cS$, one checks easily that $\al_t \recht \id$ in \two uniformly on the unit ball of $B q_0$. So in order to prove the proposition we may replace $p$ by $p-q_0$ and $B$ by $B(p-q_0)$ and assume that $B \not\prec N \rtimes \Sigma$ for all $\Sigma \in \cS$.

Assume that there is no sequence of unitaries $(w_n)$ in $\cG$ such that $\|E_M(\al_t(w_n(p-q)))\|_2 \recht 0$ for every $t > 0$. So, we find $s,\delta > 0$ such that $\|E_M(\al_s(b(p-q)))\|_2 \geq \delta$ for all $b \in \cG$. Put $t = \sqrt{2} s$. This means that
$$\tau(b^*(p-q) \al_{t}((p-q)b)) = \|E_M(\al_s(b(p-q)))\|_2^2 \geq \delta^2 \quad\text{for all}\;\; b \in \cG \; .$$
Hence the element $w \in \Mtil$ of minimal \two in the weakly closed convex hull of $$\{b^*(p-q) \al_{t}((p-q)b) \mid b \in \cG\}$$ is nonzero and satisfies $b w = w \al_t(b)$ for all $b \in B$ and $w^* = \al_t(\be(w))$. Here $\beta$ denotes the automorphism of $\Mtil$ satisfying $\beta(x) = x$ for all $x \in M$ and $\beta(\om(\xi)) = \om(\xi)^*$ for all $\xi \in H_\R$. Recall that $\beta \circ \al_t = \al_{-t} \circ \beta$. Also, by construction $q w = 0$. We shall prove that the support projection $r$ of $w w^*$ satisfies $r \leq q$, hence reaching a contradiction. So we have to prove that $r \in B' \cap pMp$ and that $\al_t \recht \id$ uniformly on $(B)_1 r$.

Let $v$ be the polar part of $w$. It follows that $vv^* = r$ and $b v = v \al_t(b)$ for all $b \in B$. It also follows that $v^* v$ is the support projection of $w^* w$. Since $w^* = \al_t(\beta(w))$, we have $w^* w = \al_t(\beta(ww^*))$ and conclude that $v^* v = \al_t(\beta(r))$.
By Lemma \ref{lemma1} and because $B \not\prec N \rtimes \Sigma$ for all $\Sigma \in \cS$, we get that $B' \cap p\Mtil p = B' \cap pMp$. So, $r \in B' \cap pMp$. Since $\beta(x) = x$ for all $x \in M$, we have in particular that $\beta(r) = r$. We have seen above that $v^* v = \al_t(\beta(r))$ and hence $v^* v = \al_t(r)$.

Choose $1 \geq \eps > 0$. By Lemma \ref{noglemma} take $t' > 0$ such that
$$\|E_M(v \al_t(a) v^*)\|_2 \leq \|E_M(\al_{t'}(a))\|_2 + \eps \quad\text{for all}\;\; a \in (M)_1 \; .$$
Applying this to $a = b r$ for $b \in (B)_1$ it follows that $\|br\|_2 - \|E_M(\al_{t'}(br)\|_2 \leq \eps$ for all $b \in (B)_1$. So we have shown that $\|br\|_2 - \|E_M(\al_{t}(br))\|_2 \recht 0$ uniformly in $b \in (B)_1$. This precisely means that $\al_t \recht \id$ in \two uniformly on $(B)_1 r$.
\end{proof}

\subsection{\boldmath From uniform convergence on $B$ to uniform convergence on the normalizer}

In \cite[Theorem 4.5]{Pe06} and \cite[Theorem 2.5]{CP10} Peterson and Chifan-Peterson prove that if $B \subset pMp$ is a von Neumann subalgebra such that $\al_t \recht \id$ uniformly on the unit ball of $B$ and such that for all $\Sigma \in \cS$ we have $B \not\prec N \rtimes \Sigma$, then $\al_t \recht \id$ uniformly on the unit ball of the normalizer of $B$. They prove this theorem using the technology of unbounded derivations. We repeat their proof, but using the malleable dilation technology from paragraph \ref{subsec.deform}, which makes things slightly easier.

\begin{theorem}[{\cite[Theorem 4.5]{Pe06} and \cite[Theorem 2.5]{CP10}}]\label{thm.uniform-normalizer}
Assume that $\pi$ is mixing relative to a family $\cS$ of subgroups of $\Gamma$.
Let $p \in M$ be a projection and $B \subset pMp$ a von Neumann subalgebra. Let $r \in B' \cap pMp$ be a projection and make the following assumptions.
\begin{itemize}
\item If $t \recht 0$ then $\al_t \recht \id$ in \two uniformly on $(Br)_1$.
\item For every $\Sigma \in \cS$ we have $Br \not\prec N \rtimes \Sigma$.
\end{itemize}
Denote by $Q$ the normalizer of $B$ inside $pMp$.
Define $q$ as the smallest projection in $\cZ(Q)$ satisfying $r \leq q$. Then $\al_t \recht \id$ in \two uniformly on $(Qq)_1$.
\end{theorem}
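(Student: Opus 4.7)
The plan is to adapt Popa's intertwiner-by-bimodules technique to the malleable dilation of \S\ref{subsec.deform}. I would proceed in three phases.

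\emph{Phase 1: Reduce to $r = q$.} Proposition \ref{prop.extenduniform} immediately extends uniform convergence from $(Br)_1$ to $(Bq)_1$. For the intertwining hypothesis, I would show that $Br \not\prec N \rtimes \Sigma$ for every $\Sigma \in \cS$ implies $Bq \not\prec N \rtimes \Sigma$ for every $\Sigma \in \cS$: if $v \in \M_{1,n}(\C) \ot qM$ is a nonzero intertwiner realising $Bq \prec N \rtimes \Sigma$, then for each $u \in Q$ the product $uv$ intertwines $B$, and assuming $vv^* \cdot u^* r u = 0$ for every $u \in Q$ would force $v v^* \cdot \bigvee_{u \in Q} u^* r u = v v^* \cdot q = vv^* = 0$, using that $q$ is the central support of $r$ in $\cZ(Q)$. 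This contradiction yields some $u \in Q$ with $ruv \neq 0$, which one checks provides an intertwiner witnessing $Br \prec N \rtimes \Sigma$.

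\emph{Phase 2: Intertwiner and commutation.} Assume from here on that $r = q$. Given $\eps > 0$, choose $t > 0$ with $\|\alpha_t(b) - b\|_2 \leq \eps$ on $(Bq)_1$ and take $v$ to be the element of smallest 2-norm in the $\|\,\cdot\,\|_2$-closed convex hull of $\{ b q \, \alpha_t(q b^*) \mid b \in \cU(B)\}$. As at the end of the proof of Proposition \ref{prop.extenduniform}, this gives $v \in q\Mtil\alpha_t(q)$ with $\|v\| \leq 1$, $\|v - q\|_2 \leq \eps$, and $bv = v\alpha_t(b)$ for every $b \in B$ (equivalently, $v^* b = \alpha_t(b) v^*$). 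For each unitary $u \in \cU(Qq)$, set $x := v^* u v \, \alpha_t(u)^* \in \alpha_t(q)\Mtil\alpha_t(q)$. A direct calculation using $u^* b u \in B$ yields $x\, \alpha_t(b) = \alpha_t(b)\, x$ for every $b \in B$, so $x$ lies in $\alpha_t\bigl((Bq)' \cap q\Mtil q\bigr)$. Lemma \ref{lemma1} applied to $Bq \subset qMq$ then gives $(Bq)' \cap q\Mtil q = (Bq)' \cap qMq$, so $\alpha_{-t}(x) \in qMq \subset M$.

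\emph{Phase 3: Final estimate.} A short calculation using $\|v - q\|_2 \leq \eps$ and $\|\alpha_t(q) - q\|_2 \leq \eps$ yields $\|x - u \alpha_t(u)^*\|_2 \leq C\eps$ for some universal constant $C$. Applying $\alpha_{-t}$ (trace-preserving) and then $E_M$, together with $E_M(\alpha_{-t}(u)) = \vphi_{t^2}(u)$ for $u \in M$, gives $\|\delta_{-t}(u)\, q\|_2 \leq 2C\eps$; combined with the automatic bound $\|\delta_{-t}(u)(1-q)\|_2 \leq \|\alpha_{-t}(q) - q\|_2 \leq \eps$, this yields $\|\delta_{-t}(u)\|_2 \leq C'\eps$. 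Lemma \ref{lem.equivalent} then delivers $\|\alpha_t(u) - u\|_2 \leq C''\eps$, uniformly in $u \in \cU(Qq)$.

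The main obstacle I anticipate is Phase 1: this is where the precise structural hypothesis on $q$ (as the smallest projection in $\cZ(Q)$ above $r$) enters, bridging a hypothesis stated only for the $r$-corner and a conclusion that involves the whole $Qq$. Phases 2 and 3 are then clean deployments of the malleable dilation machinery and Lemma \ref{lemma1}; the only extra subtlety in Phase 3 is keeping track of $\alpha_t(q) \neq q$ when estimating 2-norms against $\alpha_t(u)^*$, which is handled by splitting $\delta_{-t}(u)$ along $q$ and $1-q$.
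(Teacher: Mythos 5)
There is a real gap in Phase 2. You form $x := v^* u v \,\al_t(u)^*$ for $u \in \cU(Qq)$ and assert via "$u^*bu\in B$" that $x$ commutes with $\al_t(Bq)$. But $Q = \cN_{pMp}(B)''$, and a generic unitary of $Qq$ does \emph{not} normalize $Bq$: the normalizing unitaries generate $Q$ as a von Neumann algebra without exhausting $\cU(Q)$, so the commutation $x\,\al_t(bq)=\al_t(bq)\,x$ is simply not available for arbitrary $u\in\cU(Qq)$. The patch is to run Phases 2 and 3 only for $u$ in the group $\cN_{pMp}(B)q$, which does normalize $Bq$, and then at the very end upgrade uniform convergence from $\cN_{pMp}(B)q$ to $(Qq)_1$ by one more application of Proposition \ref{prop.extenduniform}. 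That closing extension step is missing from your outline, and it is precisely how the paper phrases the reduction ("it suffices to prove that $\delta_t\to 0$ uniformly on $\cN_Q(B)q$"). The same correction should be made in Phase 1: take $u\in\cN_{pMp}(B)$ rather than $u\in Q$, which is harmless because $\bigvee_{u\in\cN_{pMp}(B)}uru^*$ still equals $q$ (this supremum lies in $\cN_{pMp}(B)'\cap Q = \cZ(Q)$ and dominates $r$, hence dominates $q$; the reverse inequality is clear).

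Modulo that, your argument takes a genuinely different route from the paper's proof. You build a concrete intertwiner $v$ by Popa's convex-hull averaging (as in Propositions \ref{prop.extenduniform} and \ref{prop.maxuniform}), place $\al_{-t}(x)$ inside $(Bq)'\cap q\Mtil q=(Bq)'\cap qMq$ using the \emph{final} statement of Lemma \ref{lemma1}, and close with a norm estimate; this is really the mechanism of Proposition \ref{prop.maxuniform}. The paper instead stays entirely quantitative: from Lemma \ref{lem.intertwining} and Proposition \ref{prop.maxintertwine} it produces a net $(v_i)\subset\cU(B)$ with $\|P_\cF(v_iq)\|_2\to 0$, writes $uq=v_iq\,u\,d_iq$ with $d_i=u^*v_i^*u$, and plugs the \emph{first}, quantitative, $Q_K$-estimate of Lemma \ref{lemma1} directly into the bound for $\|\delta_t(uq)\|_2$ before passing to the limit over $i$. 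Likewise your Phase 1 differs: the paper obtains $Bq\not\prec N\rtimes\Sigma$ from Proposition \ref{prop.maxintertwine} and Lemma \ref{lem.approx-vs-prec} (showing the maximal projection $q_0\in\cZ(Q)$ with $(Bq_0)_1\almost N\rtimes\cS$ is orthogonal to $r$, hence to $q$), whereas you push intertwiners forward directly. Your route works but needs a small additional step: after finding $u$ with $ruv\neq 0$, pass to the polar part of $ruv$ (or test directly against the second criterion of Theorem \ref{thm.intertwining}) to turn $b\cdot(ruv)=(ruv)\,\theta(u^*bu\,q)$ into a bona fide witness of $Br\prec N\rtimes\Sigma$.
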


\begin{proof}
From Lemma \ref{lem.intertwining} and Proposition \ref{prop.maxintertwine} it follows that $Bq \not\prec N \rtimes \Sigma$ for all $\Sigma \in \cS$ and that we can take a net of unitaries $(v_i)_{i \in I}$ in $\cU(B)$ such that $\lim_i \|P_\cF(v_i q)\|_2 = 0$ for every subset $\cF \subset \Gamma$ that is small relative to $\cS$. By Lemma \ref{lem.maxuniform-general} we have $\al_t \recht \id$ in \two uniformly on the unit ball of $Bq$.

For every $x \in M$, put $\delta_t(x) = \al_t(x) - E_M(\al_t(x))$. By Lemma \ref{lem.equivalent} and Proposition \ref{prop.extenduniform}
it suffices to prove that $\delta_t \recht 0$ in \two uniformly on $\cN_Q(B)q$. Choose $\eps > 0$. Put $\delta = \eps^2/(4 \tau(q))$. Take $t > 0$ such that
$\|\al_t(bq) - bq \|_2 \leq \delta \|q\|_2$ for all $b \in \cU(B)$.
We show that $\|\delta_t(uq)\|_2 \leq \eps$ for all $u \in \cN_Q(B)$, hence proving the claim. To prove this statement, fix $u \in \cN_Q(B)$. Using the notation in Lemma \ref{lemma1} take a finite dimensional subspace $K \subset D \ominus \C1$ such that $\|(1-Q_K)\delta_t(uq)\|_2 \leq \delta \|q\|_2$. By Lemma \ref{lemma1} take a subset $\cF \subset \Gamma$ that is small relative to $\cS$ and such that $\|Q_K(v \al_t(u))\|_2 \leq \|P_\cF(v)\|_2 + \delta \|q\|_2$ for all $v$ in the unit ball of $M$.

Write $d_i := u^* v_i^* u$ and note that $d_i \in \cU(B)$. By construction $uq = v_i q \, u \, d_i q$ for all $i \in I$. By our choice of $t$ we have
$\|\al_t(v_i q) - v_iq \|_2 \leq \delta \|q\|_2$ and $\|\al_t(d_i q) - d_i q\|_2 \leq \delta \|q\|_2$ for all $i \in I$. Using the fact that $Q_K$ is right $M$-modular, it
follows that for all $i \in I$,
\begin{align*}
\|\delta_t(uq)\|_2^2 &= \langle \al_t(uq),\delta_t(uq)\rangle = \langle \al_t(v_iq \, u \, d_iq) , \delta_t(uq) \rangle \\
&\leq |\langle v_i q \; \al_t(u) \; d_i q , \delta_t(uq) \rangle| + 2\delta \tau(q) \leq |\langle v_i q \; \al_t(u) \; d_i q, Q_K(\delta_t(uq)) \rangle| +3\delta \tau(q) \\
&= |\langle Q_K(v_i q \; \al_t(u) \; d_i q) , \delta_t(uq) \rangle| +3\delta \tau(q) \leq \|Q_K(v_i q \; \al_t(u)) \; d_i q\|_2 \; \|q\|_2 +3\delta \tau(q) \\
&\leq \|Q_K(v_i q \; \al_t(u))\|_2 \; \|q\|_2 +3\delta \tau(q) \leq \|P_\cF(v_i q)\|_2 \; \|q\|_2 + 4 \delta \tau(q) \; .
\end{align*}
Taking the limit over $i \in I$ it follows that $\|\delta_t(uq)\|_2^2 \leq 4 \delta \tau(q) = \eps^2$. We have shown that $\delta_t \recht 0$ in \two uniformly on $\cN_Q(B)q$.
\end{proof}

\section{Deformations must be uniform when they are uniform on enough normalizing unitaries}\label{sec.normalizing}

Let $\Gamma \actson (N,\tau)$ be a trace preserving action and put $M = N \rtimes \Gamma$. Assume that $\pi : \Gamma \recht \cO(H_\R)$ is an orthogonal representation that is mixing relative to a family $\cS$ of subgroups of $\Gamma$. Let $b : \Gamma \recht H_\R$ be a $1$-cocycle w.r.t.\ $\pi$ and assume that $b$ is bounded on every $\Sigma \in \cS$. As above we call a subset $\cF \subset \Gamma$ small relative to $\cS$ if $\cF$ can be written as a finite union of subsets of the form $g \Sigma h$, $g, h \in \Gamma$, $\Sigma \in \cS$. Whenever $\cF \subset \Gamma$ we denote by $P_\cF$ the orthogonal projection of $\rL^2(M)$ onto the closed linear span of $\{a u_g \mid a \in N, g \in \Gamma\}$.

Consider the algebra $\Mtil \supset M$ together with the $1$-parameter group of automorphisms $(\al_t)$ of $\Mtil$ as in paragraph \ref{subsec.deform}.
We say that $\al_t \recht \id$ in \two uniformly on the tail of a net $(v_i)_{i \in I}$ if for every $\eps > 0$, there exists a $t_0 > 0$ and $i_0 \in I$ such that $\|v_i - \al_t(v_i)\|_2 < \eps$ for all $i \geq i_0$ and all $|t| < t_0$.

In this section we prove the following result: if $B \subset pMp$ is an abelian von Neumann subalgebra that is normalized by \lq enough\rq\ unitaries $(v_i)$ such that $\al_t \recht \id$ uniformly on the tail of $(v_i)$, then $\al_t \recht \id$ uniformly on the unit ball of $B$. In the special where $\cS$ consists of normal subgroups of $\Gamma$ and using the technology of unbounded derivations and ultrapowers, this theorem was first proven by Chifan-Peterson in \cite[Theorem 3.2]{CP10} and several of the ideas go back to Peterson's proof of \cite[Theorem 4.1]{Pe09}.

\begin{theorem}\label{thm.uniform-abelian}
Let $p \in M$ be a projection and $B \subset pMp$ an abelian von Neumann subalgebra that is normalized by a net of unitaries $(v_i)_{i \in I}$ in $\cU(pMp)$. Let $r \in pMp$ be any projection and make the following assumptions.
\begin{itemize}
\item If $t \recht 0$ then $\al_t \recht \id$ in \two uniformly on the tail of $(v_i)_{i \in I}$.
\item For every subset $\cF \subset \Gamma$ that is small relative to $\cS$, we have $\lim_i \|P_\cF(v_i r)\|_2 = 0$.
\end{itemize}
Denote by $Q$ the normalizer of $B$ inside $pMp$ and define $q$ as the smallest projection in $\cZ(Q)$ that satisfies $r \leq q$. Then $\al_t \recht \id$ in \two uniformly on $(Bq)_1$.
\end{theorem}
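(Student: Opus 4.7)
The plan is to follow the strategy of \cite[Theorem 3.2]{CP10}, replacing their unbounded derivation with the malleable deformation $\al_t$ constructed in Section \ref{subsec.deform}, and structurally mirroring the proof of Theorem \ref{thm.uniform-normalizer} above; the group-theoretic Case~2 from the introduction serves as the conceptual template. First, Proposition \ref{prop.extenduniform} applied to $\cG := \cU(B)$ reduces the goal to uniform convergence of $\al_t \to \id$ on $\{rur : u \in \cU(B)\}$. Fix $\eps > 0$; by the first hypothesis together with Lemma \ref{lem.equivalent} choose $\delta \ll \eps$, $t_0 > 0$, and $i_0 \in I$ such that $\|\al_t(v_i) - v_i\|_2 \leq \delta$ whenever $|t| < t_0$ and $i \geq i_0$.

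For $u \in \cU(B)$, the element $u'_i := v_iuv_i^*$ lies in $\cU(B)$ (since $v_i$ normalizes $B$), and $u = v_i^* u'_i v_i$, whence $rur = rv_i^* u'_i v_i r$---the operator-algebraic analog of $h = h_n h h_n^{-1}$, with abelianness of $B$ supplying the commutation $u u'_i = u'_i u$. By taking adjoints, the second hypothesis yields also $\|P_\cF(rv_i^*)\|_2 \recht 0$ for every $\cF \subset \Gamma$ small relative to $\cS$.

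The main computation mimics that of Theorem \ref{thm.uniform-normalizer}. One expands
$$\|\delta_t(rur)\|_2^2 = \langle \al_t(rur), \delta_t(rur)\rangle = \langle r \al_t(v_i^*) \al_t(u'_i) \al_t(v_i) r, \delta_t(rur)\rangle,$$
replaces each $\al_t(v_i^{\pm 1})$ by $v_i^{\pm 1}$ at cost $O(\delta)$, chooses a finite dimensional $K \subset D \ominus \C 1$ with $\|(I - Q_K)\delta_t(rur)\|_2 \leq \delta$ (possible because $\delta_t(rur) \in \rL^2(\Mtil \ominus M)$), and substitutes $Q_K(\delta_t(rur))$ for $\delta_t(rur)$. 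Right $M$-modularity of $Q_K$ (using $v_i r \in M$) combined with Lemma \ref{lemma1} applied to $v := rv_i^* \in (M)_1$ and $x := \al_t(u'_i) \in (\Mtil)_1$ yields $\cF \subset \Gamma$ small relative to $\cS$ such that $\|Q_K(rv_i^* \al_t(u'_i))\|_2 \leq \|P_\cF(rv_i^*)\|_2 + \delta$. Selecting $i \geq i_0$ large enough that $\|P_\cF(rv_i^*)\|_2 \leq \delta$ then closes the bound: $\|\delta_t(rur)\|_2^2 \leq C\delta \leq \eps^2$.

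The main technical obstacle I anticipate is that the set $\cF$ produced by Lemma \ref{lemma1} depends on $\al_t(u'_i)$, and hence on $i$, whereas for the second hypothesis we need $\cF$ fixed before the $i$ is chosen. The resolution, as in \cite[Proof of Theorem 3.2]{CP10}, hinges on the approximation $\al_t(u'_i) \approx v_i \al_t(u) v_i^*$ (valid up to error $O(\delta)$, from $\|\al_t(v_i)-v_i\|_2 \leq \delta$): this allows one to apply Lemma \ref{lemma1} instead with the $i$-independent choice $x := \al_t(u)$, after absorbing the extra $v_i$'s coming from the approximation into the flanking $rv_i^*$ and $v_i r$ factors using $rv_i^* \cdot v_i = r$. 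Once this bookkeeping is in place the argument goes through and the final bound on $\|\delta_t(rur)\|_2$ is uniform in $u$, giving $\al_t \to \id$ uniformly on $(Bq)_1$ via the initial reduction.
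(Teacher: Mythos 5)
Your opening reduction to uniform convergence on $r\,\cU(B)\,r$ via Proposition \ref{prop.extenduniform} is correct, and you have correctly put your finger on the obstacle: applying Lemma \ref{lemma1} to $v := rv_i^*$ and $x := \al_t(u'_i)$ produces a set $\cF$ depending on $i$, which forbids letting $i \to \infty$ against the fixed-$\cF$ hypothesis $\|P_\cF(v_ir)\|_2 \to 0$. But your proposed fix is vacuous. Replacing $\al_t(u'_i)$ by $v_i\al_t(u)v_i^*$ and cancelling against $rv_i^*$ and $v_ir$ turns $rv_i^*\,\al_t(u'_i)\,v_ir$ into $r\al_t(u)r$: the $v_i$'s are gone and the escape information $\|P_\cF(v_ir)\|_2 \to 0$ has been discarded. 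Lemma \ref{lemma1} with $v := r$ then yields $\|Q_K(r\al_t(u))\|_2 \leq \|P_\cF(r)\|_2 + \delta$, which is useless since $\|P_\cF(r)\|_2 \geq \tau(r) > 0$ for any $\cF$ containing a subgroup. In your own group-theoretic template the whole point of Case~2 is that $h_0$ is \emph{chosen} so that $h_n := g_nh_0g_n^{-1}$ escapes; for an arbitrary $h$ there is no such guarantee, and this dichotomy cannot be finessed away by rewriting $u = v_i^*(v_iuv_i^*)v_i$, which is an empty identity.

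The paper's proof is therefore, unavoidably, a two-case argument set inside a proof by contradiction. One first cuts down by the maximal projection $q_1 \in \cZ(Q)$ on which $\al_t \to \id$ is already uniform (Lemma \ref{lem.maxuniform-general}), and uses Proposition \ref{prop.maxuniform} to produce unitaries $w_n \in \cU(B)$ with $\|E_M(\al_t(w_n))\|_2 \to 0$ for every $t>0$; the goal is to contradict this. Then one distinguishes whether for every $b \in \cU(B)$ the conjugates $v_ibv_i^*$ can be captured by sets small relative to $\cS$ (Case 1) or a single $b$ exists for which they escape a definite fraction (Case 2). Case 1 uses Lemma \ref{lemma3}, not Lemma \ref{lemma1}, with the $i$-independent $x := \delta_t(b)$ flanked on both sides by the escaping $v_ir$ and $rv_i^*$, and yields uniform convergence on $r\,\cU(B)\,r$, contradicting the maximality of $q_1$. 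Case 2 reverses the roles: with $b_i := v_ibv_i^*$ escaping, the abelian identity $d = b_i d\,b_i^*$ for arbitrary $d \in \cU(B)$ lets one apply Lemma \ref{lemma1} with $x := \al_t(d)$ fixed and $v := b_i$ varying; this gives only the partial bound $\|\delta_t(d)\|_2^2 \leq (1-\eps)\tau(p)$, but that is enough to contradict $\|E_M(\al_t(w_n))\|_2 \to 0$. Your sketch omits both the contradiction framework supplied by Proposition \ref{prop.maxuniform} and the case distinction; you will need to reintroduce both, and in particular Lemma \ref{lemma3} in the regime where the conjugated unitaries do not escape.
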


To prove this theorem we need two technical lemmas.

\begin{lemma}\label{lemma2}
Assume that $(v_i)_{i \in I}$ and $(w_i)_{i \in I}$ are bounded nets in $M$ such that $\|P_\cF(v_i)\|_2 \recht 0$ and $\|P_\cF(w_i)\|_2 \recht 0$ for all subsets $\cF \subset \Gamma$ that are small relative to $\cS$.
Then, $$\|E_M(x v_i y w_i z)\|_2 \recht 0 \quad\text{for all}\quad y \in \Mtil \ominus M , x,z \in \Mtil \; .$$
\end{lemma}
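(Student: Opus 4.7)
I would first apply Kaplansky density and bilinearity. Since the algebraic span of elements $(d \otimes 1)m$ with $d \in D \ominus \C 1$ and $m \in M$ is $\|\cdot\|_2$-dense in $\Mtil \ominus M$, one can approximate $y$ by such a bounded finite sum (subtracting $E_M$ of the approximant to remain in $\Mtil \ominus M$); similarly write $x \approx (c \otimes 1) m_x$ and $z \approx m_z (c' \otimes 1)$ with $c, c' \in D$ and $m_x, m_z \in M$. The errors are controlled by $\|E_M(x v_i y w_i z)\|_2 \le \|x\|\|v_i\|\|y\|_2\|w_i\|\|z\|$. Absorbing $m_x, m_y, m_z$ into the nets $v_i, w_i$ via Lemma~\ref{lem.almost} (which preserves the $\|P_\cF(\cdot)\|_2 \recht 0$ hypothesis) and invoking multilinearity, the claim reduces to
$$\|E_M\bigl((c \otimes 1)\,v_i\,(d \otimes 1)\,w_i\,(c' \otimes 1)\bigr)\|_2 \recht 0.$$

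\textbf{Integral representation.} Expanding $v_i = \sum_g a_g^i u_g$, $w_i = \sum_h b_h^i u_h$ and using $u_g(d \otimes 1) = (\sigma_g(d) \otimes 1) u_g$ gives
$$E_M\bigl((c \otimes 1)\,v_i\,(d \otimes 1)\,w_i\,(c' \otimes 1)\bigr) = \sum_{g,h} \tau_D\bigl(c\,\sigma_g(d)\,\sigma_{gh}(c')\bigr)\,a_g^i\,\sigma_g(b_h^i)\,u_{gh}.$$
Using $\sigma$-invariance of $\tau_D$ and the Gaussian model $D = L^\infty(Z,\eta)$ with $\sigma_g(f)(z) = f(g^{-1}z)$, the coefficient equals $\int_Z c(gz)\,d(z)\,c'(h^{-1}z)\,d\eta(z)$, and interchanging sum with integral yields the elegant representation
$$E_M(\cdots) = \int_Z d(z)\,V_i(z)\,W_i(z)\,d\eta(z), \qquad V_i(z) := \sum_g c(gz)\,a_g^i u_g,\quad W_i(z) := \sum_h c'(h^{-1}z)\,b_h^i u_h,$$
with $V_i(z), W_i(z) \in M$ for a.e.\ $z$.

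\textbf{Case split and easy cases.} Decompose $c = c_0 + c_1$, $c' = c_0' + c_1'$ with $c_0, c_0' \in \C$ and $c_1, c_1' \in D \ominus \C 1$; by bilinearity it suffices to treat each of the four contributions. Three are easy: if $c \in \C$ then $V_i(z) = c_0 v_i$ is independent of $z$, and the integral collapses to $E_M = c_0\, v_i \cdot \sum_h \tau_D(d\,\sigma_h(c'))\,b_h^i u_h = c_0\, v_i \cdot \sum_h \langle d, \sigma_h(c_1'^*)\rangle_D\,b_h^i u_h$. By relative mixing of $\sigma$ on $D \ominus \C 1$ the coefficient $h \mapsto \langle d, \sigma_h(c_1'^*)\rangle$ is smaller than $\eps$ outside a set $\cF$ small relative to $\cS$; splitting $\|\sum_h \langle d, \sigma_h(c_1'^*)\rangle\,b_h^i u_h\|_2^2 = \sum_h|\cdots|^2\|b_h^i\|_2^2$ over $\cF$ and its complement, and using $\|P_\cF(w_i)\|_2 \recht 0$ together with the uniform bound $\|w_i\|_2 \le C$, gives $\|E_M\|_2 \recht 0$. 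The case $c' \in \C$ is symmetric, and the case $c,c' \in \C$ is trivial since $\tau_D(\sigma_g(d)) = 0$.

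\textbf{Main case and principal obstacle.} The hard case is $c, c' \in D \ominus \C 1$. Applying Cauchy--Schwarz to the integral representation gives $\|E_M\|_2^2 \leq \|d\|_2^2 \int_Z \|V_i(z)\,W_i(z)\|_2^2\,d\eta$. Expanding $\|V_i(z) W_i(z)\|_2^2 = \tau(W_i(z)^* V_i(z)^* V_i(z) W_i(z))$ and interchanging trace with integral produces the four-point Gaussian correlation
$$\int_Z \overline{c(g_1 z)\,c'(h_1^{-1}z)}\,c(g_2 z)\,c'(h_2^{-1}z)\,d\eta(z) = \tau_D\bigl(\sigma_{g_1^{-1}}(c^*)\,\sigma_{g_2^{-1}}(c)\,\sigma_{h_2}(c')\,\sigma_{h_1}(c'^*)\bigr).$$
Decomposing this via $\xi = \sigma_{g_1^{-1}}(c^*)\sigma_{g_2^{-1}}(c) = \tau_D(\xi) + \xi^\perp$ and analogously for the $c'$-product, the constant parts give products of two-point functions $\langle c, \sigma_{g_1g_2^{-1}}(c)\rangle_D$ and $\langle c', \sigma_{h_1^{-1}h_2}(c'^*)\rangle_D$ (each small outside $\cS$-small sets by relative mixing), while the connected $\xi^\perp$-term lies in higher Wiener chaos and is a matrix coefficient of $\pi^{\odot n}$ (which is still relatively mixing, since $\pi$ is). A simultaneous split argument over the four variables $g_1, g_2, h_1, h_2$---exploiting that these two-point mixing factors separate the $(g_1,g_2)$-dependence (controlled via $\|P_\cF(v_i)\|_2 \recht 0$ applied to the $a_g^i$) from the $(h_1,h_2)$-dependence (controlled via $\|P_\cF(w_i)\|_2 \recht 0$ applied to the $b_h^i$)---gives the required convergence. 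The principal obstacle is managing this coupled four-variable dependence; the key point is that the Gaussian structure forces the 4-point correlation to factor (or decompose into Wiener-chaos contributions) through two-point matrix coefficients in which the $v_i$- and $w_i$-indices appear independently.
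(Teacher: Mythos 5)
Your reductions, the integral representation $E_M(\cdots)=\int_Z d(z)\,V_i(z)\,W_i(z)\,d\eta(z)$, and the three easy cases where one of $c,c'$ is scalar are all correct. But the fourth case, which you correctly flag as the principal obstacle, is not actually proved: the final paragraph (``A simultaneous split argument over $g_1,g_2,h_1,h_2$ \dots gives the required convergence'') is a plan, not an argument. The genuine difficulty there is the Wick/cross pairings in the four-point Gaussian correlation. Once you pass to $\|E_M(\cdots)\|_2^2$, the four-point function $\tau_D\bigl(\sigma_{g_1^{-1}}(c^*)\,\sigma_{g_2^{-1}}(c)\,\sigma_{h_2}(c')\,\sigma_{h_1}(c'^*)\bigr)$ decomposes (Wick's theorem) into a sum over pairings, and besides the ``disconnected'' pairing $(g_1,g_2)\times(h_1,h_2)$ — which does separate the $v_i$- and $w_i$-variables as you describe — there are cross pairings producing factors like $\langle \pi(g_1h_2)\,\cdot,\cdot\rangle$ and $\langle\pi(g_2h_1)\,\cdot,\cdot\rangle$. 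These couple a $g$-index to an $h$-index, and relative mixing tells you nothing directly about $g_1h_2$ when you only know the $a^i$'s and $b^i$'s are individually concentrated outside $\cS$-small sets. Your sketch simply does not engage with these cross terms, and it is exactly here that the proposal would stall. The Wiener-chaos remark (that the connected part is a coefficient of $\pi^{\odot n}$, still relatively mixing) is true but not sufficient: you still need the sum over $g_1,g_2,h_1,h_2$ to go to $0$, and the claimed ``separation of $g$- and $h$-dependence'' does not hold for those pairings.

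The paper avoids this entirely by making a smarter choice of approximating elements. Instead of generic $c,c',d\in D$, one uses the Weyl operators $\om(\xi)$, whose linear span is $\|\cdot\|_2$-dense in $D$ and which form a multiplicative group with $\tau(\om(\xi))=\exp(-\|\xi\|^2)$. Reducing to $x=\om(\xi_1)\ot1$, $y=(\om(\xi_2)-\exp(-\|\xi_2\|^2))\ot1$, $z=\om(\xi_3)\ot1$, the relevant three-point correlation $\tau_D\bigl(\om(\xi_1)\,\sigma_g(\om(\xi_2))\,\sigma_{gh}(\om(\xi_3))\bigr) =\exp(-\|\xi_1+\pi(g)\xi_2+\pi(gh)\xi_3\|^2)$ factors \emph{exactly} into a $g$-only factor, an $h$-only factor and a $gh$-factor, because $\om$ is a group homomorphism. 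This gives $E_M(xv_iyw_iz)=\exp(-\|\xi_1\|^2-\|\xi_2\|^2-\|\xi_3\|^2)\,\vphi_{31}\bigl(\vphi_{21}(v_i)\vphi_{32}(w_i)-v_iw_i\bigr)$ with completely bounded Fourier multipliers $\vphi_{\alpha\beta}$, and one never has to square, never encounters a four-point function, and never sees a Wick expansion. Your own observation that the algebraic span of the $\om(\xi)$'s is dense in $D$ is the hint to make this switch: approximating by Weyl operators rather than arbitrary $D$-functions is precisely what collapses your hard case.
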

\begin{proof}
We can approximate $x,y,z$ by linear combinations of $(\om(\xi) \ot 1)a$, $\xi \in H_\R$, $a \in M$. If the nets $v_i,w_i$ satisfy the hypotheses of the lemma, the same is true, using Lemma \ref{lem.almost}, for the nets $a v_i b$ and $a w_i b$, given fixed elements $a,b \in M$. As a result it suffices to prove the lemma when
$$x = \om(\xi_1) \ot 1 \;\; , \quad y = (\om(\xi_2) - \exp(-\|\xi_2\|^2)1) \ot 1 \;\; , \quad z = \om(\xi_3) \ot 1$$
and $\xi_1,\xi_2,\xi_3 \in H_\R$. In that case a direct computation yields
$$E_M(x v_i y w_i z) = \exp(-\|\xi_1\|^2- \|\xi_2\|^2- \|\xi_3\|^2) \; \vphi_{31}( \vphi_{21}(v_i) \vphi_{32}(w_i) - v_i w_i)$$
where for all $\alpha,\beta \in \{1,2,3\}$ we define the completely bounded maps $\vphi_{\alpha\beta} : M \recht M$ given by
$$\vphi_{\alpha\beta}( a u_g) = \exp(-2 \langle \pi(g) \xi_\alpha, \xi_\beta\rangle) a u_g \quad\text{for all}\;\; a \in N, g \in \Gamma \; .$$
One checks that $\vphi_{\alpha\beta}(x) = \exp(\|\xi_\alpha\|^2+\|\xi_\beta\|^2) E_M(\om(\xi_\alpha) x \om(\xi_\beta))$ for all $x \in M$, implying that the $\vphi_{\alpha\beta}$ are indeed well defined completely bounded maps.

By our assumptions on $v_i$ and $w_i$, we have for all $\alpha,\beta\in\{1,2,3\}$ that $\|\vphi_{\alpha\beta}(v_i) - v_i \|_2 \recht 0$. Since $(\vphi_{\alpha\beta}(v_i))_i$ and $(\vphi_{\alpha\beta}(w_i))_i$ are moreover bounded nets, it follows that
$$\|\vphi_{21}(v_i) \vphi_{32}(w_i) - v_i w_i\|_2 \recht 0 \; .$$
But then also $\|E_M(x v_i y w_i z)\|_2 \recht 0$.
\end{proof}

In order to prove \ref{lemma2} we only used that $\pi$ is mixing relative to the family $\cS$ of subgroups of $\Gamma$. We now also use that the $1$-cocycle $b$ is bounded on every $\Sigma \in \cS$.

\begin{lemma}\label{lemma3}
Let $v_i$ and $w_i$ be bounded nets in $M$ such that $\|P_\cF(v_i)\|_2 \recht 0$ and $\|P_\cF(w_i)\|_2 \recht 0$ for all subsets $\cF \subset \Gamma$ that are small relative to $\cS$.

Let $x \in \Mtil \ominus M$, $t > 0$ and $\cF \subset \Gamma$ a subset that is small relative to $\cS$. Then,
$$\langle v_i x w_i , \al_t(P_\cF(\xi)) \rangle \recht 0 \quad\text{uniformly in}\quad \xi \in \rL^2(M), \|\xi\|_2 \leq 1 \; .$$
\end{lemma}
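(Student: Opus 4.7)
The plan is to parallel the proof of Lemma \ref{lemma2} by decomposing $\al_t(P_\cF(\xi))$ into its $\rL^2(M)$- and $\rL^2(\Mtil \ominus M)$-parts, and handling each separately. Writing $P_\cF(\xi) = \sum_{g \in \cF} \xi_g u_g$, I split
\[
\al_t(P_\cF(\xi)) = \vphi_{t^2}(P_\cF(\xi)) + y(\xi), \qquad y(\xi) := \sum_{g \in \cF} \xi_g (\tilde{\om}_g \ot 1) u_g \in \rL^2(\Mtil \ominus M),
\]
where $\tilde{\om}_g := \om(tb(g)) - \exp(-t^2\|b(g)\|^2) \in D \ominus \C 1$. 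By Kaplansky density and linearity I may further reduce to the case where $x = (d \ot 1) z$ with $d \in D \ominus \C 1$ lying in the algebraic span of the Weyl vectors $\om(\eta) - \exp(-\|\eta\|^2)$, $\eta \in H_\R$, and $z \in M$ of finite Fourier support.

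For the $\rL^2(M)$-piece, Cauchy--Schwarz gives
\[
|\langle v_i x w_i, \vphi_{t^2}(P_\cF(\xi))\rangle| = |\langle E_M(v_i x w_i), \vphi_{t^2}(P_\cF(\xi))\rangle| \leq \|E_M(v_i x w_i)\|_2 \|\xi\|_2,
\]
and Lemma \ref{lemma2}, applied with outer elements equal to $1$ and middle element equal to $x$, yields $\|E_M(v_i x w_i)\|_2 \to 0$. This bound is uniform in $\|\xi\|_2 \leq 1$.

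For the $\rL^2(\Mtil \ominus M)$-piece, the key observation is that, since $\cF = \bigcup_{k=1}^n g_k \Sigma_k h_k$ is small relative to $\cS$ and $b$ is bounded on every $\Sigma_k \in \cS$, the cocycle identity yields $C := \sup_{g \in \cF}\|b(g)\|<\infty$; hence the set $\{\tilde{\om}_g : g \in \cF\}$ is bounded in $D \ominus \C 1$. Moreover $y(\xi)$ has $\Gamma$-Fourier support contained in $\cF$ as an element of $\Mtil$: denoting by $\tilde{P}_\cF$ the orthogonal projection of $\rL^2(\Mtil)$ onto the closed linear span of $\{q u_g : q \in D \ovt N,\, g \in \cF\}$, we have $y(\xi) = \tilde{P}_\cF(y(\xi))$, and hence
\[
|\langle v_i x w_i, y(\xi)\rangle| = |\langle \tilde{P}_\cF(v_i x w_i), y(\xi)\rangle| \leq 2 \|\tilde{P}_\cF(v_i x w_i)\|_2 \|\xi\|_2 .
\]
It therefore suffices to prove $\|\tilde{P}_\cF(v_i x w_i)\|_2 \to 0$.

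For this last convergence I will combine the decay hypotheses on $v_i$ and $w_i$ with the relative mixing of the Bogoliubov action $\si$ on $\rL^2(D \ominus \C 1)$, which follows from the relative mixing of $\pi$ via the Fock decomposition. Concretely, expanding the $\Mtil$-Fourier coefficients of $v_i x w_i$ for $x = (d \ot 1) z$ gives
\[
\|\tilde{P}_\cF(v_i x w_i)\|_2^2 = \sum_{g, g'} \tau_D(d^* \si_{g^{-1}g'}(d))\, T_{g,g'}^i,
\]
where $T_{g,g'}^i$ is a finite sum over $k \in \cF$ of $N$-valued inner products involving the $M$-Fourier coefficients of $v_i$ and $\si$-translates of those of $w_i$. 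The main obstacle lies in handling this double sum: the natural approach is to split it according to whether $g^{-1}g'$ belongs to a chosen small set $\cF_\eps$ on which $|\tau_D(d^* \si_h(d))| < \eps$, and then apply Cauchy--Schwarz together with the hypothesis $\|P_{\cF'}(v_i)\|_2, \|P_{\cF'}(w_i)\|_2 \to 0$ for every $\cF'$ small relative to $\cS$ to dominate each resulting piece by a quantity tending to zero.
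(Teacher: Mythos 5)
Your decomposition $\al_t(P_\cF(\xi)) = \vphi_{t^2}(P_\cF(\xi)) + y(\xi)$ is fine, and your treatment of the $\rL^2(M)$-piece via $\|E_M(v_i x w_i)\|_2 \to 0$ from Lemma~\ref{lemma2} is correct. The gap is in the $\rL^2(\Mtil \ominus M)$-piece: the reduction ``it suffices to prove $\|\tilde{P}_\cF(v_i x w_i)\|_2 \to 0$'' does not hold, because that claim is simply false. Take $N = \C$ so that $M = \rL\Gamma$, pick $k_0 \in \cF$, and take a net $(g_i)$ in $\Gamma$ leaving every subset that is small relative to $\cS$. Put $v_i = u_{g_i}$ and $w_i = u_{g_i^{-1}k_0}$; both satisfy the decay hypothesis since $k_0 \cF'^{-1}$ is small relative to $\cS$ whenever $\cF'$ is. With $x = d \ot 1$, $d \in D \ominus \C 1$, one gets $v_i x w_i = (\si_{g_i}(d) \ot 1)u_{k_0}$, which is entirely supported on $\cF$, so $\|\tilde{P}_\cF(v_i x w_i)\|_2 = \|d\|_2$ for every $i$. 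The lemma nevertheless holds for this net: $\langle v_i x w_i, y(\xi)\rangle = \overline{\xi_{k_0}}\,\tau_D(\tilde{\om}_{k_0}^*\,\si_{g_i}(d)) \to 0$ by relative mixing of $\si$, uniformly in $|\xi_{k_0}| \leq 1$. The Cauchy--Schwarz step $|\langle \tilde{P}_\cF(v_i x w_i), y(\xi)\rangle| \leq \|\tilde{P}_\cF(v_i x w_i)\|_2\,\|y(\xi)\|_2$ therefore discards exactly the cancellation that makes the lemma true: it separates $\si_{g_i}(d)$ from the fixed vectors $\tilde{\om}_g$, $g \in \cF$, whereas the convergence lives precisely in the inner products $\tau_D(\tilde{\om}_g^*\si_{g_i}(d))$.

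The same phenomenon shows why your plan for the double sum cannot close either: in the expansion of $\|\tilde{P}_\cF(v_i x w_i)\|_2^2$ the diagonal $g = g'$ contributes $\tau_D(d^*d)\sum_{k \in \cF}\sum_g \|v^i_g \gamma_g(w^i_{g^{-1}k})\|_2^2$, which in the example above is constantly $\|d\|_2^2$. No choice of $\cF_\eps$ removes the diagonal, and Cauchy--Schwarz on the off-diagonal cannot compensate for a strictly positive diagonal.

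What you are missing is the way the paper exploits the boundedness of $b$ on each $\Sigma_k \in \cS$: not merely as a uniform bound $\sup_{g \in \cF}\|b(g)\| < \infty$, but as the statement that $t\,b|_{\Sigma_k}$ is an actual coboundary, $t\,b(\sigma) = \pi(\sigma)\eta_k - \eta_k$. This identity means $\al_t$ acts on $\rL^2(N \rtimes \Sigma_k)$ as conjugation by the fixed unitary $\om(\eta_k) \in D$, i.e.\ $\al_t(\zeta) = \om(\eta_k)^* \zeta\, \om(\eta_k)$ for $\zeta \in \rL^2(N \rtimes \Sigma_k)$. Writing $P_\cF(\xi) = \sum_k u_{g_k}\xi_k u_{h_k}$ with $\xi_k \in \rL^2(N \rtimes \Sigma_k)$, one can then move $\al_t$ to the other side and obtain $|\langle v x w, \al_t(P_\cF(\xi))\rangle| \leq \sum_k \|E_M(V_k\,v\,x\,w\,W_k)\|_2$ with $V_k = \om(\eta_k)\al_t(u_{g_k}^*)$, $W_k = \om(\eta_k)^*\al_t(u_{h_k}^*)$; the right-hand side tends to $0$ by Lemma~\ref{lemma2} and does not depend on $\xi$. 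No $\rL^2(M)$ vs.\ $\rL^2(\Mtil \ominus M)$ split is needed, and Cauchy--Schwarz is applied only once, after the deformation has been absorbed into the fixed unitaries $V_k, W_k$, so that no mixing is lost.
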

\begin{proof}
Fix $x \in \Mtil \ominus M$, $t > 0$ and a subset $\cF \subset \Gamma$ that is small relative to $\cS$. Write $\cF = \bigcup_{k=1}^m g_k \Sigma_k h_k$ with $\Sigma_k \in \cS$. We claim that there exist unitary elements $V_k, W_k \in \Mtil$ such that
$$| \langle v x w , \al_t(P_\cF(\xi)) \rangle | \leq \sum_{k=1}^m \|E_M(V_k v x w W_k)\|_2$$
for all $v,w \in M$ and all $\xi \in \rL^2(M)$ with $\|\xi\|_2 \leq 1$.
Once this claim is proven, the lemma follows from Lemma \ref{lemma2}.

Since the $1$-cocycle $b$ is bounded on $\Sigma_k$, we can take $\eta_k \in H_\R$ such that $t b(g) = \pi(g)\eta_k - \eta_k$ for all $g \in \Sigma_k$. Note that $\al_t(x) = \om(\eta_k)^* x \om(\eta_k)$ for all $x \in \rL^2(N \rtimes \Sigma_k)$. Put $V_k = \om(\eta_k) \al_t(u_{g_k}^*)$ and $W_k = \om(\eta_k)^* \al_t(u_{h_k}^*)$.

Take $\xi \in \rL^2(M), \|\xi\|_2 \leq 1$ arbitrarily. Let $\cF_k \subset \Sigma_k$ be such that $\bigcup_{k=1}^m g_k \Sigma_k h_k = \bigsqcup_{k=1}^m g_k \cF_k h_k$. Define $\xi_k = P_{\cF_k}(u_{g_k}^* \xi u_{h_k}^*)$. Note that $\xi_k \in \rL^2(N \rtimes \Sigma_k)$, that $\|\xi_k\|_2 \leq 1$ and that we have an orthogonal decomposition
$$P_\cF(\xi) = \sum_{k=1}^n u_{g_k} \xi_k u_{h_k} \; .$$
For all $k = 1,\ldots,m$, we have
$$|\langle v x w , \al_t(u_{g_k} \xi_k u_{h_k}) \rangle = |\langle E_M(V_k v x w W_k),\xi_k\rangle| \leq \| E_M(V_k v x w W_k) \|_2 \; .$$
Summing over $k$ yields the claim and hence proves the lemma.
\end{proof}

\subsection*{Proof of Theorem \ref{thm.uniform-abelian}}

Denote by $Q$ the normalizer of $B$ inside $pMp$. Denote by $q_1 \in \cZ(Q)$ the maximal projection given by Lemma \ref{lem.maxuniform-general} such that $\al_t \recht \id$ in \two uniformly on the unit ball of $Bq_1$. If $r \leq q_1$, then also $q \leq q_1$ and we are done. So assume that $r \not\leq q_1$.

Put $T := (p-q_1) r (p-q_1)$ and note that $T$ is nonzero. Let $r_0$ be a nonzero spectral projection of $T$ of the form $r_0 = T S$ for some $S \in M$. Since $v_i r_0 = (p-q_1) \; v_i r \; (p-q_1) S$, it follows from Lemma \ref{lem.almost} that $\|P_\cF(v_i r_0)\|_2 \recht 0$ for every subset $\cF \subset \Gamma$ that is small relative to $\cS$. Replace $p$ by $p-q_1$, $v_i$ by $v_i (p-q_1)$, $r$ by $r_0$ and $B$ by $B(p-q_1)$. We are now in a situation where $B \subset pMp$ is an abelian von Neumann subalgebra normalized by a net of unitaries $(v_i)_{i \in I}$ in $\cU(pMp)$ and where $r \in pMp$ is a nonzero projection such that the following properties hold.
\begin{itemize}
\item If $t \recht 0$ then $\al_t \recht \id$ in \two uniformly on the tail of $(v_i)_{i \in I}$.
\item For every subset $\cF \subset \Gamma$ that is small relative to $\cS$, we have $\lim_i \|P_\cF(v_i r)\|_2 = 0$.
\item By Proposition \ref{prop.maxuniform} there exists a sequence of unitaries $w_n \in \cU(B)$ such that for every $t > 0$ we have that $\|E_M(\al_t(w_n))\|_2 \recht 0$ as $n \recht \infty$.
\end{itemize}
We shall derive a contradiction from this list of three properties. We separately consider two cases.

{\bf Case 1.} For every $\eps > 0$ and every $b \in \cU(B)$ there exists a subset $\cF \subset \Gamma$ that is small relative to $\cS$ such that
$$
\liminf_i \|(1-P_\cF)(v_i b v_i^*)\|_2 < \eps \; .
$$

{\bf Case 2.} There exists a $\delta > 0$ and a unitary $b \in \cU(B)$ such that for every subset $\cF \subset \Gamma$ that is small relative to $\cS$, we have
\begin{equation}\label{eq.case2}
\limsup_i \|P_\cF(v_i b v_i^*)\|_2 \leq (1-\delta) \|p\|_2 \; .
\end{equation}

First assume that we are in case 1. Denote $\delta_t(b) = \al_t(b) - E_M(\al_t(b))$. We claim that $\delta_t \recht 0$ in \two uniformly on $r \cU(B) r$. To prove this statement, choose $\eps > 0$. Put $\delta = \eps^2/(9 \tau(p))$ and take $t > 0$ small enough and $i_0 \in I$ such that
$$\|r - \al_t(r)\|_2 \leq \delta \|p\|_2 \quad\text{and}\quad \|v_i - \al_t(v_i)\|_2 \leq \delta \|p\|_2 \quad\text{for all}\;\; i \geq i_0 \; .$$
We show that $\|\delta_t(rbr)\|_2 \leq \eps$ for all $b \in \cU(B)$, hence proving the claim above.

Note that our choice of $t$ implies that for all $b \in \cU(B)$
\begin{align*}
& \|r \al_t(b) r - \al_t(rbr)\|_2 \leq 2 \delta \|p\|_2 \quad\text{so that}\quad \|r \delta_t(b) r - \delta_t(rbr)\|_2 \leq 4 \delta \|p\|_2 \quad\text{and}\\
& \|v_i \al_t(b) v_i^* - \al_t(v_i b v_i^*)\|_2 \leq 2 \delta\|p\|_2 \quad\text{for all}\;\; i \geq I_0 \; .
\end{align*}
Fix $b \in \cU(B)$. Take a subset $\cF \subset \Gamma$ that is small relative to $\cS$ and such that $$\liminf_i \|(1-P_\cF)(v_i b v_i^*)\|_2 < \delta \|p\|_2 \; .$$
It follows that for all $i \geq i_0$,
\begin{align*}
\|\delta_t(rbr)\|_2^2 & = \langle \delta_t(rbr),\al_t(rbr)\rangle \leq | \langle r \delta_t(b) r , r \al_t(b) r \rangle | + 6 \delta \tau(p) \\
& = | \langle r \delta_t(b) r , \al_t(b) \rangle | + 6 \delta \tau(p) = | \langle v_i r \delta_t(b) r v_i^* , v_i \al_t(b) v_i^* \rangle | + 6 \delta \tau(p) \\
& \leq | \langle v_i r \delta_t(b) r v_i^* , \al_t(v_i b v_i^*) \rangle | + 8 \delta \tau(p) \\
& \leq | \langle v_i r \delta_t(b) r v_i^* , \al_t(P_\cF(v_i b v_i^*)) \rangle | + \|p\|_2 \; \|(1-P_\cF)(v_i b v_i^*)\|_2 + 8 \delta \tau(p) \; .
\end{align*}
Taking the $\liminf$ and using Lemma \ref{lemma3} it follows that $\|\delta_t(rbr)\|_2^2 \leq 9 \delta \tau(p) = \eps^2$, hence proving the claim above.

Denote by $q$ the smallest projection in $\cZ(Q)$ that satisfies $r \leq q$. We conclude from the claim above and from Proposition \ref{prop.extenduniform} that $\al_t \recht \id$ uniformly on the unit ball of $B q$. This is a contradiction with the existence of the sequence $w_n \in \cU(B)$ such that $\|E_M(\al_t(w_n))\|_2 \recht 0$ for every $t > 0$.

Next assume that we are in case 2. Take $\delta > 0$ and $b \in \cU(B)$ such that \eqref{eq.case2} holds. Write $\eps = \delta / 5$ and put $b_i := v_i b v_i^*$. Note that $\al_t \recht \id$ in \two uniformly on the tail of $(b_i)_{i \in I}$. Take $t > 0$ and $i_0 \in I$ such that $\|b_i - \al_t(b_i)\|_2 \leq \eps \|p\|_2$ for all $i \geq i_0$. We claim that $\|\delta_t(d)\|_2^2 \leq (1-\eps)\tau(p)$ for all $d \in \cU(B)$. To prove this claim fix $d \in \cU(B)$. Take a finite-dimensional subspace $K \subset D \ominus \C 1$ such that, using the notation of Lemma \ref{lemma1}, we have $\|(1 - Q_K)\delta_t(d)\|_2 \leq \eps \|p\|_2$. By Lemma \ref{lemma1} take a subset $\cF \subset \Gamma$ that is small relative to $\cS$ and such that $\|Q_K(b \al_t(d))\|_2 \leq \|P_\cF(b)\|_2 + \eps \|p\|_2$ for all $b$ in the unit ball of $M$.

By our choice of $t$ we know that $\|b_i \al_t(d) b_i^* - \al_t(b_i d b_i^*)\|_2 \leq 2 \eps \|p\|_2$ for all $i \geq i_0$. Since $B$ is abelian, also $d = b_i d b_i^*$.
Using that $Q_K$ is right $M$-modular it follows that for all $i \geq i_0$ we have
\begin{align*}
\|\delta_t(d)\|_2^2 &= \langle \al_t(d) , \delta_t(d) \rangle = \langle \al_t(b_i d b_i^*),\delta_t(d)\rangle
\leq |\langle b_i \al_t(d) b_i^*, \delta_t(d) \rangle | + 2 \eps \tau(p) \\
& \leq |\langle b_i \al_t(d) b_i^*, Q_K(\delta_t(d)) \rangle | + 3 \eps \tau(p) = |\langle Q_K(b_i \al_t(d) b_i^*), \delta_t(d) \rangle | + 3 \eps \tau(p) \\
& = |\langle Q_K(b_i \al_t(d)) \; b_i^*, \delta_t(d) \rangle | + 3 \eps \tau(p) \leq \|Q_K(b_i \al_t(d))\|_2 \; \|p\|_2 + 3 \eps \tau(p) \\
& \leq \|P_\cF(b_i)\|_2 \; \|p\|_2 + 4 \eps \tau(p) \; .
\end{align*}
Taking the $\limsup$ it follows that $\|\delta_t(d)\|_2^2 \leq (1-\delta+4\eps) \tau(p) = (1-\eps) \tau(p)$, hence proving the claim.

From this claim, it follows that $\|E_M(\al_t(d))\|_2^2 \geq \eps \tau(p)$ for all $d \in \cU(B)$. This is a contradiction with the existence of the sequence $(w_n)$ in $\cU(B)$ such that $\|E_M(\al_t(w_n))\|_2 \recht 0$. This ends the proof of case 2 and also ends the proof of Theorem \ref{thm.uniform-abelian}.\hfill\qedsymbol

\section{Transfer of rigidity}

We fix a trace preserving action $\Gamma \actson (N,\tau)$ and put $M = N \rtimes \Gamma$. Let $\cS$ be a family of subgroups of $\Gamma$. As above we call a subset $\cF \subset \Gamma$ small relative to $\cS$ if $\cF$ can be written as a finite union of subsets of the form $g \Sigma h$, $g, h \in \Gamma$, $\Sigma \in \cS$. Whenever $\cF \subset \Gamma$ we denote by $P_\cF$ the orthogonal projection of $\rL^2(M)$ onto the closed linear span of $\{a u_g \mid a \in N, g \in \cF\}$.

Let $f:\Gamma \recht \R$ be a conditionally negative type function with $f(e)=0$. Define the semigroup $(\vphi_t)_{t > 0}$ of unital trace preserving completely positive maps
$$\vphi_t : M \recht M : \vphi_t(a u_g) = \exp(-t f(g)) a u_g \quad\text{for all}\;\; a \in N , g \in \Gamma \; .$$

\begin{proposition}\label{prop.transfer}
Let $p \in M$ be a projection and assume that $pMp = B \rtimes \Lambda$ is any crossed product decomposition with corresponding canonical unitaries $(v_s)_{s \in \Lambda}$. Let $\Delta : pMp \recht pMp \ovt \rL\Lambda$ be the comultiplication given by $\Delta(b v_s) = b v_s \ot v_s$ for all $b \in B$, $s \in \Lambda$. Assume that $(w_i)_{i \in I}$ is a net of unitaries in $\cU(pMp)$ and that $q \in (\rL \Lambda)' \cap pMp$ is a projection satisfying
\begin{itemize}
\item if $t \recht 0$ then $\id \ot \vphi_t \recht \id$ in \two uniformly on the tail of $(\Delta(w_i))_{i \in I}$,
\item for every subset $\cF \subset \Gamma$ that is small relative to $\cS$, we have $\lim_i \|(1 \ot P_\cF)(\Delta(w_i)(1 \ot q))\|_2 = 0$.
\end{itemize}
Then there exists a net of elements $(s_j)_{j \in J}$ in $\Lambda$ such that, writing $v_j := v_{s_j}$, the following holds.
\begin{itemize}
\item If $t \recht 0$ then $\vphi_t \recht \id$ in \two uniformly on the tail of $(v_j)_{j \in J}$.
\item For every subset $\cF \subset \Gamma$ that is small relative to $\cS$, we have $\lim_j \|P_\cF(v_j q)\|_2 = 0$.
\end{itemize}
\end{proposition}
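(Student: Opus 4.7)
The approach is to expand each $w_i$ in its Fourier series over $pMp = B \rtimes \Lambda$, reduce both hypotheses to weighted sum estimates over $\Lambda$ via an orthogonality computation in $\rL^2(pMp \ovt M)$, and then extract the net $(s_j)$ by a pigeonhole argument in a suitable product directed set.

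Write $a_{i,s} := E_B(w_i v_s^*) \in B$, so that $w_i = \sum_{s \in \Lambda} a_{i,s} v_s$ and $\Delta(w_i) = \sum_s a_{i,s} v_s \ot v_s$. The family $(a_{i,s} v_s)_{s \in \Lambda}$ is orthogonal in $\rL^2(pMp)$ with $\|a_{i,s} v_s\|_2 = \|a_{i,s}\|_2$ and $\sum_s \|a_{i,s}\|_2^2 = \|w_i\|_2^2 = \tau(p)$. Using this orthogonality in the first tensor factor, together with the fact that $q$ commutes with every $v_s$, one obtains the two identities
\[
\|\Delta(w_i) - (\id \ot \vphi_t)(\Delta(w_i))\|_2^2 \;=\; \sum_{s \in \Lambda} \|a_{i,s}\|_2^2 \, \|v_s - \vphi_t(v_s)\|_2^2
\]
and
\[
\|(1 \ot P_\cF)(\Delta(w_i)(1 \ot q))\|_2^2 \;=\; \sum_{s \in \Lambda} \|a_{i,s}\|_2^2 \, \|P_\cF(v_s q)\|_2^2 \; .
\]
Since $f \geq 0$ by Schoenberg's theorem, the function $t \mapsto \|v_s - \vphi_t(v_s)\|_2$ is nondecreasing on $[0,\infty)$, so the first hypothesis is equivalent to the existence, for every $\eps > 0$, of $t_0(\eps) > 0$ and $i_0(\eps) \in I$ such that the first right-hand side (evaluated at $t = t_0(\eps)$) is less than $\eps^4$ for all $i \geq i_0(\eps)$. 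Markov's inequality then bounds the $\|a_{i,s}\|_2^2$-mass of the set $\{s \in \Lambda : \|v_s - \vphi_{t_0(\eps)}(v_s)\|_2 \geq \eps\}$ by $\eps^2$. Analogously, the second hypothesis ensures that for every $\cF \subset \Gamma$ small relative to $\cS$ and every $\eps > 0$, the $\|a_{i,s}\|_2^2$-mass of $\{s : \|P_\cF(v_s q)\|_2 \geq \eps\}$ is at most $\eps^2$ for $i$ sufficiently large.

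Fix a decreasing sequence $\eps_n \downarrow 0$ with $\sum_{n \geq 1} \eps_n^2 + \sup_n \eps_n^2 < \tau(p)$, and put $t_n := t_0(\eps_n)$. Direct an index set $J$ by triples $j = (n, \cF, i_\star)$ with $n \in \N$, $\cF \subset \Gamma$ small relative to $\cS$, and $i_\star \in I$, under the product order. For each such $j$, pick $i \geq i_\star$ large enough that the Markov bounds for $(\eps_k, t_k)$, $k = 1,\ldots,n$, and for $(\cF,\eps_n)$ all hold simultaneously. The union of the corresponding bad sets then carries $\|a_{i,s}\|_2^2$-mass at most $\sum_{k=1}^n \eps_k^2 + \eps_n^2 < \tau(p) = \sum_s \|a_{i,s}\|_2^2$, so its complement in $\Lambda$ is non-empty and any $s_j$ in it satisfies both $\|v_j - \vphi_{t_k}(v_j)\|_2 < \eps_k$ for all $k \leq n$ and $\|P_\cF(v_j q)\|_2 < \eps_n$, where $v_j := v_{s_j}$. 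Given $\eps > 0$, pick $n_\star$ with $\eps_{n_\star} < \eps$; then on the tail $\{j = (n, \cF, i_\star) : n \geq n_\star\}$ we have $\|v_j - \vphi_{t_{n_\star}}(v_j)\|_2 < \eps_{n_\star} < \eps$, yielding the first conclusion with $t_0 := t_{n_\star}$. For any fixed $\cF_0$ small relative to $\cS$, on the tail $\{j \geq (1, \cF_0, i_\star^0)\}$ (any $i_\star^0 \in I$) we have $\cF_0 \subset \cF$, hence $\|P_{\cF_0}(v_j q)\|_2 \leq \|P_\cF(v_j q)\|_2 < \eps_n \to 0$.

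The essential content is the pair of Fourier-orthogonality identities above; once these are in place, the rest is direct pigeonholing. The only subtlety is that the first conclusion requires the threshold $t_0$ to depend on the target tolerance $\eps$, which is why the directed set must range over the whole sequence $(\eps_n)$; this is the role of the integer coordinate $n$ in the triples. I do not anticipate any serious obstacle beyond this bookkeeping.
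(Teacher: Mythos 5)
Your proposal is correct and follows essentially the same strategy as the paper: expand $w_i$ in its Fourier series over $\Lambda$, use the orthogonality of the terms $a_{i,s} v_s$ to reduce both hypotheses to weighted $\ell^2$-sums over $\Lambda$ (the paper states the first of these in the equivalent form $1 - \Re(\tau\ot\tau)\bigl(\Delta(w_i)^*(\id\ot\vphi_t)\Delta(w_i)\bigr) = \sum_s \|w^i_s\|_2^2\,(1-\Re\tau(v_s^*\vphi_t(v_s)))$, related to yours via $\|v_s-\vphi_t(v_s)\|_2^2 \leq 2(1-\Re\tau(v_s^*\vphi_t(v_s)))$), invoke monotonicity of $t \mapsto \|v_s - \vphi_t(v_s)\|_2$, and extract the net by a pigeonhole over the mass of bad sets indexed by triples. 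The paper fixes the intersection sets $\cW_n := \cV_1\cap\cdots\cap\cV_n$ from the first hypothesis and then argues by contradiction that the second bad set cannot swallow $\cW_n$, whereas you bound both bad sets simultaneously via Markov and take a union — a cosmetic difference only.
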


\begin{proof}
Normalize the trace on $M$ such that $\tau(p) = 1$. As such $\Delta$ is trace preserving. Take a decreasing sequence $t_1 > t_2 > \cdots$ of strictly positive numbers and an increasing sequence $i_1 \leq i_2 \leq \cdots$ such that
$$1 - \Re (\tau \ot \tau)\bigl(\Delta(w_i)^*(\id \ot \vphi_{t_n})\Delta(w_i)\bigr) \leq 4^{-n-1} \quad\text{for all} \;\; i \geq i_n \; .$$
Define
$$\cV_n := \{s \in \Lambda \mid 1 - \Re \tau(v_s^* \vphi_{t_n}(v_s)) \leq 2^{-n-1} \} \; .$$
Fix $n \in \N$ and $i \geq i_n$. Write $w_i = \sum_{s \in \Lambda} w^i_s v_s$ with $w^i_s \in B$. It follows that
\begin{align*}
4^{-n-1} & \geq 1 - \Re (\tau \ot \tau)\bigl(\Delta(w_i)^*(\id \ot \vphi_{t_n})\Delta(w_i)\bigr) \\
& = \sum_{s \in \Lambda} \bigl( 1 - \Re \tau(v_s^* \vphi_{t_n}(v_s)) \bigr) \; \|w^i_s\|_2^2 \\
& \geq \sum_{s \in \Lambda - \cV_n} 2^{-n-1} \|w^i_s\|_2^2 \; .
\end{align*}
We conclude that for all $n \in \N$ and all $i \geq i_n$,
$$\sum_{s \in \Lambda - \cV_n} \|w^i_s\|_2^2 \leq 2^{-n-1} \; .$$
Define $\cW_n := \cV_1 \cap \cdots \cap \cV_n$. It follows that for all $i \geq i_n$
$$\sum_{s \in \Lambda-\cW_n} \|w^i_s\|_2^2 \leq \frac{1}{2} \quad\text{and hence}\quad   \sum_{s \in \cW_n} \|w^i_s\|_2^2 \geq \frac{1}{2} \; .$$
We claim that for every $\eps > 0$, $n \in \N$ and subset $\cF \subset \Gamma$ that is small relative to $\cS$, there exists an $s \in \cW_n$ satisfying $\|P_\cF(v_s q)\|_2 < \eps$. Indeed, if for a given $\eps > 0$, $n \in \N$ and $\cF \subset \Gamma$ that is small relative to $\cS$, the claim fails, it would follow that for all $i \geq i_n$,
\begin{align*}
\|(\id \ot P_\cF)(\Delta(w_i)(1 \ot q))\|_2^2 &= \sum_{s \in \Lambda} \|w^i_s\|_2^2 \; \|P_\cF(v_s q)\|_2^2 \\
&\geq \sum_{s \in \cW_n} \|w^i_s\|_2^2 \; \|P_\cF(v_s q)\|_2^2 \geq \sum_{s \in \cW_n} \|w^i_s\|_2^2 \; \eps^2 \geq \frac{\eps^2}{2} \; .
\end{align*}
Since $\lim_i \|(\id \ot P_\cF)(\Delta(w_i)(1 \ot q))\|_2 = 0$ this is absurd and the claim is proven.

For every $\eps > 0$, $n \in \N$ and $\cF \subset \Gamma$ small relative to $\cS$, pick an element $s_{\eps,n,\cF} \in \cW_n$ such that $\|P_\cF(v_{s_{\eps,n,\cF}} q)\|_2 < \eps$. We obtain a net $(s_j)_{j \in J}$ in $\Lambda$ that satisfies all the conclusions of the proposition. Indeed, first observe that for every fixed $s \in \Lambda$ the expression $\tau(v_s^* \vphi_t(v_s))$ increases when $t > 0$ decreases. Writing $v_{\eps,n,\cF} = v_{s_{\eps,n,\cF}}$ and using the inequality $\|\vphi_t(v_s) - v_s\|_2^2 \leq 2(1-\Re \tau(v_s^* \vphi_t(v_s)))$, it follows that
\begin{align*}
\|\vphi_t(v_{\eps,n,\cF}) - v_{\eps,n,\cF}\|_2 \leq 2^{-n_0/2} &\quad\text{whenever}\;\; 0 < t \leq t_{n_0} \;\;\text{and}\;\; n \geq n_0 \; ,\\
\|P_{\cF_0}(v_{\eps,n,\cF} \, q)\|_2 < \eps &\quad\text{whenever}\;\; \cF_0 \subset \cF \; .
\end{align*}
\end{proof}

\section{Proof of Theorems \ref{thm.main} and \ref{thm.uniquecartan}}

We are given a crossed product II$_1$ factor $M = N \rtimes \Gamma$. Fix a projection $p \in M$ and assume that $pMp = B \rtimes \Lambda$ is another crossed product decomposition with $B$ being diffuse and of type I. Denote by $(v_s)_{s \in \Lambda}$ the canonical unitaries in $B \rtimes \Lambda$. Since $M$ is a factor, the action $\Lambda \actson \cZ(B)$ is ergodic and hence $B \cong \M_m(\C) \ot \cZ(B)$ for some integer $m$.

Denote by $\Delta : pMp \recht pMp \ovt \rL \Lambda$ the comultiplication given by $\Delta(b v_s) = b v_s \ot v_s$ for all $b \in B$, $s \in \Lambda$.

Since $M$ is a II$_1$ factor and $B \subset pMp$ is diffuse, we can take partial isometries $V_1,\ldots,V_k \in M$ such that $V_1 = p$,  $V_i^* V_i \in B$ for all $i=1,\ldots,k$ and $\sum_i V_i V_i^* = 1$. We extend $\Delta$ to a unital $*$-homomorphism $M \recht M \ovt \rL \Lambda$ by the formula
$$\Delta(x) := \sum_{i,j=1}^k (V_i \ot 1) \Delta(V_i^* x V_j) (V_j^* \ot 1) \; .$$
Since $V_1 = p$, the restriction of the new $\Delta$ to $pMp$ equals the original comultiplication.

The following meta-theorem brings together all that we have done in the previous sections. Our main Theorem \ref{thm.main} will be a direct consequence.

\begin{theorem}\label{thm.meta}
Within the setup described before the theorem, let $b$ be an unbounded $1$-cocycle into the orthogonal representation $\pi : \Gamma \recht \cO(H_\R)$ that is mixing relative to a family $\cS$ of subgroups of $\Gamma$ such that $b$ is bounded on every $\Sigma \in \cS$. Define $\Mtil$ and $(\al_t)_{t \in \R}$ as in paragraph \ref{subsec.deform}. Assume that $Q \subset M$ is a diffuse von Neumann subalgebra and that $q \in (\rL \Lambda)' \cap pMp$ is a nonzero projection such that the following two conditions hold.
\begin{enumerate}
\item $\id \ot \al_t \recht \id$ in \two uniformly on the unit ball of $\Delta(Q)$.
\item For every $\Sigma \in \cS$ we have $\Delta(Q)(1 \ot q) \not\prec M \ovt (N \rtimes \Sigma)$.
\end{enumerate}
Then there exists $\Sigma \in \cS$ such that $B \prec N \rtimes \Sigma$ and hence $(B)_1 \almost N \rtimes \cS$.
\end{theorem}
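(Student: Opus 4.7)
The plan is to combine the transfer of rigidity from Proposition \ref{prop.transfer} with the two uniform convergence results of the previous section, culminating in a contradiction with the unboundedness of the cocycle $b$. First, hypothesis (2) combined with Lemma \ref{lem.intertwining}, applied inside the crossed product $M \ovt M \cong (M \ovt N) \rtimes \Gamma$ with $\Gamma$ acting on the second factor (so that $M \ovt (N \rtimes \Sigma)$ becomes $(M \ovt N) \rtimes \Sigma$), supplies a net $w_i \in \cU(Q)$ such that
\[
\|(1 \ot P_\cF)(\Delta(w_i)(1 \ot q))\|_2 \recht 0
\]
for every subset $\cF \subset \Gamma$ small relative to $\cS$. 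On the other hand, since $\vphi_{t^2} = E_M \circ \al_t$ and $E_M$ is a contraction in $\|\,\cdot\,\|_2$, hypothesis (1) upgrades to $\id \ot \vphi_t \recht \id$ in $\|\,\cdot\,\|_2$ uniformly on $(\Delta(Q))_1$, hence on the tail of $(\Delta(w_i))_i$.

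Feeding these two inputs into Proposition \ref{prop.transfer}, I obtain a net $s_j \in \Lambda$ such that $v_j := v_{s_j}$ satisfies $\vphi_t \recht \id$ uniformly on its tail (equivalently, by Lemma \ref{lem.equivalent}, $\al_t \recht \id$ uniformly on its tail), and $\|P_\cF(v_j q)\|_2 \recht 0$ for every $\cF$ small relative to $\cS$. The next move is to apply Theorem \ref{thm.uniform-abelian} to the abelian subalgebra $A := \cZ(B) \subset pMp$, which is normalized by each $v_j$ because conjugation by $v_s$ preserves $B$ and hence its center. The normalizer of $A$ in $pMp$ contains $\cU(B)$ and $\cU(\cL\Lambda)$, so its bicommutant is all of $pMp$; since $pMp$ is a factor, the center of this normalizer equals $\C p$, so the smallest central projection dominating the nonzero $q$ is $p$ itself. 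Theorem \ref{thm.uniform-abelian} therefore yields $\al_t \recht \id$ uniformly on the whole of $(A)_1$.

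Now I argue by contradiction. Suppose $A \not\prec N \rtimes \Sigma$ for every $\Sigma \in \cS$. Then Theorem \ref{thm.uniform-normalizer}, applied to $A$ with $r := p \in A' \cap pMp$, gives $\al_t \recht \id$ uniformly on the unit ball of the normalizer of $A$, that is, on $(pMp)_1$. A standard amplification---picking finitely many partial isometries $V_1 = p, V_2, \ldots, V_n \in M$ with orthogonal range projections $V_i V_i^*$ summing to $1$ and source projections $V_i^* V_i \leq p$, and expanding any $x \in (M)_1$ as $\sum_{i,j} V_i (V_i^* x V_j) V_j^*$ with $V_i^* x V_j \in (pMp)_1$---then transfers this to uniform convergence $\al_t \recht \id$ on $(M)_1$. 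Evaluating on $u_g$ gives $\sup_g (1 - \exp(-t \|b(g)\|^2)) \recht 0$ as $t \recht 0$, forcing $b$ to be bounded, a contradiction. Hence $\cZ(B) \prec N \rtimes \Sigma$ for some $\Sigma \in \cS$.

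Finally I upgrade to the full conclusion. Since $B$ is of type I, writing $B = \M_m(\C) \ot \cZ(B)$ with matrix units in $\cZ(B)' \cap pMp$, the right-$(N \rtimes \Sigma)$-finite $\cZ(B)$-subbimodule of $\rL^2(pMp)$ coming from the intertwining becomes, after acting on the left by the matrix units, a right-$(N \rtimes \Sigma)$-finite $B$-subbimodule, so $B \prec N \rtimes \Sigma$. To extract the stronger statement $(B)_1 \almost N \rtimes \cS$ I invoke Proposition \ref{prop.maxintertwine}: the normalizer of $B$ in $pMp$ generates all of $pMp$ (it contains both $B$ and $\Lambda$), so its center is $\C p$, and the maximal projection for approximate containment lies in $\{0,p\}$; it is nonzero since $B \prec N \rtimes \Sigma$ produces an element of the relevant set via Lemma \ref{lem.approx-vs-prec}, hence equal to $p$. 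The most delicate point is the very first step, where one must interpret the intertwining condition $\Delta(Q)(1 \ot q) \not\prec M \ovt (N \rtimes \Sigma)$ inside $M \ovt M$ correctly via the extended $\Delta$ on $M$; the remaining steps are essentially bookkeeping.
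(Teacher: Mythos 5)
Your proof is correct and follows essentially the same chain as the paper's: Lemma \ref{lem.intertwining} applied to hypothesis~(2) produces the net $(w_i)$, Proposition \ref{prop.transfer} transfers the rigidity to a net $(v_{s_j})$ in $\Lambda$, Theorem \ref{thm.uniform-abelian} (with the observation that the normalizer of $\cZ(B)$ generates the factor $pMp$) gives uniform convergence on $(\cZ(B))_1$, Theorem \ref{thm.uniform-normalizer} together with the unboundedness of $b$ yields the contradiction, and Proposition \ref{prop.maxintertwine} upgrades $B \prec N\rtimes\Sigma$ to $(B)_1 \almost N \rtimes \cS$. The only differences are cosmetic: you derive $\cZ(B)\prec N\rtimes\Sigma$ first and then pass to $B\prec N\rtimes\Sigma$ via the matrix units, whereas the paper extends the uniform convergence from $\cZ(B)$ to $B = \M_m(\C)\ot\cZ(B)$ before invoking Theorem \ref{thm.uniform-normalizer}, and you make explicit the amplification from $(pMp)_1$ to $(M)_1$ that the paper leaves implicit.
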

\begin{proof}
Since $Q$ is diffuse, we may, after a unitary conjugacy of $Q$, assume that $p \in Q$. It follows that $\Delta(pQp)(1 \ot q) \not\prec pMp \ovt (N \rtimes \Sigma)$ for all $\Sigma \in \cS$. Lemma \ref{lem.intertwining} provides a net of unitaries $(w_i)$ in $\cU(pQp)$ such that $\|(1 \ot P_\cF)(\Delta(w_i)(1 \ot q))\|_2 \recht 0$ for every subset $\cF \subset \Gamma$ that is small relative to $\cS$. Since $\id \ot \al_t \recht \id$ in \two uniformly on the unit ball of $\Delta(Q)$, certainly $\id \ot \al_t \recht \id$ in \two uniformly on the unitaries $\Delta(w_i) \in \cU(pMp \ovt pMp)$. By the transfer of rigidity proposition \ref{prop.transfer} we find a net of elements $(s_j)$ in $\Lambda$ such that, writing $v_j := v_{s_j}$, we have that $\al_t \recht \id$ in \two uniformly on the tail of $(v_j)$ and $\|P_\cF(v_j q)\|_2 \recht 0$ for every subset $\cF \subset \Gamma$ that is small relative to $\cS$.

Since $\cZ(B)$ is an abelian von Neumann subalgebra of $pMp$ that is normalized by the unitaries $v_j$ and that is moreover regular in $pMp$, it follows from Theorem \ref{thm.uniform-abelian} and Lemma \ref{lem.maxuniform-general} that $\al_t \recht \id$ in \two uniformly on the unit ball of $\cZ(B)$, and hence as well on the unit ball of $B = \M_m(\C) \ot \cZ(B)$. If for every $\Sigma \in \cS$ we would have that $B \not\prec N \rtimes \Sigma$, Theorem \ref{thm.uniform-normalizer} would imply that $\al_t \recht \id$ in \two uniformly on the unit ball of $pMp$. This would be a contradiction with $b$ being unbounded.

So there exists a $\Sigma \in \cS$ such that $B \prec N \rtimes \Sigma$. Since $B \subset pMp$ is regular Proposition \ref{prop.maxintertwine} implies that $(B)_1 \almost N\rtimes \cS$.
\end{proof}

In order to establish the condition $\Delta(Q)(1 \ot q) \not\prec M \ovt (N \rtimes \Sigma)$ appearing in Theorem \ref{thm.meta}, we prove the following lemma. It is contained in \cite[Lemma 9.2]{IPV10} and \cite[Lemma 4]{HPV10} but we include a proof for the convenience of the reader.

We say that a finite von Neumann algebra $P$ is anti-(T) if there exists a chain of von Neumann subalgebras $\C1 = P_0 \subset P_1 \subset \cdots \subset P_n = P$ such that for every $i = 1,\ldots,n$ the finite von Neumann algebra $P_i$ has property (H) relative to $P_{i-1}$ in the sense of \cite[Section 2]{Po01}. Examples include crossed products $A \rtimes \Sigma$ where $A$ is amenable and $\Sigma$ admits a chain of subgroups $\{e\} = \Sigma_0 < \Sigma_1 < \cdots < \Sigma_n = \Sigma$ such that for all $i \in \{1,\ldots,n\}$ the subgroup $\Sigma_{i-1} \lhd \Sigma_i$ is normal and the quotient group $\Sigma_i / \Sigma_{i-1}$ has the Haagerup property.

\begin{lemma}\label{lem.no-intertwine}
If the bimodule $\bim{pMp}{\cH}{pMp}$ is weakly contained in the coarse $pMp$-$pMp$-bimodule then the bimodule $\bim{\Delta(M)}{(\rL^2(M) \ot \cH)}{\Delta(M)}$ is weakly contained in the coarse $M$-$M$-bimodule.

Let $Q,P \subset M$ be von Neumann subalgebras.
\begin{itemize}
\item If $Q$ has no amenable direct summand and $P$ is amenable, then $\Delta(Q) \not\prec M \ovt P$.
\item If $Q$ is diffuse with property (T) and $P$ is anti-(T), then $\Delta(Q) \not\prec M \ovt P$.
\end{itemize}
\end{lemma}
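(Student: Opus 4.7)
My plan is to prove the three parts together, with Part 1 serving as the main technical input and Parts 2 and 3 as parallel applications via Jones basic constructions.

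For Part 1, I would first analyze the $\Delta(pMp)$-$\Delta(pMp)$-bimodule structure on $\rL^2(pMp \ovt \rL\Lambda)$ explicitly. Using the explicit formula $\Delta(bv_s) = bv_s \ot v_s$ and decomposing $\rL^2(pMp \ovt \rL\Lambda) = \rL^2(pMp) \ot \ell^2(\Lambda)$, a direct computation shows that after a twist by the left-regular representation of $\Lambda$ on the $\ell^2(\Lambda)$-factor, the $\Delta(pMp)$-bimodule becomes unitarily equivalent to a multiple of the standard $pMp$-$pMp$-bimodule $\rL^2(pMp)$. Consequently, tensoring with a $pMp$-$pMp$-bimodule $\cH$ weakly contained in the coarse $pMp$-bimodule produces a $\Delta(pMp)$-bimodule weakly contained in a multiple of the coarse $pMp$-bimodule, which sits inside the coarse $M$-bimodule after induction up to $M$. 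To pass from $\Delta(pMp)$ to $\Delta(M)$, I would use the partial isometries $V_i$ (with $V_1 = p$, $V_i^* V_i \in B$, $\sum_i V_iV_i^* = 1$) to write $\rL^2(M) \ot \cH$ as a finite direct sum indexed by $(i,j)$ of translates $(V_i \ot 1) \cdot (\rL^2(pMp) \ot \cH) \cdot (V_j^* \ot 1)$, on which the extended comultiplication formula for $\Delta$ acts in a way compatible with the coarse $M$-$M$-structure. Continuity of weak containment under tensoring with a fixed bimodule and under limits then yields the general statement.

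For Part 2, I would argue by contradiction. Suppose $\Delta(Q) \prec M \ovt P$. Since $P$ is amenable, the trivial $P$-$P$-bimodule $\rL^2(P)$ is weakly contained in the coarse, and hence the $M$-$M$-bimodule $\rL^2(M) \ot_P \rL^2(M)$ is weakly contained in the coarse $M$-$M$-bimodule; its restriction $\cH := p(\rL^2(M) \ot_P \rL^2(M))p$ is a $pMp$-$pMp$-bimodule weakly contained in the coarse $pMp$-bimodule. Part 1 then gives that the $\Delta(M)$-$\Delta(M)$-bimodule $\rL^2(M) \ot \cH$ is weakly contained in the coarse $M$-$M$-bimodule. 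On the other hand, the intertwining $\Delta(Q) \prec M \ovt P$, via the Jones basic construction $\langle M \ovt \rL\Lambda, e_{M \ovt P}\rangle \cong \rL^2(M \ovt \rL\Lambda) \ot_{M \ovt P} \rL^2(M \ovt \rL\Lambda)$, produces a nonzero $\Delta(Q)$-almost-central family of vectors in the bimodule covered by Part 1. Applying Remark \ref{rem.amenable} to $\Delta(Q) \cong Q$ yields a nonzero central projection $z \in \cZ(Q)$ with $Qz$ amenable, contradicting the hypothesis that $Q$ has no amenable direct summand.

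For Part 3, the argument runs parallel. The anti-(T) chain $\C 1 = P_0 \subset \cdots \subset P_n = P$ of relative Haagerup extensions implies inductively that the $P$-$P$-bimodule $\rL^2(P)$ is weakly contained in a direct sum of bimodules of the form $\rL^2(P) \ot_{P_{i-1}} \rL^2(P)$; pulling back, the $pMp$-$pMp$-bimodule $\cH := p(\rL^2(M) \ot_P \rL^2(M))p$ can be realized as a limit of iterated relative-coarse bimodules compatible with weak containment in the coarse $pMp$-bimodule. Part 1 then gives weak containment of $\rL^2(M) \ot \cH$ in the coarse $M$-$M$-bimodule. Assuming $\Delta(Q) \prec M \ovt P$ and using the Jones construction as in Part 2, we obtain a nonzero $\Delta(Q)$-almost-central vector in a bimodule that is, up to a small perturbation at each step of the Haagerup chain, weakly contained in the coarse; property (T) of $Q$ (diffuse) then forces the central vector to vanish on the coarse piece at each stage, giving $v = 0$ and a contradiction.

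The main obstacle will be Part 1: one must carefully verify the unitary equivalence of the $\Delta(pMp)$-bimodule structure on $\rL^2(pMp \ovt \rL\Lambda)$ with a multiple of the standard $pMp$-bimodule via the $\Lambda$-twist, and then extend cleanly to $\Delta(M)$ through the partial isometries $V_i$ without losing the coarse approximation. The subsequent translations in Parts 2 and 3 from the ``intertwining''-hypothesis on $\Delta(Q)$ to a hypothesis on a $\Delta(M)$-bimodule covered by Part 1 require identifying the relevant Jones-extension $\rL^2$-space as an $\rL^2(M) \ot \cH$-bimodule, which is a routine but finicky computation.
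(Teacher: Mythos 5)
Your proposal's main gap is in Part 1, and since Parts 2 and 3 both rest on Part 1, the gap is critical. You claim that after a twist by the left regular representation of $\Lambda$, the $\Delta(pMp)$-$\Delta(pMp)$-bimodule $\rL^2(pMp\ovt\rL\Lambda)$ becomes unitarily equivalent to a \emph{multiple of the trivial} $pMp$-bimodule $\rL^2(pMp)$. This is false. Take $B=\C1$, so $pMp=\rL\Lambda$. The bimodule is $\ell^2(\Lambda)\ot\ell^2(\Lambda)$ with left and right actions both given by $\Delta(v_s)=v_s\ot v_s$. A multiple of the trivial bimodule must contain a nonzero central vector, i.e.\ a vector in $\ell^2(\Lambda\times\Lambda)$ invariant under the left-right translation action $(s,u)\cdot(t,r)=(stu,sru)$ of $\Lambda\times\Lambda$ on $\Lambda\times\Lambda$; but every orbit of that action is infinite once $\Lambda$ is infinite (the orbit of $(e,e)$ is the diagonal, and the orbit of $(e,r_0)$ contains $\{(u,r_0u):u\in\Lambda\}$), so there is no nonzero $\ell^2$-invariant vector and hence no trivial sub-bimodule, let alone a multiple. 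A good sanity check that something is wrong is that, were your claim true, neither the amenability nor even the type~I structure of $B$ would enter Part~1 at all; yet the amenability of $B$ is precisely what is needed. What the paper actually does is observe that
$\bim{\Delta(\cM)\ot1}{\bigl(\rL^2(\cM)\ot\ell^2(\Lambda)\ot\rL^2(\cM)\bigr)}{\cM\ovt1\ovt\cM}$
is unitarily isomorphic to the Connes relative tensor product $\rL^2(\cM)\ot_B\rL^2(\cM\ovt\cM)$ of $\bim{\cM}{\rL^2(\cM)}{B}$ and $\bim{B\ot1}{\rL^2(\cM\ovt\cM)}{\cM\ovt\cM}$, and then uses amenability of $B$ to conclude that this relative tensor product over $B$ is weakly contained in the plain (coarse) tensor product. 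Your twist conceals this relative-tensor-over-$B$ structure, and the conclusion you draw from it (``multiple of the trivial bimodule'') is simply not what the twist produces; what it produces is a nontrivial representation of $\Lambda$ on one of the tensor legs which must still be controlled.

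Your Parts 2 and 3 are in spirit the paper's argument: build almost-central vectors from the intertwining $\Delta(Q)\prec M\ovt P$ via the basic construction, then invoke Remark~\ref{rem.amenable}. For Part~3 the paper takes a shorter route than you propose: it applies \cite[Lemma~1]{HPV10} inductively along the anti-(T) chain to pass from $\Delta(Q)\prec M\ovt P_n$ to $\Delta(Q)\prec M\ovt 1$, and then simply quotes the amenable case together with the fact that a diffuse property~(T) algebra has no amenable direct summand. Your proposed bimodule-level induction through the Haagerup chain, with property~(T) invoked at each stage, would be substantially more delicate to carry out, and as sketched it is not clear that the ``perturbation'' errors stay under control. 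In any case, fix Part~1 first.
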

\begin{proof}
Put $\cM = B \rtimes \Lambda = pMp$. Denote by $\si : \cM \ovt \cM \recht \cM \ovt \cM$ the flip automorphism. We first claim that the bimodule
$$\bim{\Delta(\cM) \ot 1}{\rL^2(\cM \ovt \cM \ovt \cM)}{(\id \ot \si)(\Delta(\cM) \ot 1)}$$
is weakly contained in the coarse $\cM$-$\cM$-bimodule. To prove this claim, observe that
$$\bim{\Delta(\cM) \ot 1}{\bigl(\rL^2(\cM) \ot \ell^2(\Lambda) \ot \rL^2(\cM)\bigr)}{\cM \ovt 1 \ovt \cM}$$
is unitarily isomorphic with the tensor product $\rL^2(\cM) \ot_B \rL^2(\cM \ovt \cM)$ of the bimodules $\bim{\cM}{\rL^2(\cM)}{B}$ and $\bim{B \ot 1}{\rL^2(\cM \ovt \cM)}{\cM \ovt \cM}$.
Since $B$ is amenable this relative tensor product is weakly contained in the coarse bimodule. Restricting the right $(\cM \ovt \cM)$-module action to $\Delta(\cM)$, the claim follows. We extended $\Delta$ from $\cM$ to $M$. In the bimodule picture this amounts to tensoring on the left by $\rL^2(Mp)$ and on the right by $\rL^2(pM)$. It follows that
$$\bim{\Delta(M) \ot 1}{\rL^2(M \ovt \cM \ovt \cM)}{(\id \ot \si)(\Delta(M) \ot 1)}$$
is weakly contained in the coarse $M$-$M$-bimodule. From this the first statement of the lemma follows immediately.

Assume that $P$ is amenable and that $\Delta(Q) \prec M \ovt P$. We prove that $Q$ has an amenable direct summand.
Since $P$ is amenable, the coarse $P$-$P$-bimodule $\bim{P \ot 1}{\rL^2(P \ovt P)}{1 \ot P}$ contains a sequence of vectors $\xi_n$ satisfying the following properties.
$$\|(a \ot 1) \xi_n - \xi_n(1 \ot a)\|_2 \recht 0 \quad\text{and}\quad \langle (a \ot 1) \xi_n,\xi_n \rangle \recht \tau(a) \quad\text{for all}\;\; a \in P \; .$$
View $\rL^2(P \ovt P) \subset \rL^2(M \ovt M)$ and identify $\rL^2(M \ovt M)$ with the space of Hilbert Schmidt operators on $\rL^2(M)$.
Since $\xi_n \in \rL^2(P \ovt P)$ we have for all $a \in M$ that
\begin{align*}
\langle (a \ot 1)\xi_n,\xi_n \rangle &= (\tau \ot \tau)((a \ot 1) \xi_n \xi_n^*) = (\tau \ot \tau)((E_P(a) \ot 1)\xi_n \xi_n^*) \\
&= \langle (E_P(a) \ot 1)\xi_n,\xi_n \rangle \recht \tau(E_P(a)) = \tau(a) \; .
\end{align*}
So every $\xi_n$ gives rise to a Hilbert Schmidt operator $S_n$ on $\rL^2(M)$ and hence a trace class operator $T_n := S_n S_n^* \in \TC(\rL^2(M))^+$. By construction, $T_n$ has the following properties
$$\|a T_n - T_n a\|_{1,\Tr} \recht 0 \;\;\text{for all}\;\; a \in P \quad\text{and}\quad \Tr(b T_n) \recht \tau(b) \;\;\text{for all}\;\; b \in M \; .$$
Since $\Delta(Q) \prec M \ovt P$, take a nonzero partial isometry $v \in \M_{1,k}(\C) \ovt M \ovt pM$ and a, possibly non-unital, $*$-homomorphism $\theta : Q \recht \M_k(\C) \ovt M \ovt P$ satisfying $\Delta(a) v = v \theta(a)$ for all $a \in Q$. Denote $q := vv^*$ and note that $q \in \Delta(Q)' \cap M \ovt \cM$.

The operator $R_n:=v (1 \ot 1 \ot T_n) v^*$ is a positive element of $M \ovt \B(p \rL^2(M))$ satisfying $(\tau \ot \Tr)(R_n) < \infty$. The square root $\eta_n := R_n^{1/2}$ can be viewed as a vector in $\rL^2(M) \ot p \rL^2(M) \ot \rL^2(M)p$ and satisfies by construction the following properties.
\begin{align*}
& \|(\Delta(a) \ot 1) \eta_n - \eta_n (\id \ot \si)(\Delta(a) \ot 1)\|_2 \recht 0 \;\;\text{for all}\;\; a \in Q \quad\text{and} \\
& \langle (b \ot 1)\eta_n,\eta_n \rangle \recht \tau(bq) \;\;\text{for all}\;\; b \in M \ovt \cM \; .
\end{align*}
Define $z \in \cZ(Q)$ such that $\Delta(z)$ is the support projection of $E_{\Delta(Q)}(q)$. We have shown that the bimodule
\begin{equation}\label{eq.mybimodule}
\bim{\Delta(Q) \ot 1}{\rL^2(M \ovt \cM \ovt \cM)}{(\id \ot \si)(\Delta(Q) \ot 1)}
\end{equation}
weakly contains the trivial $Qz$-bimodule. By the first statement of the lemma the bimodule in \eqref{eq.mybimodule} is weakly contained in the coarse $Q$-$Q$-bimodule. It follows that $Qz$ is an amenable direct summand of $Q$.

Finally assume that $P$ is anti-(T), that $Q$ is diffuse with property (T) and that $\Delta(Q) \prec M \ovt P$. Let $\C 1 = P_0 \subset P_1 \subset \cdots \subset P_n = P$ be a chain of von Neumann subalgebras such that for every $i = 1,\ldots,n$, $P_i$ has property (H) relative to $P_{i-1}$. Repeatedly applying \cite[Lemma 1]{HPV10} it follows that $\Delta(Q) \prec M \ovt P_i$ for every $i$ and hence $\Delta(Q) \prec M \ovt 1$. By the previous statement of the lemma, $Q$ has an amenable direct summand. This is a contradiction with $Q$ being diffuse with property (T).
\end{proof}

\subsection*{Proof of Theorem \ref{thm.main}}

We finally prove Theorem \ref{thm.main}. We now also require that $N = A$ is of type I. Since $M$ is a factor the action $\Gamma \actson \cZ(A)$ is ergodic and since $A$ is of type I it follows that $A \cong \M_n(\C) \ot \cZ(A)$ for some integer $n$.

{\bf\boldmath The case $\Gamma \in \cC$.} Let $H < \Gamma$ be a nonamenable subgroup with the relative property (T). Then, $Q = \rL H$ satisfies the conditions in Theorem \ref{thm.meta}. Condition 1 follows directly from the relative property (T) of $\Delta(Q)$ inside $M \ovt pMp$. Condition 2 follows from Lemma \ref{lem.no-intertwine} and the observation that all the von Neumann algebras $A \rtimes \Sigma$, $\Sigma \in \cS$, are amenable.

{\bf\boldmath The case $\Gamma \in \cD$.} Let $H < \Gamma$ be an infinite subgroup with the plain property (T). Then, $Q = \rL H$ satisfies the conditions in Theorem \ref{thm.meta}. Condition 1 follows directly from the property (T) of $\Delta(Q)$, while condition 2 follows from Lemma \ref{lem.no-intertwine} and the observation that all the von Neumann algebras $A \rtimes \Sigma$, $\Sigma \in \cS$, are anti-(T).

{\bf\boldmath The case $\Gamma \in \cE$.} Let $H < \Gamma$ be a nonamenable subgroup with a nonamenable centralizer $H' < \Gamma$. We claim that $Q = \rL H$ satisfies the conditions of Theorem \ref{thm.meta}. Condition 2 follows from Lemma \ref{lem.no-intertwine} and the observation that all the von Neumann algebras $A \rtimes \Sigma$, $\Sigma \in \cS$, are amenable. We now prove condition 1 using a spectral gap argument.

We are given the orthogonal representation $\pi : \Gamma \recht \cO(H_\R)$ that is weakly contained in the regular representation and the $1$-cocycle $b : \Gamma \recht H_\R$. With these data we build the automorphisms $(\al_t)$ on $\Mtil$ as in paragraph \ref{subsec.deform}. Define $\cH := p \rL^2(\Mtil \ominus M) p$. From Lemma \ref{lem.regular} it follows that $\bim{pMp}{\cH}{pMp}$ is weakly contained in the coarse $pMp$-$pMp$-bimodule. By Lemma \ref{lem.no-intertwine} the bimodule $\bim{\Delta(M)}{(\rL^2(M) \ot \cH)}{\Delta(M)}$ is weakly contained in the coarse $M$-$M$-bimodule. Therefore the unitary representation
$$\gamma : H' \recht \cU(\rL^2(M) \ot \cH) : \gamma(g) \xi := \Delta(u_g) \xi \Delta(u_g^*)$$
is weakly contained in the regular representation. Since $H'$ is nonamenable, $\gamma$ does not weakly contain the trivial $H'$-representation.

Choose $\eps > 0$. Take $g_1,\ldots,g_n \in H'$ and $\rho > 0$ such that every vector $\xi \in \rL^2(M) \ot \cH$ satisfying $\|\xi - \gamma(g_k)\xi\| \leq \rho$ for all $k = 1,\ldots,n$, must be of norm at most $\eps$, i.e.\ satisfies $\|\xi\| \leq \eps$. Put $\delta = \min\{\rho/12,\eps\}$. Take $t > 0$ small enough such that
$$\|\al_t(p) - p\|_2 \leq \delta \quad\text{and}\quad \|(\id \ot \al_t)\Delta(u_{g_k}) - \Delta(u_{g_k})\|_2 \leq \delta \quad\text{for all}\;\; k=1,\ldots,n \; .$$
We claim that $\|(\id \ot \al_t)\Delta(b) - \Delta(b)\|_2 \leq 5 \sqrt{2} \eps$ for all $b$ in the unit ball of $Q$. Once this claim is proven, also condition 1 of Theorem \ref{thm.meta} has been verified and the theorem follows from Theorem \ref{thm.meta}.

Fix $b \in (Q)_1$. Write $\xi = (\id \ot \al_t)\Delta(b)$, $\xi' = (1 \ot p)\xi(1 \ot p)$ and $\xi^{\prime\prime} = \xi' - (\id \ot E_M)(\xi')$. Note that $\xi^{\prime\prime} \in \rL^2(M) \ot \cH$. Observe that $\|\xi-\xi'\|_2 \leq 2\delta$. Since $b$ commutes with $u_{g_k}$, we get that $\|\xi - \Delta(u_{g_k}) \xi \Delta(u_{g_k}^*)\|_2 \leq 2\delta$ for all $k = 1,\ldots,n$. Hence, $\|\xi' - \Delta(u_{g_k}) \xi' \Delta(u_{g_k}^*)\|_2 \leq 6\delta$ and so $\|\xi^{\prime\prime} - \gamma(g_k) \xi^{\prime\prime}\|_2 \leq 12\delta\leq\rho$. We conclude that $\|\xi^{\prime\prime}\|_2 \leq \eps$. Hence, $\|\xi - (\id \ot E_M)(\xi)\|_2 \leq \eps + 4\delta\leq 5\eps$. This means that $\|(\id \ot \delta_t)\Delta(b)\|_2 \leq 5\eps$. It follows from Lemma \ref{lem.equivalent} that $\|\Delta(b) - (\id \ot \al_t)\Delta(b)\|_2 \leq 5 \sqrt{2}\eps$, proving the claim above.

{\bf\boldmath The case $\Gamma \in \cC_2$.} We have $\Gamma = \Gamma_1 \times \Gamma_2$ with $\Gamma_i \in \cC$. So we are given nonamenable subgroups $H_i < \Gamma_i$ with the relative property (T) and families $\cS_i$ of subgroups of $\Gamma_i$. We can view $M$ as the crossed product $M = (A \rtimes \Gamma_1) \rtimes \Gamma_2$ or as the crossed product $(A \rtimes \Gamma_2)\rtimes \Gamma_1$. This gives rise to the malleable deformations $(\al^1_t)$ and $(\al^2_t)$ associated with the unbounded cocycles $b_i : \Gamma_i \recht H^i_\R$ into the orthogonal representations $\pi^i : \Gamma_i \recht \cO(H^i_\R)$.

Denote $Q = \rL(H_1 \times H_2)$. We prove below the existence of
a nonzero projection $q \in pMp \cap (\rL \Lambda)'$ such that for all $\Sigma_2 \in \cS_2$, we have
\begin{equation}\label{eq.versionclaim}
\Delta(Q)(1 \ot q) \not\prec pMp \ovt (A \rtimes (\Gamma_1 \times \Sigma_2))
\end{equation}
Since $\Delta(Q) \subset M \ovt pMp$ has the relative property (T), \eqref{eq.versionclaim} and Theorem \ref{thm.meta} imply that $B \almost (A \rtimes \Gamma_1) \rtimes \cS_2$. By symmetry also $B \almost (A \rtimes \Gamma_2) \rtimes \cS_1$. By Lemma \ref{lem.intersection} it follows that there exist $\Sigma_i \in \cS_i$ such that $B \prec A \rtimes (\Sigma_1 \times \Sigma_2)$, ending the proof of the case $\Gamma \in \cC_2$. It remains to settle \eqref{eq.versionclaim}.

Denote by $\ball N$ the unit ball of a von Neumann algebra $N$. Let $p_1 \in \Delta(\rL(H_1 \times H_2))' \cap M \ovt pMp$ be as in Proposition \ref{prop.maxintertwine} the maximal projection such that $$\ball\bigl(\Delta(\rL(H_1 \times H_2))p_1) \almost M \ovt (A \rtimes (\Gamma_1 \times \cS_2)) \; .$$ Define $f_i$ as the smallest projection in $\Delta(\rL(\Gamma_i))' \cap M \ovt pMp$ that satisfies $p_1 \leq f_i$. Define $p_2$ as the smallest projection in $\Delta(\rL \Gamma)' \cap M \ovt pMp$ that satisfies $p_1 \leq p_2$. Note that $f_i \leq p_2$ for both $i=1,2$.

Since $\ball\bigl(\Delta(\rL(H_2)) p_1) \almost M \ovt (A \rtimes (\Gamma_1 \times \cS_2))$ and since $H_2$ is nonamenable, Lemmas \ref{lem.intersection} and \ref{lem.no-intertwine} imply that $\Delta(\rL(H_2))p_1 \not\prec M \ovt (A \rtimes (\cS_1 \times \Gamma_2))$. Since $H_2 < \Gamma_2$ has the relative property (T), we know that $\id \ot \al^1_t \recht \id$ in \two uniformly on the unit ball of $\Delta(\rL(H_2))p_1$. Both statements, together with Theorem \ref{thm.uniform-normalizer} and the observation that $\rL(\Gamma_1)$ commutes with $\rL(H_2)$, imply that $\id \ot \al^1_t \recht \id$ in \two uniformly on the unit ball of $\Delta(\rL(\Gamma_1))f_1$. Since the normalizer of $\rL(\Gamma_1)$ contains $\rL(\Gamma)$, Lemma \ref{lem.maxuniform-general} implies that $\id \ot \al^1_t \recht \id$ in \two uniformly on the unit ball of $\Delta(\rL(\Gamma_1))p_2$. Using $H_1$ instead of $H_2$, we also find that $\id \ot \al^1_t \recht \id$ in \two uniformly on the unit ball of $\Delta(\rL(\Gamma_2))p_2$. The unitaries $\Delta(u_{(g,h)}) = \Delta(u_{(g,e)}) \Delta(u_{(e,h)})$ form a group generating $\Delta(\rL \Gamma)$. It follows from Proposition \ref{prop.extenduniform} that $\id \ot \al^1_t \recht \id$ in \two uniformly on the unit ball of $\Delta(\rL \Gamma) p_2$.

Define $p_3$ as the smallest projection in $\Delta(M)' \cap M \ovt pMp$ that satisfies $p_1 \leq p_3$. Note that $p_2 \leq p_3$. We observed above that
$\Delta(\rL(H_2))p_1 \not\prec M \ovt (A \rtimes (\cS_1 \times \Gamma_2))$. The relative property (T) of $\Delta(\rL(H_2))$ in $M \ovt pMp$ implies that
 $\id \ot \al^1_t \recht \id$ in \two uniformly on the unit ball of $\Delta(\rL(H_2))$. By Lemma \ref{lem.intertwining} take a net of elements $g_i \in H_2$ such that $\|(1 \ot P_{\cF \times \Gamma_2})(\Delta(u_{g_i})p_1)\|_2 \recht 0$ for every subset $\cF \subset \Gamma_1$ that is small relative to $\cS_1$. Since the unitaries $\Delta(u_{g_i})$ normalize the abelian von Neumann algebra $\Delta(\cZ(A))$ and since $\cZ(A) \subset M$ is regular, it follows from Theorem \ref{thm.uniform-abelian} that $\id \ot \al^1_t \recht \id$ in \two uniformly on the unit ball of $\Delta(\cZ(A))p_3$ and hence also on the unit ball of $\Delta(A) p_3$. The group of unitaries $\Delta(au_g)$, $a \in \cU(A)$, $g \in \Gamma$, generates $\Delta(M)$. So, the uniform convergence $\id \ot \al^1_t \recht \id$ on the unit balls of $\Delta(\rL \Gamma) p_2$ and $\Delta(A) p_3$, together with Proposition \ref{prop.extenduniform}, implies that $\id \ot \al^1_t \recht \id$ in \two uniformly on the unit ball of $\Delta(M)p_3$.

Recall how $\Delta$ was extended to $M$ starting from the comultiplication on $pMp$. Put $S_i := V_i V_i^*$ and $P_i := V_i^* V_i \in B$. By construction $\Delta(S_i) = S_i \ot 1$ and hence $p_3$ commutes with all the projections $S_i \ot 1$. Define $p_i := (V_i^* \ot 1) p_3 (V_i \ot 1)$. For all $b \in \cU(B)$ and $s,r \in \Lambda$, we have
\begin{equation}\label{eq.tussenstap}
p_i (b v_s \ot v_r) = (V_i^* \ot 1) p_3 \Delta(V_i b v_r) (v_{r^{-1}s} \ot 1) \; .
\end{equation}
We know that $\id \ot \al^1_t \recht \id$ in \two uniformly on the unit ball of $p_3\Delta(M)$. So, $\id \ot \al^1_t \recht \id$ in \two uniformly on the elements $\Delta(V_i b v_r)$, $b \in \cU(B)$, $r \in \Lambda$. Formula \eqref{eq.tussenstap} then implies that $\id \ot \al^1_t \recht \id$ in \two uniformly on the elements $p_i (b v_s \ot v_r)$, $b \in \cU(B)$, $s,r \in \Lambda$.
As in Lemma \ref{lem.maxuniform-general} let $q_1 \in (\rL \Lambda)' \cap pMp$ be the maximal projection such that $\al^1_t \recht \id$ in \two uniformly on the unit ball of $\rL(\Lambda) q_1$.
Since the unitaries $b v_s \ot v_r$ form a group generating $pMp \ovt \rL \Lambda$ and since $\id \ot \al^1_t \recht \id$ in \two uniformly on the elements $p_i (b v_s \ot v_r)$, it follows from Proposition \ref{prop.extenduniform} that $p_i \leq p \ot q_1$ for every $i = 1,\ldots,k$. So $p_3 \leq 1 \ot q_1$ and in particular $p_1 \leq 1 \ot q_1$. If $q_1 < p$, we can put $q = p-q_1$ and \eqref{eq.versionclaim} is proven.

As a final step, we assume that $q_1 = p$ and derive a contradiction. So, $\al^1_t \recht \id$ in \two uniformly on the unit ball of $\rL(\Lambda)$. We observed above that
$\Delta(\rL(H_2))p_1 \not\prec M \ovt (A \rtimes (\cS_1 \times \Gamma_2))$. Since $\Delta(M) \subset M \ovt \rL \Lambda$, it is then impossible that
$$\ball\bigl(M \ovt \rL(\Lambda)) \almost M \ovt (A \rtimes (\cS_1 \times \Gamma_2)) \; .$$
By Lemma \ref{lem.intertwining} and Proposition \ref{prop.maxintertwine} we find a nonzero projection $e \in (\rL \Lambda)' \cap pMp$ and a net of elements $(s_j)$ in $\Lambda$ such that $\|P_{\cF \times \Gamma_2}(v_{s_j} e)\|_2 \recht 0$ for every subset $\cF \subset \Gamma_1$ that is small relative to $\cS_1$. Since the unitaries $v_{s_j}$ normalize the regular abelian von Neumann subalgebra $\cZ(B) \subset pMp$, it follows from Theorem \ref{thm.uniform-abelian} that $\al^1_t \recht \id$ in \two uniformly on the unit ball of $\cZ(B)$. Together with the uniform convergence on the unit ball of $\rL(\Lambda)$ and Proposition \ref{prop.extenduniform}, we obtain the uniform convergence on the unit ball of $pMp$. This is absurd because the cocycle $b_1$ is unbounded.

{\bf\boldmath The case $\Gamma \in \cD_2$.} The proof is identical to the proof of the case $\Gamma \in \cC_2$, but now using the last statement of Lemma \ref{lem.no-intertwine}.\hfill\qedsymbol

\subsection*{Proof of Theorem \ref{thm.uniquecartan}}

Both amalgamated free products and HNN extensions admit a natural action on their Bass-Serre tree, yielding $1$-cocycles into orthogonal representations. Very concretely, if $\Gamma = \Gamma_1 *_\Sigma \Gamma_2$, define the orthogonal representation $\pi : \Gamma \recht \cO(\ell^2_\R(\Gamma/\Sigma))$ given by left translation. Clearly $\pi$ is mixing relative to the subgroup $\Sigma$. One checks that there is a unique $1$-cocycle $b : \Gamma \recht H_\R$ satisfying $b(g) = 0$ for all $g \in \Gamma_1$ and $b(h) = \delta_\Sigma - \delta_{h\Sigma}$ for all $h \in \Gamma_2$. This $1$-cocycle is unbounded and vanishes on $\Sigma$.

When $\Gamma = \HNN(H,\Sigma,\theta)$ is the HNN extension generated by $H$ and $t$ subject to the relations $t \sigma t^{-1} = \theta(\sigma)$ for all $\sigma \in \Sigma$, define the orthogonal representation $\pi : \Gamma \recht \cO(\ell^2_\R(\Gamma/\Sigma))$ given by left translation. Again $\pi$ is mixing relative to $\Sigma$ and there is a unique $1$-cocycle $b : \Gamma \recht H_\R$ satisfying $b(h) = 0$ for all $h \in H$ and $b(t) = \delta_{t \Sigma}$. Also this $1$-cocycle is unbounded and vanishes on $\Sigma$.

So all groups $\Gamma$ appearing in Theorem \ref{thm.uniquecartan} belong to $\cC \cup \cD \cup \cE \cup \cC_2 \cup \cD_2$.

Use the notations as in the formulation of Theorem \ref{thm.uniquecartan}. Applying Theorem \ref{thm.main} to $A := \M_n(\C) \ot \rL^\infty(X)$, we conclude that there exists a $\Sigma \in \cS$ such that $\rL^\infty(Y) \prec A \rtimes \Sigma$ and hence $\rL^\infty(Y) \prec \rL^\infty(X) \rtimes \Sigma$.

Take any projection $q \in \D_n(\C) \ot \rL^\infty(X)$ having the same trace as $p$.
\begin{somop}
\item If $\Sigma = \{e\}$, the unitary conjugacy of $\rL^\infty(Y)$ and $(\D_n(\C) \ot \rL^\infty(X))q$ follows from \cite[Theorem A.1]{Po01}. This settles item 2 of the theorem.
\item When $\Gamma$ is a nontrivial amalgamated free product or an HNN extension and if $\Sigma$ is weakly malnormal, \cite[Proposition 8]{HPV10} provides a finite group $\Sigma_0$ such that $\rL^\infty(Y) \prec \rL^\infty(X) \rtimes \Sigma_0$. Then also $\rL^\infty(Y) \prec \rL^\infty(X)$ and the conclusion follows again from \cite[Theorem A.1]{Po01}. This settles items 3, 4 and 5 of the theorem.
\item If $\Sigma$ is relatively malnormal, take a subgroup $\Sigma < \Lambda < \Gamma$ such that $\Lambda < \Gamma$ has infinite index and $g\Sigma g^{-1} \cap \Sigma$ is finite for all $g \in \Gamma-\Lambda$. We apply Lemma \ref{lem.malnormal} below. Since the normalizer of $\rL^\infty(Y)$ is the whole of $p(A \rtimes \Gamma)p$ and since $\Lambda < \Gamma$ has infinite index, we conclude that $\rL^\infty(Y) \prec A$ and hence $\rL^\infty(Y) \prec \rL^\infty(X)$. We again find the unitary conjugacy of $\rL^\infty(Y)$ and $(\D_n(\C) \ot \rL^\infty(X))q$ from \cite[Theorem A.1]{Po01}. This settles the remaining item 1 of the theorem.
\end{somop}
So the proof of Theorem \ref{thm.uniquecartan} is complete.\hfill\qedsymbol

Our last lemma is implicitly contained in \cite[Lemma 4.2]{Va07}, but we provide an explicit proof for the convenience of the reader.

\begin{lemma}\label{lem.malnormal}
Let $\Gamma \actson (A,\tau)$ be any trace preserving action of a countable group. Assume that $\Sigma < \Lambda < \Gamma$ are subgroups such that $g \Sigma g^{-1} \cap \Sigma$ is finite for all $g \in \Gamma - \Lambda$. Put $M = A \rtimes \Gamma$. Let $p \in M$ be a projection and $B \subset pMp$ a von Neumann subalgebra. Denote by $Q$ the normalizer of $B$ inside $pMp$.

If $B \prec A \rtimes \Sigma$ and $B \not\prec A$, then $Q \prec A \rtimes \Lambda$.
\end{lemma}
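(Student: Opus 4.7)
The plan is to reduce to the situation of a ``full'' intertwining partial isometry $v$ and then apply a bimodule decomposition argument on $\rL^2(M) \ominus \rL^2(A \rtimes \Lambda)$. First I would perform the following reductions. By Proposition~\ref{prop.maxintertwine} applied with $\cS = \{\Sigma\}$, there is a maximal projection $q_0 \in \cZ(Q)$ such that $(Bq_0)_1 \almost A \rtimes \Sigma$; this $q_0$ is nonzero since $B \prec A \rtimes \Sigma$. Any element of $Q$ commutes with $q_0$, and one checks that an intertwiner witnessing $\cN_{q_0 M q_0}(Bq_0) \prec A \rtimes \Lambda$ lifts to one witnessing $Q \prec A \rtimes \Lambda$. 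So we may replace $p$ by $q_0$ and $B$ by $Bq_0$ to assume $(B)_1 \almost A \rtimes \Sigma$. Combining Lemma~\ref{lem.approx-vs-prec} with a Zorn-type enlargement, we can then find $n \in \N$, a projection $p_1 \in \M_n(\C) \ot (A \rtimes \Sigma)$, a partial isometry $v \in \M_{1,n}(\C) \ot pM$ with $v^* v = p_1$ and $vv^* = p$, and a $*$-homomorphism $\theta : B \recht p_1(\M_n(\C) \ot (A \rtimes \Sigma))p_1$ satisfying $bv = v\theta(b)$ for all $b \in B$. Using $B \not\prec A$, a further maximality step allows us to arrange that $\theta(B) \not\prec \M_n(\C) \ot A$ inside $\M_n(\C) \ot (A \rtimes \Sigma)$.

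The heart of the argument is the following claim: for every $u \in Q$, we have $v^* u v \in \M_n(\C) \ot (A \rtimes \Lambda)$. Granting the claim, define $\rho : Q \recht p_1(\M_n(\C) \ot (A \rtimes \Lambda))p_1$ by $\rho(u) := v^*uv$. Using $vv^* = p$ and $Q \subset pMp$, one checks directly that $\rho$ is a unital $*$-homomorphism and that $uv = v \rho(u)$ for all $u \in Q$; by Theorem~\ref{thm.intertwining} this gives $Q \prec A \rtimes \Lambda$, completing the proof.

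To prove the claim, fix $u \in Q$ and set $E := \id \ot E_{A \rtimes \Lambda}$ and $\xi := v^* u v - E(v^* u v)$, so that $\xi \in \M_n(\C) \ot (\rL^2(M) \ominus \rL^2(A \rtimes \Lambda))$. The computation $\theta(b) v^* u v = v^* b u v = v^* u (u^* b u) v = v^* u v \, \theta(u^* b u)$ together with $\theta(b), \theta(u^*bu) \in A \rtimes \Sigma \subset A \rtimes \Lambda$ shows that $E$ commutes with left multiplication by $\theta(b)$ and right multiplication by $\theta(u^*bu)$, so $\theta(b) \xi = \xi \theta(u^*bu)$ for all $b \in B$. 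Now decompose $\rL^2(M) \ominus \rL^2(A \rtimes \Lambda)$ as an $(A \rtimes \Sigma)$-$(A \rtimes \Sigma)$-bimodule into the double-coset summands $\cK_{[g]} := \overline{\lspan}\{a u_h \mid a \in A,\, h \in \Sigma g \Sigma\}$, where $[g]$ runs through $\Sigma\backslash\Gamma/\Sigma$ with representative $g \notin \Lambda$. Since both $\theta(b)$ (on the left) and $\theta(u^*bu)$ (on the right) lie in $A \rtimes \Sigma$, projection onto each summand preserves the intertwining identity, reducing us to showing that no nonzero intertwiner lives in any single $\M_n(\C) \ot \cK_{[g]}$. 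A standard identification presents $\cK_{[g]}$, up to a twist by $\Ad u_g$, as $\rL^2(A \rtimes \Sigma) \otimes_{A \rtimes F_g} \rL^2(A \rtimes \Sigma)$, where $F_g := g^{-1}\Sigma g \cap \Sigma$ is finite by the relative malnormality hypothesis applied to $g^{-1} \notin \Lambda$. A nonzero $\theta(B)$-intertwiner in such a tensor-product bimodule forces an embedding $\theta(B) \prec \M_n(\C) \ot (A \rtimes F_g)$; since $F_g$ is finite this gives $\theta(B) \prec \M_n(\C) \ot A$, contradicting our arrangement. Hence $\xi = 0$ and the claim is proved. The main obstacle is precisely this bimodule identification step: while classical (stemming from Popa's intertwining machinery and the theory of Jones-type basic constructions), extracting the $\prec$-embedding in the presence of both the $\Ad u_g$ twist and the matrix amplification requires careful bookkeeping.
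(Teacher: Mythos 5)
Your proposal is essentially correct and shares the core idea with the paper: produce an intertwiner $v$ with $\theta:B\recht \M_n(\C)\ot(A\rtimes\Sigma)$ arranged so that $\theta(B)\not\prec\M_n(\C)\ot A$, then show that $v^*uv$ has no component orthogonal to $\M_n(\C)\ot(A\rtimes\Lambda)$. The two arguments diverge in how this vanishing is established. The paper does \emph{not} reduce to a ``full'' $v$ with $vv^*=p$; it keeps a general $v$ and, from $\theta(B)\not\prec_{N\rtimes\Sigma}N$, extracts unitaries $b_n\in\cU(B)$ with $\|P_\cF(\theta(b_n))\|_2\to0$ for every finite $\cF\subset\Sigma$, then uses the finiteness of $\Sigma\cap g^{-1}\Sigma h^{-1}$ ($g,h\notin\Lambda$) to prove a mixing-type estimate $\|E_{N\rtimes\Sigma}(x\,\theta(b_n)\,y)\|_2\to0$ for $x,y\perp N\rtimes\Lambda$. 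Plugging $x=v^*dv-E_{N\rtimes\Lambda}(v^*dv)$ and using the identity $x\,\theta(b_n)\,x^*=\theta(db_nd^*)\,xx^*$, one takes $E_{N\rtimes\Sigma}$ and concludes $\|E_{N\rtimes\Sigma}(xx^*)\|_2\to0$, hence $x=0$. Your route instead decomposes $\rL^2(M)\ominus\rL^2(A\rtimes\Lambda)$ into $(A\rtimes\Sigma)$-$(A\rtimes\Sigma)$ double-coset bimodules $\cK_{[g]}$ and derives $\theta(B)\prec N\rtimes F_g$ from a nonzero intertwiner — morally the same exploitation of the finiteness hypothesis, packaged via basic-construction language rather than a sequence argument. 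What your version buys is a conceptually transparent picture of exactly why relative malnormality enters; what you lose is a couple of nontrivial technical points that the paper handles more economically. In particular, your two preliminary ``maximality'' reductions — getting $vv^*=p$ \emph{and} $\theta(B)\not\prec\M_n(\C)\ot A$ simultaneously — are in tension (stacking more intertwiners to fill $p$ may reintroduce components that embed into $A$); the paper sidesteps the first entirely and uses a specific reference (\cite[Remark 3.8]{Va07}) for the second. Also, the step extracting $\theta(B)\prec\M_n(\C)\ot(A\rtimes F_g)$ from a nonzero (possibly unbounded) intertwining vector in the twisted tensor-product bimodule, with the matrix amplification present, is stated but not actually carried out, as you yourself acknowledge; in the paper this is replaced by a clean two-line computation.
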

\begin{proof}
Take a nonzero partial isometry $v \in \M_{1,n}(\C) \ot pM$ and a, possibly non-unital, normal $*$-homomorphism $\theta : B \recht \M_n(\C) \ot (A \rtimes \Sigma)$ satisfying $b v = v \theta(b)$ for all $b \in B$. Put $q = \theta(p)$. Write $N = \M_n(\C) \ot A$. By \cite[Remark 3.8]{Va07} we may assume that
\begin{equation}\label{eq.nonembedding}
\theta(B) \not\prec_{N \rtimes \Sigma} N \; .
\end{equation}
Whenever $\cF \subset \Gamma$, denote by $P_\cF$ the orthogonal projection onto the closed linear span of $\{a u_g \mid a \in N, g \in \cF\}$. Because of \eqref{eq.nonembedding} we can take a sequence of unitaries $b_n \in \cU(B)$ such that $\|P_\cF(\theta(b_n))\|_2 \recht 0$ for every finite subset $\cF \subset \Sigma$.

We claim that $\|E_{N \rtimes \Sigma}(x \theta(b_n) y)\|_2 \recht 0$ whenever $x,y \in (N \rtimes \Gamma) \ominus (N \rtimes \Lambda)$. Since we can approximate $x$ and $y$ by linear combinations of $a u_g$, $a \in N$ and $g \in \Gamma - \Lambda$, it suffices to prove the claim when $x = u_g$, $y = u_h$ for some $g,h \in \Gamma - \Lambda$. But then $$E_{N \rtimes \Sigma}(u_g \theta(b_n) u_h) = u_g P_{\Sigma \cap g^{-1}\Sigma h^{-1}}(\theta(b_n)) u_h \; .$$
The claim follows from the fact that $\Sigma \cap g^{-1}\Sigma h^{-1}$ is finite.

We prove that $v^* Q v \subset N \rtimes \Lambda$, so that in particular, $Q \prec A \rtimes \Lambda$. Take $d \in \cN_{pMp}(B)$. We have to prove that $v^* d v \in N \rtimes \Lambda$. Write $x = v^* d v - E_{N \rtimes \Lambda}(v^* d v)$. We have to prove that $x=0$. By construction, $x \theta(b_n) x^* = \theta(db_n d^*) xx^*$. Hence,
$$\|E_{N \rtimes \Sigma}(xx^*)\|_2 = \|\theta(db_n d^*) E_{N \rtimes \Sigma}(xx^*)\|_2 = \|E_{N \rtimes \Sigma}(x \theta(b_n) x^*)\|_2 \recht 0$$
by the claim in the previous paragraph. So, $x = 0$ and the lemma is proven.
\end{proof}

\subsection*{Proof of Theorem \ref{thm.no-group-measure-space}}

Assume that $B \subset (\rL^\infty(X) \rtimes \Gamma)^t$ is a group measure space Cartan subalgebra. Literally repeating the proof of Theorem \ref{thm.uniquecartan} it follows that $B \prec \rL^\infty(X)$. By \cite[Lemma 4.11]{OP07} the action $\Gamma \actson (X,\mu)$ is essentially free, contradicting the assumptions of Theorem \ref{thm.no-group-measure-space}.\hfill\qedsymbol


\begin{thebibliography}{ABC90}\setlength{\itemsep}{-1mm} \setlength{\parsep}{0mm} \small

\bibitem[BO08]{BO08} {N. P. Brown and N. Ozawa},
C$^*$-algebras and finite-dimensional approximations.
{\it Graduate Studies in Mathematics} {\bf 88}. American Mathematical Society, Providence, 2008.

\bibitem[CP10]{CP10} {I. Chifan and J. Peterson}, Some unique group-measure space decomposition results. {\it Preprint.}
{\tt arXiv:1010.5194}

\bibitem[CFW81]{CFW81} {A. Connes, J. Feldman and B. Weiss}, An amenable equivalence relation is generated
by a single transformation. {\it Ergodic Theory Dynam. Systems} {\bf 1} (1981), 431-450.

\bibitem[CTV06]{CTV06} {Y. de Cornulier, R. Tessera and A. Valette}, Isometric group actions on Banach spaces and representations
vanishing at infinity. {\it Transform. Groups} {\bf 13} (2008), 125-147.

\bibitem[CSV07]{CSV07} {Y. de Cornulier, Y. Stalder and A. Valette}, Proper actions of lamplighter groups associated with free groups. {\it C.R. Acad. Sci. Paris, Ser. I} {\bf 346} (2008), 173-176.

\bibitem[FV10]{FV10} {P. Fima and S. Vaes}, HNN extensions and unique group measure space decomposition of II$_1$ factors. {\it Trans. Amer. Math. Soc.}, to appear.
{\tt arXiv:1005.5002}

\bibitem[HPV10]{HPV10} {C. Houdayer, S. Popa and S. Vaes}, A class of groups for which every action is W$^*$-superrigid. {\it Groups Geom. Dyn.}, to appear.
{\tt arXiv:1010.5077}

\bibitem[IPV10]{IPV10} {A. Ioana, S. Popa and S. Vaes}, A class of superrigid group von Neumann algebras. {\it Preprint.} {\tt arXiv:1007.1412}

\bibitem[Ki09]{Ki09} {Y. Kida},
Rigidity of amalgamated free products in measure equivalence theory. To appear in {\it J. Topol.} {\tt arXiv:0902.2888}

\bibitem[Ki10]{Ki10} {Y. Kida},
Examples of amalgamated free products and coupling rigidity. {\it Ergodic Theory Dynam. Systems}, to appear. {\tt  arXiv:1007.1529}

\bibitem[MS02]{MS02} {N. Monod and Y. Shalom}, Orbit equivalence
  rigidity and bounded cohomology. {\it Ann. Math.} {\bf 164} (2006), 825-878.

\bibitem[Oz03]{Oz03} {N. Ozawa}, Solid von Neumann algebras.
{\it Acta Math.} {\bf 192} (2004), 111-117.

\bibitem[Oz04]{Oz04} {N. Ozawa}, A Kurosh-type theorem for type II$_1$ factors.
{\it Int. Math. Res. Not.} {\bf 2006}, Art. ID 97560.

\bibitem[OP07]{OP07} {N. Ozawa and S. Popa}, On a class of II$_1$ factors with at
most one Cartan subalgebra. {\it Ann. Math.} {\bf 172} (2010), 713-749.

\bibitem[OP08]{OP08} {N. Ozawa and S. Popa}, On a class of II$_1$ factors
with at most one Cartan subalgebra, II. {\it Amer. J. Math.} {\bf 132} (2010), 841-866.

\bibitem[Pe06]{Pe06} {J. Peterson}, $L^2$-rigidity in von Neumann algebras. {\it Invent. Math.} {\bf 175} (2009), 417-433.

\bibitem[Pe09]{Pe09} {J. Peterson}, Examples of group actions which are virtually W$^*$-superrigid. {\it Preprint.}\\ {\tt arXiv:1002.1745}

\bibitem[Po01]{Po01} {S. Popa}, On a class of type II$_1$ factors with Betti numbers invariants.
\emph{Ann. of Math.} \textbf{163} (2006), 809-899.

\bibitem[Po03]{Po03} {S. Popa}, Strong rigidity of II$_1$ factors arising from malleable actions of $w$-rigid groups, I. \emph{Invent. Math.} \textbf{165} (2006), 369-408.

\bibitem[Po05]{Po05} {S. Popa}, Cocycle and orbit equivalence superrigidity
for malleable actions of $w$-rigid groups. {\it Invent. Math.} {\bf 170} (2007), 243-295.

\bibitem[Po06a]{Po06a} {S. Popa}, Deformation and rigidity for group actions and von Neumann algebras. In {\it Proceedings of the International Congress of Mathematicians (Madrid, 2006)}, Vol.\ I, European Mathematical Society Publishing House, 2007, p.\ 445-477.

\bibitem[Po06b]{Po06b} {S. Popa}, On Ozawa's Property for Free Group Factors. {\it Int. Math. Res. Not.} {\bf 2007}, Article ID rnm036.

\bibitem[Po06c]{Po06c} {\sc S. Popa}, On the superrigidity of malleable actions with spectral gap.
{\it J. Amer. Math. Soc.} {\bf 21} (2008), 981-1000.

\bibitem[PV09]{PV09} {S. Popa and S. Vaes}, Group measure space decomposition of II$_1$ factors and W$^*$-superrigidity. {\it Invent. Math.} {\bf 182} (2010), 371-417.

\bibitem[Si10]{Si10} {T. Sinclair}, Strong solidity of group factors from lattices in $\SO(n,1)$ and $\SU(n,1)$. {\it J. Funct. Anal.} {\bf 260} (2011), 3209-3221. 

\bibitem[Va07]{Va07} {S. Vaes},
Explicit computations of all finite index bimodules for a family of II$_1$ factors.
\emph{Ann. Sci. \'{E}cole Norm. Sup.} {\bf 41} (2008), 743-788.

\bibitem[Va10]{Va10} {S. Vaes},
Rigidity for von Neumann algebras and their invariants.
In {\it Proceedings of the International Congress of Mathematicians (Hyderabad, 2010)}, Vol.\ III, Hindustan Book Agency, 2010, p.\ 1624-1650.

\end{thebibliography}
\end{document}